\theoremstyle{definition}
\newtheorem{thm}{Theorem}[section]
\newtheorem{dfn}[thm]{Definition}
\newtheorem{lem}[thm]{Lemma}
\newtheorem{prp}[thm]{Proposition}
\newtheorem{cor}[thm]{Corollary}
\newtheorem{rmk}[thm]{Remark}
\newtheorem{rmks}[thm]{Remarks}
\newtheorem{prb}{Problem}
\newtheorem*{thm*}{Theorem}
\newtheorem*{cor*}{Corollary}
\newtheorem*{prp*}{Proposition}
\newtheorem*{rmk*}{Remark}
\newtheorem*{ntt}{Notation}
\newcommand{\inn}{\in\mathbb{N}}
\newcommand{\e}{\varepsilon}
\newcommand{\al}{\alpha}
\newcommand{\de}{\delta}
\newcommand{\la}{\lambda}
\newcommand{\be}{\beta}
\newcommand{\adx}{\al\big(\{x_k\}_k\big)}
\newcommand{\bdx}{\be\big(\{x_k\}_k\big)}
\newcommand{\ady}{\al\big(\{y_k\}_k\big)}
\newcommand{\bdy}{\be\big(\{y_k\}_k\big)}
\newcommand{\adz}{\al\big(\{z_k\}_k\big)}
\newcommand{\bdz}{\be\big(\{z_k\}_k\big)}
\newcommand{\adw}{\al\big(\{w_k\}_k\big)}
\newcommand{\bdw}{\be\big(\{w_k\}_k\big)}
\newcommand{\Sn}{\mathcal{S}_{n}}
\newcommand{\Sj}{\mathcal{S}_j}
\newcommand{\X}{\mathfrak{X}_{_{^\text{usm}}}}
\newcommand{\plusminus}{\substack{+\\[-2pt]-}}
\newcommand{\bcrossx}{b\otimes\{x_k\}_k}
\newcommand{\Bcrossx}{\mathcal{B}\otimes\{x_k\}_k}
\newcommand{\Bcrossy}{\mathcal{B}\otimes\{y_k\}_k}
\newcommand{\Bcrossz}{\mathcal{B}\otimes\{z_k\}_k}
\DeclareMathOperator{\supp}{supp}
\DeclareMathOperator{\ran}{ran}
\DeclareMathOperator{\sgn}{sgn}
\long\def\symbolfootnote[#1]#2{\begingroup%
\def\thefootnote{\fnsymbol{footnote}}\footnote[#1]{#2}\endgroup}
\begin{document}

\title[Rich
spreading model structure]{A hereditarily indecomposable Banach
space with rich spreading model structure}
\dedicatory{Dedicated to the memory of Joram Lindenstrauss}

\author[S.A. Argyros, P. Motakis]{Spiros A. Argyros, Pavlos
Motakis}
\address{National Technical University of Athens, Faculty of Applied Sciences,
Department of Mathematics, Zografou Campus, 157 80, Athens,
Greece} \email{sargyros@math.ntua.gr, pmotakis@central.ntua.gr}

\symbolfootnote[0]{\textit{2010 Mathematics Subject
Classification:} Primary 46B03, 46B06, 46B25, 46B45}

\symbolfootnote[0]{\textit{Key words:} Spreading models, Strictly
singular operators, Reflexive spaces, Hereditarily indecomposable
spaces}

\symbolfootnote[0]{Research supported by API$\Sigma$TEIA program.}

\maketitle
\begin{abstract}
We present a reflexive Banach space $\X$ which is Hereditarily
Indecomposable and satisfies  the following properties. In every
subspace $Y$ of $\X$ there exists a weakly null normalized
sequence $\{y_n\}_n$, such that every subsymmetric sequence
$\{z_n\}_n$ is isomorphically generated as a spreading model of a
subsequence of $\{y_n\}_n$. Also, in every block subspace $Y$ of
$\X$ there exists a seminormalized block sequence $\{z_n\}$ and
$T:\X\rightarrow\X$ an isomorphism such that for every $n\inn$
$T(z_{2n-1}) = z_{2n}$. Thus the space is an example of an HI
space which is not tight by range in a strong sense.

\end{abstract}

\section*{Introduction}

The aim of the present paper is to exhibit a space with the
properties described in the abstract. The norming set $W$ of the
space $\X$ is saturated with constraints and it is very similar to
the corresponding one in \cite{AM}. As it is pointed out in
\cite{AM} the method of saturation under constraints is suitable
for defining spaces with hereditary heterogeneous structure
(\cite{OS1}, \cite{OS2}). The basic ingredients of the norming set
$W$ are the following. First the unconditional frame is the ball
of the dual $T^*$ of Tsirelson space \cite{FJ},\cite{T}; namely $W$ is a
subset of $B_{T^*}$ which satisfies the following properties. As
in \cite{AM} it is closed in the operations
$(\frac{1}{2^n},\mathcal{S}_n,\al),
(\frac{1}{2^n},\mathcal{S}_n,\be)$ which create the type I$_\al$,
type I$_\be$ functionals respectively. Furthermore it includes two
types of special functionals denoted as type II$_+$ and type
II$_-$ functionals. The type II$_-$ functionals are designed to
impose the rich spreading model structure in the space $\X$, while
the type II$_+$ functionals serve a double purpose. First they are
a tool for finding $c_0$ spreading models in every subspace of
$\X$. The $c_0$ spreading models are the fundamental initial
ingredient for the ultimate construction. The second role of the
type II$_+$ functionals is to show that the space $\X$ is not
tight by range. We recall that recently V. Ferenczi and Th.
Schlumprecht have presented in \cite{FS} a variant of
Gowers-Maurey HI space (\cite{GM}) which is HI and not tight by
range.

Since the norming set $W$ is similar to the one in \cite{AM} many
of the critical norm evaluations in the present paper are
identical with the corresponding ones in \cite{AM}. The main
difference of the present construction from the one in \cite{AM}
concerns the ``combinatorial result'' which is a Ramsey type
result yielding $c_0$ spreading models. For the proof of this
result, type II$_+$ functionals are a key ingredient.

We pass to a more detailed description of the properties of the
space $\X$.

\begin{thm*}

The space $\X$ is reflexive, HI and hereditarily unconditional
spreading model universal.

\end{thm*}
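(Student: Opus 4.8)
The plan is to treat the three assertions — reflexivity, the HI property, and unconditional spreading model universality — in turn, in each case reducing to the norm evaluations established earlier (which run parallel to those of \cite{AM}).

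\emph{Reflexivity.} I would show that the natural basis $\{e_n\}_n$ is both shrinking and boundedly complete and invoke James's theorem. For bounded completeness the key is that $W$ is closed under the operation $(\tfrac{1}{2^n},\Sn,\al)$: given a seminormalized block sequence $\{x_k\}_k$, one produces from these type I$_\al$ functionals a Tsirelson-type lower estimate forcing $\|\sum_{k\le n}x_k\|\to\infty$, which precludes a bounded non-convergent series of partial sums. For shrinking I would argue by contradiction: were the basis not shrinking there would be a seminormalized block sequence $\{x_k\}_k$ and a norm-one $f$ with $\inf_k f(x_k)>0$; passing to a rapidly increasing subsequence and applying the basic inequality, the averages $\tfrac1n\sum_{k=1}^n x_k$ have norm tending to $0$, contradicting the lower bound supplied by $f$. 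Since $W\subseteq B_{T^*}$ provides the upper Tsirelson bound needed to run the basic inequality, both estimates are available.

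\emph{Hereditarily indecomposable.} I would verify the Gowers--Maurey criterion: for all block subspaces $Y,Z$ of $\X$ and all $\e>0$ there exist $y\in Y$, $z\in Z$ with $\|y-z\|<\e\|y+z\|$. The mechanism is the construction of dependent sequences. First I would establish the existence of rapidly increasing sequences in every block subspace and prove the basic inequality bounding $|f(\sum_k a_k x_k)|$ for $f\in W$ by an auxiliary Tsirelson-type norm. Then, combining the type I$_\be$ functionals with the type II functionals, I would build an exact pair and from it a dependent sequence $\{x_k\}_{k=1}^{n}$ supported alternately in $Y$ and $Z$. A type I$_\be$ functional norms $\sum_k x_k$ from below, giving $\|\sum_k x_k\|$ of order one, whereas the basic inequality shows that the alternating sum $\sum_k(-1)^k x_k$ has norm of order $2^{-j}$, because the signs defeat every admissible norming functional; this yields the desired $y,z$.

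\emph{Unconditional spreading model universality.} Here I would proceed in two stages. The first, and principal, stage is the combinatorial result: in every subspace there is a seminormalized weakly null sequence generating a $c_0$ spreading model. This is the Ramsey-type statement singled out in the introduction, and type II$_+$ functionals are exactly the device that forces the relevant upper $c_0$ estimate while weak nullity supplies the lower one. In the second stage I would use these $c_0$-generating vectors as raw material: given an arbitrary subsymmetric sequence $\{z_n\}_n$, I would form suitable averages of the $c_0$-vectors whose upper norm estimates are governed by the Tsirelson/auxiliary structure and whose lower estimates are delivered by the type II$_-$ functionals, tuning the parameters so that the spreading model of a subsequence is, up to equivalence, $\{z_n\}_n$.

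\emph{Main obstacle.} The technical heart, on which all three parts rest, is the basic inequality together with the combinatorial $c_0$-spreading-model result. The delicate point in the latter is the interplay of the type II$_+$ functionals with the saturation under constraints; in the former it is the simultaneous control of type I$_\be$ and type II functionals required to make the alternating sum small while keeping the plain sum of order one.
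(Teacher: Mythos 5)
Your reflexivity outline matches the paper: the basis is shown shrinking via the smallness of special convex combinations (Corollary \ref{sccnormestimate}) and boundedly complete via the Tsirelson-type lower estimates coming from the $(\frac{1}{2^n},\Sn,\al)$ operation, and James's theorem is invoked. The first stage of your universality argument (a combinatorial, Ramsey-type result producing $c_0$ spreading models in every block subspace, with the type II$_+$ functionals as the key device) is also the paper's route. The two remaining steps, however, contain genuine gaps. For the HI property, your mechanism for making $\|y-z\|$ small --- ``the signs defeat every admissible norming functional, so the alternating sum has norm of order $2^{-j}$'' --- is the classical Gowers--Maurey argument and it fails in this space: the norming set contains type II$_-$ functionals $\frac12\sum_{q\in F}\la_q(f_q-g_q)$ with $\|\sum_{q\in F}\la_qu_q^*\|_u\le 1$, which are designed precisely to norm the differences $x_k-y_k$ of a dependent sequence from below. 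Indeed $\{x_k-y_k\}_k$ is Schreier-equivalent to Pe\l czy\'nski's universal unconditional basis (Proposition \ref{universalspreadingmodel}), so along some subsequences it generates $\ell_1$ spreading models and the alternating sum is \emph{not} uniformly small. The paper gets smallness only after selecting the subsequence $\{k_n\}_n$ along a $c_0$-subsequence of $\{u_k\}_k$, so that $\frac1m\|\sum_{i=1}^m(x_{k_{n_i}}-y_{k_{n_i}})\|=O(1/m)$, while the type II$_+$ functional $\frac12\sum(f_q+g_q)$ keeps $\frac1m\|\sum(x_{k_{n_i}}+y_{k_{n_i}})\|$ bounded below.

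The second stage of your universality argument has the more serious gap: you propose, for each target subsymmetric sequence $\{z_n\}_n$, to form averages of the $c_0$-generating vectors and tune parameters so that the lower estimates come from type II$_-$ functionals. This is not viable as stated --- averaging vectors that satisfy uniform upper $c_0$/Tsirelson estimates cannot simultaneously produce lower $\ell_p$ estimates for all $p$ with a universal constant, and nothing in your outline explains where, say, an $\ell_{3/2}$ lower bound would come from. The missing idea is that the type II$_-$ functionals carry coefficient constraints measured in the \emph{dual of Pe\l czy\'nski's universal basis}, so that a single sequence works for all targets at once: from the $c_0$ spreading model one builds exact pairs, exact nodes and a dependent sequence $\{x_k,y_k,f_k,g_k\}_k$, and for $z_k=x_k-y_k$ one proves the two-sided estimate $\|\sum_{i=1}^m c_iu_{k_i}\|_u\le\|\sum_{i=1}^m c_iz_{k_i}\|\le 146\|\sum_{i=1}^m c_iu_{k_i}\|_u$ for $m\le k_1<\cdots<k_m$. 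Universality is then inherited from the universality of $\{u_k\}_k$ by passing to subsequences; no per-target tuning is needed, and the uniform constant $146$ comes out automatically. Without identifying this encoding of the universal basis into the norming set, the universality claim is not reachable by the route you describe.
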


The latter means that there exists a universal constant $C>0$ such
that the following holds. For every subspace $Y$ of $\X$ there
exists  a seminormalized weakly null sequence $\{x_n\}_n$
admitting spreading models $C$-equivalent to all spreading
suppression unconditional  sequences. The fundamental property of $\{x_n\}_n$
deriving its spreading model universality is that for every
Schreier set $F\subset\mathbb{N}$ the finite sequence
$\{x_n\}_{n\in F} \stackrel{C}{\sim} \{u_n\}_{n\in F}$, where
$\{u_n\}_n$ denotes Pe\l czynski's universal unconditional basis
\cite{P},\cite{LT}.

The second property of $\X$ is that it is sequentially minimal. We
recall, from \cite{FR}, that a Banach space $X$ with a basis is
sequentially minimal, if in every  infinite dimensional block
subspace $Y$ of $X$ there exists a block sequence
$\{x_n^{(Y)}\}_n$ satisfying the following. In every subspace $Z$
of $X$ there exists a Schauder basic sequence $\{z_k\}_k$
equivalent to a subsequence of $\{x_{n}^{(Y)}\}_n$. Also recall that a Banach space $X$ with a basis is called tight by range, if whenever $\{y_k\}_k$, $\{z_k\}_k$ are block sequences in $X$ with $\ran y_k\cap \ran z_m = \varnothing$ for all $k,m$, then if $Y = [\{y_k\}_k]$ and $Z = [\{z_k\}_k]$, none of these two spaces embeds into the other. A dichotomy of
V. Ferenczi - Ch. Rosendal classification program \cite{FR} yields
that every Banach space $X$ with a Schauder basis $\{e_n\}_n$
either contains a block subspace which is tight by range or a
sequentially minimal subspace. As consequence of this dichotomy,
$\X$ is not tight by range, however we also prove this by showing that the following stronger fact holds.

\begin{thm*}

Every $Y$ block subspace of $\X$ contains a seminormalized block
sequence $\{x_n\}_n$ satisfying the following. There exists an
isomorphism $T:\X\rightarrow\X$ (necessarily onto) such that
$T(x_{2n-1}) = x_{2n}$ for $n\inn$.

\end{thm*}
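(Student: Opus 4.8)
The plan is to exploit the Hereditarily Indecomposable structure of $\X$ from the outset: every bounded operator on $\X$ is of the form $\la I+S$ with $S$ strictly singular, and any such operator is Fredholm of index $0$. Consequently, to produce the desired isomorphism it suffices to build a \emph{strictly singular} operator $S\colon\X\to\X$ with $S(x_{2n-1})=x_{2n}-x_{2n-1}$ for all $n\inn$; then $T:=I+S$ satisfies $T(x_{2n-1})=x_{2n}$, and since $S$ is strictly singular, $T$ is Fredholm of index $0$, so injectivity of $T$ automatically yields that it is an isomorphism onto $\X$ (this is the content of ``necessarily onto''). The whole problem is thus reduced to the construction of the block sequence $\{x_n\}_n$ together with the perturbation $S$.

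First I would, inside the given block subspace $Y$, select the block sequence. The idea is to choose the pairs $(x_{2n-1},x_{2n})$ by means of the type II$_+$ functionals: these are designed precisely to couple two blocks with a $+$ sign, so one arranges that for each $n$ the two vectors $x_{2n-1}$ and $x_{2n}$ carry the same weight and are normed in the same way by every functional of $W$ that reaches them through a type II$_+$ coupling. The decisive requirement is that the difference vectors $d_n:=x_{2n}-x_{2n-1}$ be \emph{negligible}: by tuning the construction so that $\{d_n\}_n$ generates a $c_0$ spreading model (this is exactly the role the introduction assigns to the type II$_+$ functionals), one obtains, together with the reflexivity of $\X$, the mechanism that forces any operator whose action is carried by the $d_n$'s to be strictly singular.

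Next I would define $S$ on all of $\X$ by a global formula rather than on $\spn\{x_n\}_n$ alone. Choosing norm-one functionals $\phi_n\in\X^*$ biorthogonal to $\{x_{2n-1}\}_n$ and supported compatibly with the block structure, set $S(x)=\sum_n \phi_n(x)\,d_n$. The summability of the $c_0$ behaviour of $\{d_n\}_n$ against the (essentially unconditional, Schreier-admissible) action of the $\phi_n$'s gives $\|S\|<\infty$, while $S(x_{2n-1})=d_n$ holds by biorthogonality. Since the range of $S$ is concentrated on $\overline{\spn}\{d_n\}_n$, the previous step makes $S$ strictly singular. Finally $T=I+S$ is injective: if $(I+S)x=0$ then $x=-Sx$ lies in $\overline{\spn}\{d_n\}_n$, and the estimates controlling $S$ on this subspace force $x=0$; by the index-$0$ Fredholm property $T$ is then the required isomorphism onto $\X$.

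The main obstacle is the norm estimate underlying the second and third steps, namely that the difference vectors really are negligible and that $S$ is bounded. This is where the conditional part of the norming set intervenes: while the type I$_\al$ and type II$_+$ functionals treat $x_{2n-1}$ and $x_{2n}$ symmetrically, the type I$_\be$ and especially the type II$_-$ functionals --- the very ones imposing the rich spreading model structure --- are capable of distinguishing the two vectors of a pair and hence of detecting $d_n$. The heart of the argument is therefore a selection of the block sequence, via a rapidly increasing sequence whose weights are chosen incompatibly with the admissible type II$_-$ functionals, ensuring that every such functional can charge at most one pair with a controlled coefficient; the resulting estimate then bounds $f(Sx)$ uniformly over $f\in W$ and yields simultaneously the boundedness and the strict singularity of $S$.
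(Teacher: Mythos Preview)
Your overall skeleton is the paper's: inside $Y$ one builds a dependent sequence $\{x_k,y_k,f_k,g_k\}_k$ (so that $x_{2n-1}=x_{k_n}$, $x_{2n}=y_{k_n}$ for a suitable subsequence), defines a diagonal operator $S$ sending $\X$ into the span of the differences $d_n=x_{k_n}-y_{k_n}$, and takes $T=I\mp S$. However, there is a genuine gap in your boundedness argument, and your invertibility argument misses the decisive simplification.

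\textbf{The gap.} You put all the weight on the $c_0$ behaviour of the \emph{range} sequence $\{d_n\}_n$ and leave the functionals $\phi_n$ essentially unspecified (``norm-one biorthogonal, supported compatibly''). This is backwards. What makes $Sx=\sum_n\phi_n(x)d_n$ bounded (this is the content of Proposition~\ref{operator}, coming from \cite{ADT}) is that the \emph{functionals} generate a $c_0$ spreading model in $\X^*$; on the range side one only needs that $\{d_n\}_n$ does not generate $\ell_1$. In the paper the functionals are not generic biorthogonals but the specific $f_{k_n}+g_{k_n}$ from the special branch: the definition of type II$_+$ functionals in $W$ forces $\{f_k+g_k\}_k$ to generate $c_0$ in $\X^*$. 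With arbitrary biorthogonal $\phi_n$ there is no reason for $\sum_n\phi_n(x)d_n$ even to converge. Your final paragraph, about choosing weights ``incompatibly with the admissible type II$_-$ functionals'', is accordingly misdirected: the required estimate on $\{x_k-y_k\}_k$ is not obtained by dodging type II$_-$, but from the exact-vector machinery (Proposition~\ref{universalspreadingmodel}), which shows that $\{x_k-y_k\}_k$ already admits a $c_0$ spreading model along a subsequence.

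\textbf{The simplification.} Once the correct functionals are used, one has $(f_{k_n}+g_{k_n})(x_{k_n})=(f_{k_n}+g_{k_n})(y_{k_n})=1$, so $S(x_{k_n})=S(y_{k_n})=d_n$ and hence $S(d_n)=0$. Thus $S[\X]\subset\ker S$, i.e.\ $S^2=0$, and $(I-S)(I+S)=(I+S)(I-S)=I$: the operator $T=I-S$ is an explicit isomorphism with inverse $I+S$. This replaces your Fredholm-plus-injectivity route, which as written is incomplete (nothing in your setup guarantees $x=-Sx\Rightarrow x=0$; a strictly singular perturbation of the identity can have nontrivial, albeit finite-dimensional, kernel).
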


The above result is a direct consequence of the structure imposed
to the norming set $W$ and hence to the space $\X$, in order to
achieve the rich spreading model structure. In particular the
following is proved.

\begin{prp*}

Let $Y$ be a block subspace of $\X$. Then there exist
$\{x_n,y_n\}_n$ in Y, $\{f_n,g_n\}_n$ such that $f_n,g_n$ belong to
$W$, $\ran x_n = \ran f_n$, $\ran y_n = \ran g_n$, $x_n < y_n <
x_{n+1}$, $\{x_n\}_n, \{y_n\}_n$ are seminormalized, $f_n(x_n) =
1, g_n(y_n) = 1$ and $\{f_n + g_n\}_n$ generates a $c_0$ spreading
model while $\{x_n - y_n\}_n$ does not generate an $\ell_1$
spreading model.

\end{prp*}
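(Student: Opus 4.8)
The plan is to read the pairs off the combinatorial result and to exploit a single structural identity: with the range conditions in force, the type~II$_+$ functional built from $f_n$ and $g_n$ annihilates the difference $x_n - y_n$. First I would apply the Ramsey-type combinatorial result inside $Y$ to produce a seminormalized block sequence generating a $c_0$ spreading model, together with the type~II$_+$ functionals witnessing it. Each such functional is assembled from a conjugate pair $f_n,g_n\in W$, and by choosing the underlying blocks I would arrange two successive vectors $x_n < y_n < x_{n+1}$ with $\ran x_n = \ran f_n$, $\ran y_n = \ran g_n$, normalized so that $f_n(x_n) = 1$ and $g_n(y_n) = 1$, while retaining seminormalization of $\{x_n\}_n$ and $\{y_n\}_n$ throughout the selection. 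Since $\ran f_n = \ran x_n < \ran y_n = \ran g_n$, the supports are disjoint across the pair, whence $f_n(y_n) = g_n(x_n) = 0$ and therefore $(f_n + g_n)(x_n - y_n) = 0$.

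To obtain the $c_0$ spreading model for $\{f_n + g_n\}_n$ I would argue the two bounds separately. The lower bound is immediate: $(f_n + g_n)(x_n + y_n) = 2$ while $\|x_n + y_n\|$ is bounded above, so each $f_n + g_n$ is seminormalized in $\X^*$. For the upper bound the point is that $f_n + g_n$ is a fixed multiple of a type~II$_+$ functional and the supports $\ran x_n \cup \ran y_n$ are successive; the saturation of $W$ under the operations $(\tfrac{1}{2^m},\mathcal{S}_m,\al)$ and $(\tfrac{1}{2^m},\mathcal{S}_m,\be)$, together with the constraint limiting how many type~II functionals a single element of $W$ may incorporate, forces $\big\|\sum_{n\in F}\e_n(f_n + g_n)\big\|$ to be dominated by a universal constant independent of $F$ and of the signs $\e_n$. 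This is precisely the mechanism by which type~II$_+$ functionals generate $c_0$ spreading models, and the required estimate transfers from the analogous norm evaluations in \cite{AM}.

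The delicate part, which I expect to be the main obstacle, is showing that $\{x_n - y_n\}_n$ does not generate an $\ell_1$ spreading model; here the identity $(f_n+g_n)(x_n-y_n)=0$ does the decisive work. The strategy is to pass to a rapidly increasing subsequence, fix a large weight $\tfrac{1}{2^m}$, and bound $\big\|\tfrac{1}{k}\sum_{n\in F}(x_n - y_n)\big\|$ for $F$ an $\mathcal{S}_m$-admissible set of size $k$. For an arbitrary norming $\phi\in W$ I would split $\phi$ according to its tree structure into type~I$_\al$, type~I$_\be$, and type~II parts and apply the basic inequality for rapidly increasing sequences. The type~II$_+$ contributions are suppressed by the annihilation identity, so no functional can synchronize with the pairs to manufacture an $\ell_1$ lower estimate, while the type~I parts yield only a Tsirelson-type bound of order $\tfrac{1}{2^m}$; letting $m\to\infty$ shows that these averages tend to $0$, which is incompatible with an $\ell_1$ lower estimate and hence rules out an $\ell_1$ spreading model. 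The crux is the simultaneous bookkeeping in the basic inequality that tracks the cancellation inside each pair while preventing any single functional from accumulating the differences, and it is exactly here that the precise form of the type~II$_+$ construction must be used in full.
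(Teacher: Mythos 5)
There is a genuine gap in the last (and, as you correctly anticipate, decisive) part of your argument: you analyse the differences $x_n-y_n$ against type I$_\al$, type I$_\be$ and type II$_+$ functionals, but you never account for the type II$_-$ functionals, and these are precisely the functionals built to act on the differences. Since $\ran f_n\cap\ran y_n=\varnothing$ and $\ran g_n\cap\ran x_n=\varnothing$, one has $(f_n-g_n)(x_n-y_n)=2$, so for every admissible $F$ and rationals $\{\la_q\}_{q\in F}$ with $\|\sum_{q\in F}\la_qu_q^*\|_u\leqslant 1$ the functional $\tfrac12\sum_{q\in F}\la_q(f_q-g_q)$ lies in $W$ and gives $\|\sum_{n\in F}c_n(x_n-y_n)\|\geqslant\|\sum_{n\in F}c_nu_n\|_u$, where $\{u_n\}_n$ is Pe\l czy\'nski's universal basis. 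Consequently the averages $\tfrac1k\sum_{n\in F}(x_n-y_n)$ do \emph{not} tend to $0$: the universal basis has subsequences equivalent to the $\ell_1$ basis, so along suitable index sets the differences satisfy a uniform $\ell_1$ lower estimate. Your claimed Tsirelson-type upper bound of order $\tfrac{1}{2^m}$ is therefore false, and the annihilation identity $(f_n+g_n)(x_n-y_n)=0$, while true and useful, cannot by itself suppress the II$_-$ contributions. The missing idea is a \emph{selection}: one must pass to a subsequence $\{k_n\}_n$ for which $\{u_{k_n}\}_n$ is equivalent to a non-$\ell_1$ subsymmetric basis (e.g.\ the $c_0$ basis), and then prove the matching two-sided estimate $\|\sum c_nu_{k_n}\|_u\leqslant\|\sum c_n(x_{k_n}-y_{k_n})\|\leqslant 146\|\sum c_nu_{k_n}\|_u$. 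That upper bound is the hard inductive norm evaluation of Proposition \ref{universalspreadingmodel}, carried out for dependent sequences, and it is what actually rules out the $\ell_1$ spreading model.

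Two secondary points. First, for the functionals you cannot take an arbitrary ``conjugate pair'': the whole construction only works if $\{f_n,g_n\}_{n=1}^\infty$ is a special branch, i.e.\ the weights are chained by the coding function $\sigma$; otherwise neither $\tfrac12\sum_{q\in F}(f_q+g_q)$ nor $\tfrac12\sum_{q\in F}\la_q(f_q-g_q)$ belongs to $W$, and both the $c_0$ estimate for $\{f_n+g_n\}_n$ and the lower $u$-estimate above disappear. This forces the recursive construction of exact pairs and dependent sequences (Propositions \ref{exactpairexist} and \ref{exactsequenceexist}) rather than a direct extraction from the combinatorial result. Second, the upper $c_0$ estimate for $\{f_n+g_n\}_n$ does not come from the saturation operations or any cap on the number of type II functionals inside an element of $W$; it is immediate from the fact that $\tfrac12\sum_{q\in F}(f_q+g_q)$ is itself a type II$_+$ functional in $W$ and hence has dual norm at most $1$ (unconditionality of the spreading model then comes for free from weak* nullity). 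Your conclusion there is correct, but the mechanism you describe is not the one at work.
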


The above proposition yields that there exists a strictly singular
operator  $S:\X\rightarrow\X$ with $S(x_n) = x_n - y_n$ and
$S(y_n) = x_n - y_n$ (see \cite{ADT}). As is explained in
\cite{FR}, the sequences $\{x_n\}_n$, $\{y_n\}_n$ are equivalent.
It is also easy to see that $I-S$ is an isomorphism, satisfying
the conclusion of the above theorem.

Every operator in the space $\X$ is of the form $T = \la I + S$
with $S$ strictly singular. We recall that one of the main
properties of the space in \cite{AM}, is that the composition of
any three strictly singular operators is a compact one. It is shown
that the space $\X$ fails such a property, by proving that in any
block subspace there exists a strictly singular operator, which is
not polynomially compact. The proof of this result is directly
linked to the variety of spreading models appearing in every block
subspace of $\X$.

The paper is organized as follows. In the first section basic notions used throughout the paper are introduced. The second section is devoted to
the definition of the norming set $W$ of the space $\X$, a brief
discussion is also included concerning the role of its
ingredients. The third section concerns some basic norm
evaluations on special convex combinations, which are identical to
the corresponding estimates from \cite{AM}. The fourth section
introduces the definition of the $\al, \be$ indices, which are
defined in the same manner as in \cite{AM} and related results. In
the fifth section, a combinatorial result is stated and proven
and it is used in the sixth section to establish the existence of
$c_0$ spreading models. In the seventh section the structure of the
spreading models of the space $\X$ is studied. In the eighth and
final section it is proven that the space is sequentially minimal,
it is not tight by range and it admits strictly singular non
polynomially compact operators.

\section{Preliminaries}
\label{section1}

\subsection*{Spreading models} The notion of a spreading model was invented by A. Brunel and L. Sucheston in \cite{BS} and has become a central concept in Banach space theory. Below we include the definition and some basic facts concerning spreading models.

\begin{dfn}Let $(X,\|\cdot\|)$ be a Banach space and $(E,\|\cdot\|_*)$ a seminormed
space. Let $\{x_n\}_{n}$ be a bounded sequence in $X$ and
$\{e_n\}_{n}$ a sequence in $E$. We say that {\em $\{x_n\}_{n}$
generates $\{e_n\}_{n}$ as a spreading model}, if there exists
a sequence of positive reals $\{\de_n\}_{n}$ with
$\de_n\searrow 0$, such that for every $n\inn,\; n\leqslant k_1
<\ldots<k_n$ and every choice $\{a_i\}_{i=1}^n\subset[-1,1]$ the
following holds:

\begin{equation*}
\Big|\big\|\sum_{i=1}^na_ix_{k_i}\big\| -
\big\|\sum_{i=1}^na_ie_i\big\|_*\Big| < \de_n.
\end{equation*}

\end{dfn}

In the sequel, by saying that a sequence $\{x_k\}_k$ generates an $\ell_p, 1\leqslant p <\infty$ (resp. $c_0$) spreading model, we shall mean that it generates a spreading model that is equivalent to the usual basis of $\ell_p$ (resp. $c_0$).

Brunel and Sucheston proved that every bounded sequence in a
Banach space, has a subsequence which generates a spreading model.
The main property of spreading models is that they are spreading
sequences, i.e. for every $n\inn,\; k_1 <\ldots<k_n$ and every
choice $\{a_i\}_{i=1}^n\subset\mathbb{R}$ we have
$\|\sum_{i=1}^na_ie_i\|_* = \|\sum_{i=1}^na_ie_{k_i}\|_*$.

Spreading sequences are classified into four categories, with respect
to their norm properties. These are the trivial, the
unconditional, the singular and the non unconditional Schauder
basic spreading sequences (see \cite{AKT}).

A spreading sequence $\{e_n\}_{n}$ is called trivial, if the
seminorm on the space generated by the sequence is not actually a
norm. In this case, Proposition 13 from \cite{AKT} yields the following. If $E$ is the vector space generated by
$\{e_n\}_{n}$ and $\mathcal{N} = \{x\in E: \|x\|_* = 0\}$,
then {\small $E$}$/_\mathcal{N} $ has dimension at most 1. It is also
worth mentioning that a sequence in a Banach space $X$ generates a
trivial spreading model, if and only if it has a norm convergent
subsequence. For more details see \cite{AKT}, \cite{BL}. From now on, we will
only refer to non trivial spreading models.

A spreading sequence is called singular if it is not trivial and not
Schauder basic. The definition of the other two cases is the
obvious one.

\subsection*{The Schreier families} The Schreier families is
an increasing sequence of compact families of finite subsets of the
natural numbers, which first appeared in \cite{AA}, and it is inductively defined in the
following manner.

Set $\mathcal{S}_0 = \big\{\{n\}: n\inn\big\}$ and $\mathcal{S}_1
= \{F\subset\mathbb{N}: \#F\leqslant\min F\}$.

Suppose that $\Sn$ has been defined and set $\mathcal{S}_{n+1} =
\{F\subset\mathbb{N}: F = \cup_{j = 1}^k F_j$, where $F_1 <\cdots<
F_k\in\Sn$ and $k\leqslant\min F_1\}$

If for $n,m\inn$ we set $\Sn*\mathcal{S}_m = \{F\subset\mathbb{N}:
F = \cup_{j = 1}^k F_j$, where $F_1 <\cdots< F_k\in\mathcal{S}_m$
and $\{\min F_j: j=1,\ldots,k\}\in\Sn\}$, then it is well known \cite{AD} and follows easily by induction
that $\Sn*\mathcal{S}_m = \mathcal{S}_{n+m}$.

A sequence of vectors $x_1<\cdots<x_k$ in $c_{00}$ is said to be $\Sn$-admissible if
$\{\min\supp x_i: i=1,\ldots,k\}\in\Sn$.

\subsection*{The suppression unconditional universal basis of Pe\l czy\' nski}

Let $\{x_k\}_k$ be a norm dense sequence in the unit sphere of
$C[0,1]$. Denote by $\{u_k\}_k$ the unit vector basis of $c_{00}$
and define $\|\cdot\|_u$ on $c_{00}$ as follows.
\begin{equation*}
\|\sum_{k=1}^n\al_ku_k\|_u = \sup\big\{\|\sum_{k\in F}\al_kx_k\|:
F\subset\{1,\ldots,n\}\big\}
\end{equation*}

Let $U$ be the completion of $(c_{00},\|\cdot\|_u)$. Then
$\{u_k\}_k$ is a suppression unconditional Schauder basis for $U$,
such that for any $\{y_k\}_k$ suppression unconditional Schauder
basic sequence and $\e>0$, there exists a subsequence of
$\{u_k\}_k$, which is $(1+\e)$-equivalent to $\{y_k\}_k$.

The sequence $\{u_k\}_k$ is called the unconditional basis of Pe\l
czy\' nski (see \cite{P}).

\section{The norming set of the space $\X$.}

In this section we define the norming set $W$ of the space $\X$.
As in \cite{AM}, this set is defined with the use of the sequence
$\{\Sn\}_n$ and also families of
$\Sn$-admissible functionals and the set $W$ will be a subset of
the norming set $W_T$ of Tsirelson space. The key difference
between the construction in \cite{AM} and the present one, is the
way functionals of type II are defined, which yields the
properties of the space $\X$.

\begin{ntt} 

Let $G\subset c_{00}$. A vector $f\in G$ is said to be an average
of size $s(f) = n$, if there exist $f_1,\ldots,f_d\in G,
d\leqslant n$, such that $f = \frac{1}{n}(f_1+\cdots+f_d)$.

A sequence $\{f_j\}_j$ of averages in $G$ is said to be very fast
growing, if $f_1<f_2<\ldots$, $s(f_j)>2^{\max\supp f_{j-1}}$ and
$s(f_j) > s(f_{j-1})$ for $j>1$.
\end{ntt}

\subsection*{The coding function} Choose $L_0 = \{\ell_k:
k\inn\}, \ell_1 > 9$ an infinite subset of the natural numbers such that:
\begin{enumerate}

\item[(i)] For any $k\inn$ we have that $\ell_{k+1} > 2^{2\ell_k}$
and

\item[(ii)] $\sum_{k=1}^\infty
\frac{1}{2^{\ell_k}}<\frac{1}{1000}$.

\end{enumerate}
Decompose $L_0$ into further infinite subsets $L_1, L_2, L_3$. Set
\begin{eqnarray*}
\mathcal{Q} &=&
\big\{\big(f_1,\ldots,f_m\big): m\inn, f_1 <\ldots <f_m\in c_{00}\\
&&\text{with}\;f_k(i)\in\mathbb{Q},\;\text{for}\;i\inn,
k=1,\ldots,m\}
\end{eqnarray*}
Choose a one to one function $\sigma:\mathcal{Q}\rightarrow L_2$,
called the coding function, such that for any
$\big(f_1,\ldots,f_m\big)\in\mathcal{Q}$, we have that
\begin{equation*}
\sigma\big(f_1,\ldots,f_m\big) >
2^{\frac{1}{\|f_m\|_0}}\cdot\max\supp f_m
\end{equation*}

\begin{rmk}
If we set $L = L_1\cup L_2$, For any $n\inn$ we have that
$\#L\cap\{n,\ldots,2^{2n}\}\leqslant 1$, moreover for every $n\in L_3$, we have that $L\cap\{n,\ldots,2^{2n}\}=\varnothing$. \label{lacunarity}
\end{rmk}

\subsection*{The norming set} The norming set $W$ is defined to
be the smallest subset of $c_{00}$ satisfying the following
properties:\vskip3pt

\noindent {\bf 1.} The set $\{\substack{+\\[-2pt]-} e_n\}_{n\inn}$
is a subset of $W$, for any $f\in W$ we have that $-f\in W$, for
any $f\in W$ and any $E$ interval of the natural numbers we have that
$Ef\in W$ and $W$ is closed under rational convex
combinations. Any $f = \substack{+\\[-2pt]-} e_n$ will be called a functional of type 0.\vskip3pt

\noindent {\bf 2.} The set $W$ is closed in the
$(\frac{1}{2^n},\mathcal{S}_n,\al)$ operation, i.e. it contains
any functional $f$ which is of the form $f =
\frac{1}{2^n}\sum_{q=1}^d\al_q$, where $\{\al_q\}_{q=1}^d$ is an
$\Sn$-admissible and very fast growing sequence of $\al$-averages
in $W$. If $E$ is an interval of the
natural numbers, then $g = \substack{+\\[-2pt]-}Ef$ is called a functional of type I$_\al$, of
weight $w(g) = n$.\vskip3pt

\noindent {\bf 3.} The set $W$ is closed in the
$(\frac{1}{2^n},\mathcal{S}_n,\be)$ operation, i.e. it contains
any functional $f$ which is of the form $f =
\frac{1}{2^n}\sum_{q=1}^d\be_q$, $\{\be_q\}_{q=1}^d$ is an
$\Sn$-admissible and very fast growing sequence of $\be$-averages
in $W$. If $E$ is an interval of the
natural numbers, then $g = \substack{+\\[-2pt]-}Ef$ is called a functional of type I$_\be$, of
weight $w(g) = n$.\vskip3pt

\noindent {\bf 4.} For any special sequence $\{f_q,g_q\}_{q=1}^d$
in $W$ and $F\subset\{1,\ldots,d\}$ such that $2(\#F)\leqslant
\min\supp f_{\min F}$, the set $W$ contains any functional $f$
which is of the form $f = \frac{1}{2}\sum_{q\in F}(f_q + g_q)$.

If $E$ is an interval of the natural numbers, then $g = \substack{+\\[-2pt]-}Ef$
is called a functional of type II$_+$ with weights $\widehat{w}(g)
= \{w(f_q) : q\in F, \ran(f_q + g_q)\cap
E\neq\varnothing\}$.\vskip3pt

\noindent {\bf 5.} For any special sequence $\{f_q,g_q\}_{q=1}^d$
in $W$ and $F\subset\{1,\ldots,d\}$ such that $2(\#F)\leqslant
\min\supp f_{\min F}$ and $\{\la_q\}_{q\in F}\subset\mathbb{Q}$
with $\|\sum_{q\in F}\la_qu_q^*\|_u\leqslant 1$, where
$\{u_k^*\}_k$ denotes the biorthogonals of the unconditional basis
of Pe\l czy\' nski, the set $W$ contains any functional $f$ which
is of the form $f = \frac{1}{2}\sum_{q\in F}\la_q(f_q-g_q)$.

If $E$ is an interval of the natural numbers, then $g = \substack{+\\[-2pt]-}Ef$
is called a functional of type II$_-$ with weights $\widehat{w}(g)
= \{w(f_q) : q\in F, \ran(f_q - g_q)\cap
E\neq\varnothing\}$.\vskip3pt

We call a functional $f\in W$ which is either of type II$_+$ or of
type II$_-$, a functional of type II.\vskip3pt

For $d\inn$, a sequence of pairs of functionals of type I$_\al$
$\{f_q,g_q\}_{q=1}^d$, is called a special sequence if
\begin{equation}
f_1<g_1<f_2<g_2<\cdots<f_d<g_d
\end{equation}
\begin{equation}
w(f_q) = w(g_q)\quad\text{for}\quad q=1,\ldots,d
\end{equation}
\begin{equation}
w(f_1)\in
L_1\quad\text{and}\quad\sigma(f_1,g_1,f_2,g_2\ldots,f_{q-1},g_{q-1})
= w(f_q)\;\text{for}\;1<q\leqslant d
\end{equation}\vskip3pt

We call an $\al$-average any average $\al\in W$ of the form $\al =
\frac{1}{n}\sum_{j=1}^df_j, d\leqslant n$, where
$f_1<\cdots<f_d\in W$.\vskip3pt

We call a $\be$-average any average $\be\in W$ of the form $\be =
\frac{1}{n}\sum_{j=1}^df_j, d\leqslant n$, where
$f_1,\ldots,f_d\in W$ are functionals of type II, with pairwise
disjoint weights $\widehat{w}(f_j)$.\vskip3pt

In general, we call a convex combination any $f\in W$ that is not
of type 0, I$_\al$, I$_\be$ or II.\vskip3pt

A sequence of pairs of functionals of type I$_\al$ $b =
\{f_q,g_q\}_{q=1}^\infty$ is called a special branch, if
$\{f_q,g_q\}_{q=1}^d$ is a special sequence for all $d\inn$. We
denote the set of all special branches by $\mathcal{B}$.

If $b = \{f_q,g_q\}_{q=1}^\infty\in\mathcal{B}$, we denote by $b_+
= \{f_q+g_q: q\inn\}$ and $b_- = \{f_q-g_q: q\inn\}$.

For $x\in c_{00}$ define $\|x\| = \sup\{f(x): f\in W\}$ and $\X =
\overline{(c_{00}(\mathbb{N}),\|\cdot\|)}$. Evidently $\X$ has a
bimonotone basis.

\begin{rmk} The norming set $W$ can be inductively constructed to
be the union of an increasing sequence of subsets
$\{W_m\}_{m=0}^\infty$ of $c_{00}$, where $W_0 = \{\substack{+\\[-2pt]-}
e_n\}_{n\inn}$ and if $W_m$ has been constructed, then set
$W_{m+1}^\al$ to be the closure of $W_m$ under taking
$\al$-averages, $W_{m+1}^{\text{I}_\al}$ to be the closure of
$W_{m+1}^\al$ under taking type I$_\al$ functionals,
$W_{m+1}^{\text{I}_\be}$ to be the closure of
$W_{m+1}^{\text{I}_\al}$ under taking type I$_\be$ functionals,
$W_{m+1}^{\text{II}}$ to be the closure of
$W_{m+1}^{\text{I}_\be}$ under taking type II$_+$ and type II$_-$ functionals,
$W_{m+1}^\be$ to be the closure of $W_{m+1}^{\text{II}}$ under
taking $\be$-averages and finally $W_{m+1}$ to be the closure of
$W_{m+1}^\be$ under taking rational convex combinations.
\label{remark1.2}
\end{rmk}

\subsection*{The features of the space $\X$}

Before proceeding to the study of the space $\X$, it is probably
useful to explain the role of the specific  ingredients in the
definition of the norming set $W$. First, as we have mentioned in
the introduction, we will use saturation under constraints in a
similar manner as in \cite{AM}. This yields the type I$_\al$,
I$_\be$ functionals and the indices $\adx, \bdx$ for block
sequences $\{x_k\}_k$ in $\X$, which are defined as in \cite{AM}.
As the familiar reader would observe, the special functionals in
the present construction differ from the corresponding ones in
\cite{AM}. This is due to the desirable main property of the space
$\X$, namely that every subspace contains a sequence admitting all
unconditional spreading sequences as a spreading model. This is
related to property 5 in the above definition of the norming set
$W$.

What requires further discussion are the type II$_+$ functionals.
The primitive role of them is to allow to locate in every block
subspace a seminormalized block sequence generating a $c_0$
spreading model. This follows from the next proposition.

\begin{prp*} Let $\{x_k\}_k$ be a seminormalized block sequence in
$\X$ such that the following hold.
\begin{itemize}

\item[(i)] $\adx = 0$ and $\bdx = 0$

\item[(ii)] For every special branch $b =
\{f_q,g_q\}_{q=1}^\infty$
\begin{equation*}
\lim_k\sup\big\{|f_q(x_k)|\vee|g_q(x_k)|: q\inn\big\} = 0
\end{equation*}

\end{itemize}
Then there exists a subsequence $\{x_{k_n}\}_n$ of $\{x_k\}_k$
generating a $c_0$ spreading model.

\end{prp*}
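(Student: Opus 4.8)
The plan is to fix the seminormalization constants $0<c\le\|x_k\|\le 1$ and to produce, after passing to a suitable subsequence, uniform two-sided estimates
\[
c\,\max_{1\le i\le n}|a_i| \;\le\; \Big\|\sum_{i=1}^n a_i x_{k_i}\Big\| \;\le\; C\,\max_{1\le i\le n}|a_i|
\]
valid for all scalars $a_1,\dots,a_n$ and all sufficiently spread index tuples, which is exactly what it means for the subsequence to generate a $c_0$ spreading model with constant $C/c$. The lower estimate is immediate and needs no passage to a subsequence: if $|a_{i_0}| = \max_i|a_i|$, choose $\phi\in W$ with $\phi(x_{k_{i_0}})\ge c$ and restrict it to the interval $E=\ran x_{k_{i_0}}$; since the blocks are successive, $(\pm E\phi)\big(\sum_i a_i x_{k_i}\big) = |a_{i_0}|\,\phi(x_{k_{i_0}}) \ge c\,\max_i|a_i|$.

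The entire difficulty lies in the upper estimate, which I would obtain by bounding $|f(\sum_i a_i x_{k_i})|$ uniformly over $f\in W$. For this I would invoke the tree analysis of an arbitrary $f\in W$: every such $f$ is generated by finitely many applications of the defining operations, so it carries a finite tree whose leaves are type $0$ functionals and whose internal nodes are the $(\frac{1}{2^n},\Sn,\al)$, $(\frac{1}{2^n},\Sn,\be)$, type II$_+$, type II$_-$, and convex-combination operations. The estimate then proceeds by induction over this tree, the aim being to show that after passing to a subsequence the total mass any $f$ can place on $\sum_i a_i x_{k_i}$ is $O(\max_i|a_i|)$.

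The contributions coming from the type I$_\al$ and type I$_\be$ parts of the tree are controlled precisely by hypothesis (i). Since $\adx = \bdx = 0$, the relevant estimates from Section 3 force the action of every type I$_\al$- and type I$_\be$-average on a sufficiently late block to be arbitrarily small, and the very-fast-growing constraint on the averages together with the rapid growth of the weights prevents these small quantities from accumulating across the tree. Thus, up to a passage to a subsequence rendering these quantities summable, the type I portion of any $f$ contributes at most a fixed small constant to $|f(\sum_i a_i x_{k_i})|$.

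The main obstacle is the type II part, and this is exactly where hypothesis (ii) and the combinatorial result of Section 4 enter. A type II$_+$ functional has the form $\frac{1}{2}\sum_{q\in F}(f_q+g_q)$ along a special branch $b$, and a type II$_-$ functional the form $\frac{1}{2}\sum_{q\in F}\la_q(f_q-g_q)$; in both cases the dangerous feature is that a single functional aggregates arbitrarily many pieces drawn from $b_+ = \{f_q+g_q\}_q$ or $b_- = \{f_q-g_q\}_q$. Hypothesis (ii) guarantees that each individual piece $f_q(x_{k_i})$ and $g_q(x_{k_i})$ is small for late $i$, but by itself this does not prevent these small quantities from accumulating over the many $q\in F$ overlapping a fixed $x_{k_i}$, nor over the many blocks $x_{k_i}$ that a fixed functional spans. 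The role of the combinatorial result is to tame exactly this accumulation: I expect it to yield, after passing to a subsequence, that for each block $x_{k_i}$ only boundedly many indices $q$ on any special branch interact non-trivially with $\ran x_{k_i}$, the bound being enforced by the rapid growth of the weights $w(f_q)$ dictated by the coding function $\sigma$ (whose values lie in the lacunary set $L$, cf.\ Remark \ref{lacunarity}) together with the admissibility constraint $2(\#F)\le\min\supp f_{\min F}$. Combining this control of overlaps with the smallness furnished by (ii), the type II contribution to $|f(\sum_i a_i x_{k_i})|$ is again bounded by a fixed multiple of $\max_i|a_i|$, which with the type I estimate completes the upper bound and hence the proof.
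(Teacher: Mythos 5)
Your architecture --- trivial lower bound, upper bound by induction on the tree analysis of $f\in W$, type I parts controlled by hypothesis (i), type II parts as the crux --- is the same as the paper's (Propositions \ref{finitec0} and \ref{c0spreadingmodel}). But the type II step, which is the entire content of the result, is not carried out, and the one concrete mechanism you propose for it is not the right one. You assert that the combinatorial result should give, after passing to a subsequence, that for each block $x_{k_i}$ only boundedly many indices $q$ on any special branch interact non-trivially with $\ran x_{k_i}$, enforced by the growth of the weights and the admissibility constraint. That is not what Proposition \ref{propcombinatorial} says, and even if it were available it would not close the argument: the restriction of any $f\in W$ to $\ran x_{k_i}$ already lies in $W$, so the contribution of a single block is trivially at most $\max_i|a_i|$ no matter how many $q$ touch it. The danger is accumulation \emph{across} blocks: a type II$_+$ functional $\frac{1}{2}\sum_{q\in F}(f_q+g_q)$ could a priori assign to each of $n$ blocks its own pair $(f_{q_i},g_{q_i})$ with $|(f_{q_i}+g_{q_i})(x_{k_i})|\geqslant\e$, giving $\|\sum_i x_{k_i}\|\geqslant n\e/2$; hypothesis (ii) controls this only for one fixed branch at a time, whereas the upper estimate must be uniform over all type II functionals built from all (uncountably many) branches. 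Your sketch does not engage with this uniformity problem.

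What actually kills this scenario is Proposition \ref{propcombinatorial}: for a suitable subsequence, any type II functional that \emph{separates} three of the blocks must be small on one of them. In the proof of Proposition \ref{finitec0} this is used to show that the set $E_1$ of significantly normed blocks met by at least two pairs has at most two elements, and that at most two indices $q$ are involved in norming the remaining significantly normed blocks; only then is the type II contribution $O(\max_i|a_i|)$. The proof of Proposition \ref{propcombinatorial} is itself a genuinely nontrivial Ramsey argument in which each hypothesis is consumed in a different contradiction branch: separating pairs with distinct weights across $k$ for fixed $m$ yield a $\be$-average norming a late block, contradicting $\bdx=0$ via Proposition \ref{bindexequivalent}; eventually constant weights yield a $w^*$-cluster point contradicting Corollary \ref{shrinking}; and in the remaining case the separating pairs cohere, via the injectivity of $\sigma$ and Lemma \ref{weightsb}, into a single special branch that $\e'$-norms the whole sequence, which is exactly what hypothesis (ii) forbids. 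None of this appears in your proposal, so the key step is missing and the stated substitute for it would not suffice.
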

Note that in \cite{AM}, property (i) is sufficient for a sequence
to have a subsequence generating a $c_0$ spreading model. However,
in the present paper this is not the case and the special
functionals of type II$_+$ are crucial for establishing the existence, in every block subspace, of block sequences satisfying (i) and
(ii) in the above proposition.

As consequence, we obtain that in every block subspace there
exists a block sequence generating a $c_0$ spreading model.

In figure \ref{figure1} we describe how the type II$_+$ and type II$_-$ functionals are constructed, starting with a special branch $b = \{f_q,g_q\}_q$.

\begin{figure}[htb]
\centering
\includegraphics[scale=0.65]{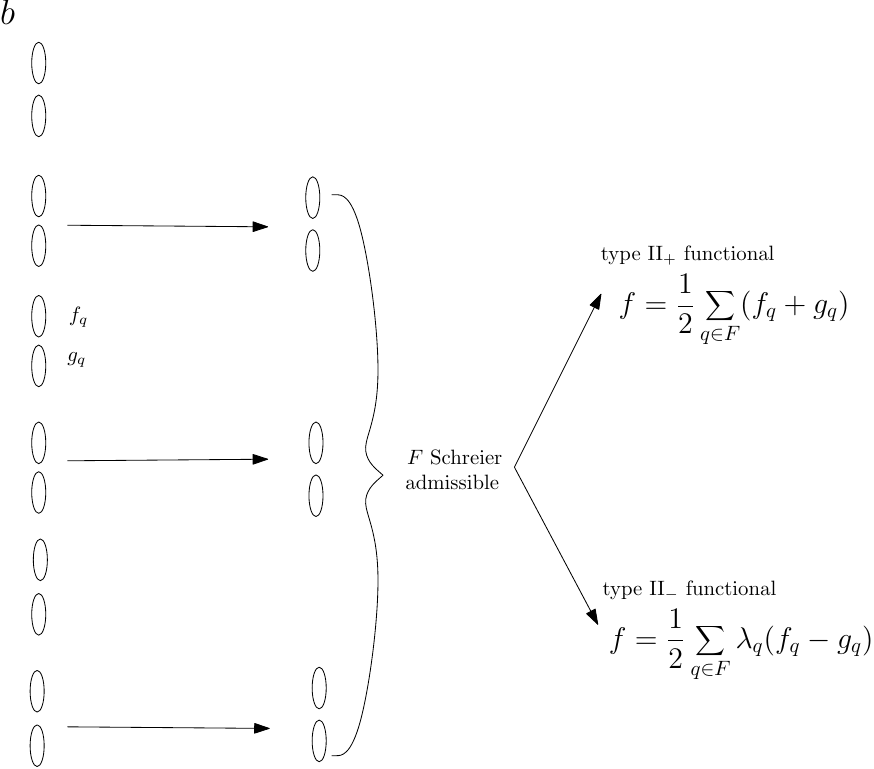}
\caption{}
\label{figure1}

\end{figure}

As in
\cite{AM}, from the $c_0$ spreading model one can pass to exact
nodes (see Def. \ref{definitionexactnode}) $\{x_k,y_k,f_k,g_k\}$,
with $\{f_k,g_k\}_{k=1}^\infty$ defining a special branch. The sequences $\{x_k\}_k, \{y_k\}_k$ and $\{x_k+y_k\}_k$ all admit only $\ell_1$ as a spreading model. The above property is imposed by the type II$_+$ functionals.
On the other hand, the type II$_-$ functionals yield that the sequence $\{x_k - y_k\}_k$ is spreading model universal, namely for every spreading and unconditional sequence $\{d_k\}_k$, there exists a subsequence of $\{x_k - y_k\}_k$, admitting a spreading model equivalent to $\{d_k\}_k$. A secondary
role of the type II$_+$ special functionals is to determine
intertwined equivalent sequences $\{v_k,w_k\}_k$. Those are
subsequences of the above described sequence $\{x_k,y_k\}_k$.

As in \cite{AM}, the norming set of the space $\X$ is a subset of
the unit ball of the dual $T^*$ of Tsirelson space (see
\cite{FJ}). Moreover, most of the critical norm evaluations are
identical with those in \cite{AM}.

\section{basic evaluations for special convex combinations}

In this section we present some results concerning estimations of
the norm of special convex combinations. These estimations are
crucial throughout the rest of the paper, as like in \cite{AM},
special convex combinations are one of the main tools used to
establish the properties of the space $\X$.

\begin{dfn} Let $x = \sum_{k\in F}c_ke_k$ be a vector in $c_{00}$.
Then $x$ is said to be a $(n,\e)$ basic special convex combination
(or a $(n,\e)$ basic s.c.c.) if:

\begin{enumerate}

\item[(i)] $F\in\Sn, c_k\geqslant 0$, for $k\in F$ and $\sum_{k\in
F}c_k = 1$.

\item[(ii)] For any $G\subset F, G\in\mathcal{S}_{n-1}$, we have
that $\sum_{k\in G}c_k < \e$.

\end{enumerate}

\end{dfn}

The proof of the next proposition can be found in \cite{AT},
Chapter 2, Proposition 2.3.

\begin{prp}
For any  infinite subset of the natural numbers $M$, any $n\inn$ and
$\e>0$, there exists $F\subset M, \{c_k\}_{k\in F}$, such that $x
= \sum_{k\in F}c_ke_k$ is a $(n,\e)$ basic
s.c.c.\label{sccexistence}
\end{prp}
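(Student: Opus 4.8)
The statement to prove is Proposition~\ref{sccexistence}: for any infinite $M\subset\mathbb{N}$, any $n\inn$ and $\e>0$, there is a finite $F\subset M$ and coefficients $\{c_k\}_{k\in F}$ so that $x=\sum_{k\in F}c_ke_k$ is an $(n,\e)$ basic special convex combination.

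\begin{proof}[Proof plan]
The plan is to induct on $n$. For $n=1$ the family $\Sch_1$ consists of sets $F$ with $\#F\leqslant\min F$, and the condition on $\Sch_0$-subsets (singletons) is simply that each coefficient be smaller than $\e$. So I would choose $N$ large with $1/N<\e$, pick any $N$ successive elements $m_1<\cdots<m_N$ of $M$ with $m_1\geqslant N$ (possible since $M$ is infinite), and set $c_k=1/N$ on $F=\{m_1,\ldots,m_N\}$; then $F\in\Sch_1$, the coefficients sum to $1$, and each singleton mass is $1/N<\e$.

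For the inductive step, assuming the result for $n$, I would produce an $(n+1,\e)$ basic s.c.c.\ by concatenating many translated copies of $(n,\de)$ basic s.c.c.'s and averaging them, exploiting the convolution identity $\Sch_n*\Sch_1=\Sch_{n+1}$ recalled in the excerpt. Concretely, fix $\de>0$ and an integer $d$ to be specified. Using the inductive hypothesis repeatedly on tails of $M$, choose successive blocks $F_1<F_2<\cdots<F_d$ in $M$, each carrying an $(n,\de)$ basic s.c.c.\ $x_j=\sum_{k\in F_j}c_k^{(j)}e_k$, arranged so that $\{\min F_j\}_{j=1}^d\in\Sch_1$ (achieved by demanding $d\leqslant\min F_1$, which is legitimate since we may start the whole construction arbitrarily far out in $M$). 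Then set $F=\bigcup_j F_j$ and $x=\frac1d\sum_{j=1}^d x_j=\sum_{k\in F}c_ke_k$. Since $F_j\in\Sch_n$ and $\{\min F_j\}_j\in\Sch_1$, we get $F\in\Sch_n*\Sch_1=\Sch_{n+1}$, and the coefficients are nonnegative and sum to $1$.

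The substantive step is verifying condition (ii): for every $G\subset F$ with $G\in\Sch_n$ one needs $\sum_{k\in G}c_k<\e$. Here I would use the standard decomposition of an $\Sch_n=\Sch_{(n-1)}*\Sch_1$ set relative to the block structure $\{F_j\}$. The intersection $G\cap F_j$ that lies in $\Sch_{n-1}$ contributes at most $\de/d$ to $\sum_{k\in G}c_k$ by the inductive bound applied inside $x_j$, while the number of indices $j$ for which $G$ meets $F_j$ in a set \emph{not} in $\Sch_{n-1}$ is controlled by the Schreier admissibility of $G$ and is small compared to $d$; those ``bad'' blocks each contribute at most their total mass $1/d$. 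Balancing these two contributions, choosing first $d$ large and then $\de$ small (so that $\de+(\text{bad-block count})/d<\e$), yields (ii). The main obstacle, and the only place care is genuinely required, is this counting of bad blocks: making precise that an $\Sch_n$ set cannot split across too many of the $F_j$'s in an $\Sch_n$-wasteful way, which is exactly where the inductive structure $\Sch_{n}*\Sch_1$ and the lower bound $d\leqslant\min F_1$ on the starting index are used. For the full details I would refer to \cite{AT}, Chapter~2, Proposition~2.3, as cited.
\end{proof}
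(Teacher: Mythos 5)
Your base case and the overall inductive frame are fine, but the key counting claim in your inductive step is false, and with it the whole balancing argument collapses. You assert that for $G\in\mathcal{S}_n$ the number of ``bad'' blocks (those $j$ with $G\cap F_j\notin\mathcal{S}_{n-1}$) is small compared to $d$. The only bound Schreier admissibility gives is this: writing $G=\bigcup_{i=1}^m G_i$ with $G_1<\cdots<G_m\in\mathcal{S}_{n-1}$ and $m\leqslant\min G$, each bad block must meet at least two of the $G_i$, so there are at most $m-1\leqslant\min G-1$ bad blocks. But in your construction $\min G\geqslant\min F_1\geqslant d$, so this bound is vacuous, and the bad blocks can in fact be a fixed proportion of all blocks. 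Concretely, take the very first inductive step $n=1\to 2$ and the most natural instantiation of your scheme: $M=\mathbb{N}$, each $F_j$ consisting of $N=\lceil 1/\de\rceil+1$ consecutive integers with uniform coefficients $1/N$ (a legitimate $(1,\de)$ basic s.c.c.\ once $\min F_j\geqslant N$), the blocks consecutive with $\min F_1=d\geqslant N$. Put $j_0=\lceil d/2\rceil$ and $G=\bigcup_{j>j_0}F_j$. Then $\#G=(d-j_0)N\leqslant \frac{d}{2}N$ while $\min G=d+j_0N>\frac{d}{2}N$, so $G\in\mathcal{S}_1\subset\mathcal{S}_n$; yet $\sum_{k\in G}c_k=(d-j_0)/d\approx\frac12$. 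Every block met by $G$ is bad, and no choice of ``first $d$ large, then $\de$ small'' can repair this: with a uniform $\de$ the vector $\frac1d\sum_jx_j$ is simply not an $(n+1,\e)$ basic s.c.c.

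The missing idea --- which is the actual content of the result you and the paper both cite (\cite{AT}, Chapter 2, Proposition 2.3, proved there via the hierarchy of repeated averages) --- is that the inner averages must become rapidly \emph{finer} as $j$ increases: the $j$-th block must carry an $(n,\de_j)$ basic s.c.c.\ with $\de_j$ chosen \emph{after} $F_{j-1}$, of the order $\e/(2^{j}\max F_{j-1})$; equivalently, one proves inductively the self-scaling estimate that the $n$-fold repeated average $\xi_n(M)$ satisfies $\sum_{k\in G}\xi_n(M)(k)\leqslant c_n/\min M$ for every $G\in\mathcal{S}_{n-1}$, with $c_n$ depending only on $n$. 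With a bound that decays in the starting point, the incidence count does close: a set $G$ starting in a late block may have $\min G$ (hence $m$) enormous, but the coefficients it can reach are smaller by exactly the compensating factor. A fixed quality parameter $\de$ combined with a bad-block count cannot substitute for this mechanism, so your sketch does not reduce to the cited proof; it deviates from it at precisely the step that carries all the difficulty.
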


\begin{dfn}
Let $x_1 <\cdots<x_m$ be vectors in $c_{00}$ and $\psi(k) =
\min\supp x_k$, for $k=1,\ldots,m$. Then $x = \sum_{k=1}^mc_kx_k$
is said to be a $(n,\e)$ special convex combination (or $(n,\e)$
s.c.c.), if $\sum_{k=1}^mc_ke_{\psi(k)}$ is a $(n,\e)$ basic
s.c.c.\label{defscc}
\end{dfn}

The proof of the following result can be found in \cite{AM}, Proposition 2.5.

\begin{prp} Let $x = \sum_{k\in F}c_ke_k$ be a $(n,\e)$ basic
s.c.c. and $G\subset F$. Then the following holds.

\begin{equation*}
\|\sum_{k\in G}c_ke_k\|_T \leqslant\frac{1}{2^n}\sum_{k\in G}c_k +
\e
\end{equation*}\label{Tsirelsonscc}
\end{prp}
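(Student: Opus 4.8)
The plan is to pass to the norming set $W_T$ of Tsirelson space: since $\|y\|_T = \sup\{f(y):f\in W_T\}$, it suffices to show that for every $f\in W_T$ one has $f\big(\sum_{k\in G}c_ke_k\big)\leqslant \frac{1}{2^n}\sum_{k\in G}c_k+\e$. Recall that $W_T$ is generated from $\{\pm e_k^*\}_k$ by the $(\frac{1}{2},\mathcal{S}_1)$ operation, so each $f\in W_T$ admits a tree analysis: a finite tree with root $f$, terminal nodes of the form $\pm e_k^*$, and each non-terminal node $h$ of the form $h=\frac{1}{2}\sum_i h_i$, where the successors $h_i$ are successive and $\mathcal{S}_1$-admissible. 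For $k\in\supp f$ let $d(k)$ be the number of operations along the branch joining the root to the terminal node $\pm e_k^*$; then the coefficient of $e_k^*$ in $f$ is exactly $\pm 2^{-d(k)}$, and in particular its modulus is at most $1$.

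Next I would split $G$ according to this depth, writing $G_1=\{k\in G:d(k)\leqslant n-1\}$ and $G_2=G\setminus G_1$. For $G_2$ every relevant coefficient satisfies $2^{-d(k)}\leqslant 2^{-n}$, so the contribution of these indices is bounded by $\frac{1}{2^n}\sum_{k\in G_2}c_k\leqslant\frac{1}{2^n}\sum_{k\in G}c_k$. For $G_1$ the idea is to invoke the defining property (ii) of a $(n,\e)$ basic s.c.c.: the coefficients are at most $1$ in modulus, so the contribution is bounded by $\sum_{k\in G_1}c_k$, and it remains to check that $G_1\in\mathcal{S}_{n-1}$, which by property (ii) forces $\sum_{k\in G_1}c_k<\e$. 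Adding the two estimates yields the claimed bound, and taking the supremum over $f\in W_T$ finishes the proof.

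The step carrying the real content is the combinatorial claim that the set $\{k\in\supp f:d(k)\leqslant n-1\}$ lies in $\mathcal{S}_{n-1}$; once this is known, heredity of $\mathcal{S}_{n-1}$ gives $G_1\in\mathcal{S}_{n-1}$ since $G_1\subseteq F$ is a subset of it. I would prove this claim by induction on the tree height, reading the $(\frac{1}{2},\mathcal{S}_1)$ operation at each node exactly as one application of the $\mathcal{S}_1$-structure: a node at depth $j$ collects supports of its successors, each an $\mathcal{S}_{j-1}$ set by induction, indexed by an $\mathcal{S}_1$-admissible family, so the union is an $\mathcal{S}_j$ set, in line with $\mathcal{S}_1*\mathcal{S}_{j-1}=\mathcal{S}_j$. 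This bookkeeping between the number of operations and the Schreier index is the main obstacle and must be set up carefully; the rest is the routine splitting-by-depth argument above.
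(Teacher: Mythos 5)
Your argument is correct and is essentially the standard proof of this fact: the paper itself omits the proof, citing \cite{AM} (Proposition 2.5), and the argument there is exactly your tree-analysis of a functional in $W_T$, with the splitting of the support by the depth $d(k)$ and the key combinatorial observation that $\{k\in\supp f: d(k)\leqslant n-1\}\in\mathcal{S}_{n-1}$ (via $\mathcal{S}_1*\mathcal{S}_{j-1}=\mathcal{S}_j$ and heredity), so that condition (ii) of the basic s.c.c. controls that part by $\e$ while the deep part is controlled by $\frac{1}{2^n}\sum_{k\in G}c_k$. No gaps; this is the intended proof.
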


The next proposition is identical to Corollary 2.8 from \cite{AM}.

\begin{prp} Let $\{x_k\}_{k}$ be a block sequence in $\X$ such that
$\|x_k\|\leqslant 1, \{c_k\}_k\subset\mathbb{R}$ and $\phi(k) =
\max\supp x_k$ for all $k$. Then:
\begin{equation}
\|\sum_kc_kx_k\| \leqslant 6\|\sum_kc_ke_{\phi(k)}\|_T
\end{equation}\label{tsirelsondominated}
\end{prp}

The following corollary is an easy consequence of Propositions
\ref{Tsirelsonscc} and \ref{tsirelsondominated} and its proof can
be found in \cite{AM}, Corollary 2.9.

\begin{cor} Let $x = \sum_{k=1}^mc_kx_k$ be a $(n,\e)$ s.c.c. in $\X$, such
that $\|x_k\|\leqslant 1$, for $k=1,\ldots,m$. If
$F\subset\{1,\ldots,m\}$, then
\begin{equation*}
\|\sum_{k\in F}c_kx_k\| \leqslant \frac{6}{2^n}\sum_{k\in F}c_k +
12\e.
\end{equation*}
In particular, we have that $\|x\|\leqslant \frac{6}{2^n} +
12\e$.\label{sccnormestimate}
\end{cor}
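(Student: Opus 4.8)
The plan is to chain the two estimates already available, reducing everything to a single combinatorial observation. First I would apply Proposition~\ref{tsirelsondominated} to the subblock sequence $\{x_k\}_{k\in F}$, which gives
\begin{equation*}
\|\sum_{k\in F}c_kx_k\| \leqslant 6\|\sum_{k\in F}c_ke_{\phi(k)}\|_T,
\end{equation*}
where $\phi(k)=\max\supp x_k$. Thus it suffices to bound the Tsirelson norm on the right by $\frac{1}{2^n}\sum_{k\in F}c_k+2\e$, and this will follow from Proposition~\ref{Tsirelsonscc} applied to the subset $\{\phi(k):k\in F\}$ \emph{once} we know that the full vector $\sum_{k=1}^m c_ke_{\phi(k)}$ is a $(n,2\e)$ basic s.c.c. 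The whole difficulty is concentrated in verifying this last assertion, because Definition~\ref{defscc} is phrased through the indices $\psi(k)=\min\supp x_k$, whereas the Tsirelson domination in Proposition~\ref{tsirelsondominated} is phrased through $\phi(k)=\max\supp x_k$.

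To verify the claim I would check the two defining conditions of a basic s.c.c. for $\sum_k c_ke_{\phi(k)}$. Condition~(i) is immediate: since $\{\psi(k):k\}\in\Sn$, $\psi(k)\leqslant\phi(k)$, and the Schreier families are closed under spreading to the right, we get $\{\phi(k):k\}\in\Sn$, while the coefficients are unchanged. The hard part is condition~(ii). Given $H$ with $\{\phi(k):k\in H\}\in\mathcal{S}_{n-1}$, write $H=\{k_1<\cdots<k_r\}$ and exploit the interleaving $\psi(k)\leqslant\phi(k)<\psi(k+1)$: for each $i$ we have $\psi(k_{i+1})>\phi(k_i)$, so $\{\psi(k_2),\ldots,\psi(k_r)\}$ is an element-wise right-spread of $\{\phi(k_1),\ldots,\phi(k_{r-1})\}$, which is a subset of a set in $\mathcal{S}_{n-1}$. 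By the hereditary and spreading properties of the Schreier families this yields $\{\psi(k_2),\ldots,\psi(k_r)\}\in\mathcal{S}_{n-1}$, whence condition~(ii) for the original $(n,\e)$ basic s.c.c.\ $\sum_kc_ke_{\psi(k)}$ gives $\sum_{i=2}^rc_{k_i}<\e$. Since the singleton $\{\psi(k_1)\}\in\mathcal{S}_0\subset\mathcal{S}_{n-1}$ forces $c_{k_1}<\e$ as well, we obtain $\sum_{k\in H}c_k<2\e$, which is exactly condition~(ii) with $2\e$ in place of $\e$.

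With the claim in hand, Proposition~\ref{Tsirelsonscc} gives $\|\sum_{k\in F}c_ke_{\phi(k)}\|_T\leqslant\frac{1}{2^n}\sum_{k\in F}c_k+2\e$, and substituting into the first displayed inequality produces
\begin{equation*}
\|\sum_{k\in F}c_kx_k\|\leqslant 6\Big(\frac{1}{2^n}\sum_{k\in F}c_k+2\e\Big)=\frac{6}{2^n}\sum_{k\in F}c_k+12\e,
\end{equation*}
as required; taking $F=\{1,\ldots,m\}$ and using $\sum_kc_k=1$ yields the ``in particular'' statement. The main obstacle is precisely the step in the previous paragraph: one cannot simply transport a set that is Schreier in the $\phi$-indices back to the $\psi$-indices, since the Schreier families are not closed under spreading to the \emph{left}; the device of discarding the least index and comparing $\psi(k_{i+1})$ with $\phi(k_i)$ is what circumvents this, at the harmless cost of doubling $\e$.
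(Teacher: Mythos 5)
Your proof is correct and follows exactly the route the paper indicates: the corollary is stated there as an easy consequence of Propositions~\ref{Tsirelsonscc} and~\ref{tsirelsondominated} (with the proof deferred to [AM]), and your chaining of those two results --- including the passage from $\psi(k)=\min\supp x_k$ to $\phi(k)=\max\supp x_k$ at the cost of doubling $\e$, which is precisely what produces the constant $12\e = 6\cdot 2\e$ --- is the intended argument. The combinatorial step you flag as the main obstacle is handled correctly via the hereditary and spreading properties of the Schreier families.
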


The proof of the next corollary is based on Corollary \ref{sccnormestimate}. It's proof is identical to the one of Corollary 2.10 from \cite{AM}.

\begin{cor} The basis of $\X$ is shrinking.\label{shrinking}
\end{cor}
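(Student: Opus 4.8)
The plan is to prove that the basis is shrinking by showing that for every normalized block sequence $\{x_k\}_k$ in $\X$ and every $f \in W$, the values $f(x_k)$ tend to zero uniformly enough. I would use the standard characterization: the basis is shrinking if and only if for every bounded block sequence $\{x_k\}_k$ and every $\varepsilon > 0$, only finitely many $k$ satisfy $\sup_{f \in W}|f(x_k)| \geq \varepsilon$ fails to be controllable — more precisely, I would show that $\lim_k \|x_k^*\|_{\X^*} = 0$ is forced, equivalently that no functional in $W$ can witness a persistent lower bound along a block sequence of averages.

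First I would fix a seminormalized block sequence $\{x_k\}_k$ and, toward a contradiction, assume there is $\theta > 0$ and functionals $f_k \in W$ with $\ran f_k \subset \ran x_k$ (we may restrict supports to the range of $x_k$ since $W$ is closed under interval restrictions by property {\bf 1}) and $f_k(x_k) \geq \theta$ for infinitely many $k$. The key device is Proposition \ref{sccexistence}: for any prescribed $n$ and $\e$ I can pass to a subsequence and form a $(n,\e)$ special convex combination $x = \sum_{k=1}^m c_k x_k$ out of these blocks. Normalizing, $\|x\|$ stays bounded below by a fixed positive constant (since each $x_k$ is seminormalized and the coefficients sum to one, the diagonal functional $\sum_k c_k f_k$, suitably assembled, still reads off roughly $\theta$).

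The heart of the argument is then Corollary \ref{sccnormestimate}, which gives the upper bound
\begin{equation*}
\|x\| \leqslant \frac{6}{2^n} + 12\e.
\end{equation*}
Choosing $n$ large and $\e$ small makes the right-hand side smaller than the persistent lower bound $\theta'$ obtained in the previous step, which is the contradiction. The only subtlety I foresee is reconciling the two estimates: the lower bound must be genuinely independent of $n$ and $\e$ while the upper bound decays in both, so I would first extract the contradicting value $\theta' > 0$ using seminormalization and the assumed functionals, and only afterwards select $n, \e$ against that fixed $\theta'$. This forces $\sup\{f(x_k) : f \in W, \ran f \subset \ran x_k\} \to 0$.

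The main obstacle will be the bookkeeping that turns this pointwise decay into the shrinking property in its usual form, namely that every $f \in \X^*$ satisfies $\|f|_{[e_k, e_{k+1}, \dots]}\| \to 0$. Here I would invoke the standard reduction: if the basis were not shrinking there would exist $f \in \X^*$, $\theta > 0$, and a normalized block sequence $\{x_k\}_k$ with $f(x_k) \geq \theta$ for all $k$; restricting $f$ to $\ran x_k$ produces functionals in $B_{\X^*}$, and since $W$ norms $\X$ one can approximate these restrictions by elements of $W$ up to arbitrary precision, recovering exactly the hypothesis of the contradiction argument above. Because this reduction and all the estimates are verbatim those of \cite{AM}, Corollary 2.10, I would simply cite that the argument transfers without change, the norming set $W$ here being a subset of $B_{T^*}$ to which all the relevant s.c.c.\ estimates apply identically.
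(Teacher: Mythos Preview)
Your overall strategy --- assume the basis is not shrinking, obtain a single $f\in\X^*$ and a normalized block sequence $\{x_k\}_k$ with $f(x_k)\geqslant\theta$, form a $(n,\e)$ s.c.c.\ $x=\sum_k c_kx_k$, and collide the lower bound against Corollary~\ref{sccnormestimate} --- is exactly the paper's approach (which is just a citation to \cite{AM}, Corollary~2.10, based on Corollary~\ref{sccnormestimate}).

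However, your lower-bound step as written is wrong. The ``diagonal functional'' $\sum_k c_k f_k$ does \emph{not} read off roughly $\theta$ on $x$: since $\ran f_k\subset\ran x_k$ and the blocks are disjoint,
\[
\Big(\sum_k c_k f_k\Big)\Big(\sum_j c_j x_j\Big)=\sum_k c_k^2 f_k(x_k)\leqslant \theta\sum_k c_k^2\leqslant\theta\max_k c_k,
\]
and for a $(n,\e)$ s.c.c.\ this is at most $\theta\e$, which goes to zero with $\e$. So this functional cannot furnish a persistent lower bound. The detour through restricting $f$ to $\ran x_k$ and approximating by elements of $W$ is both unnecessary and, in the form you use it, fatal to the argument.

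The fix is immediate and makes the proof one line shorter: keep the single functional $f\in\X^*$ from the non-shrinking hypothesis and apply it directly to the s.c.c.,
\[
\|x\|\geqslant f\Big(\sum_k c_k x_k\Big)=\sum_k c_k f(x_k)\geqslant\theta\sum_k c_k=\theta,
\]
which now contradicts $\|x\|\leqslant 6/2^n+12\e$ for $n$ large and $\e$ small. No passage through $W$, no diagonal assembly, is needed.
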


The definition of the norming set yields the following result, the proof of which can be found in \cite{AM}, Corollary 2.11.

\begin{prp} The basis of $\X$ is boundedly complete.
\end{prp}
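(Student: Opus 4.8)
The plan is to show that the basis of $\X$ is boundedly complete. I would prove the contrapositive formulation that a basis is boundedly complete precisely when it does not contain a copy of $c_0$ spanned by the basis in the sense of James: I need to show that whenever $\sup_n\|\sum_{k=1}^n c_ke_k\|<\infty$, the series $\sum_k c_ke_k$ converges, equivalently that the tails $\|\sum_{k=n}^m c_ke_k\|$ go to zero as $n,m\to\infty$. Since $\X$ already has a bimonotone basis with shrinking property established in Corollary \ref{shrinking}, the cleanest route is to invoke the general duality principle: a shrinking basis is boundedly complete if and only if the space is reflexive, but that would be circular here since reflexivity is one of the target conclusions. So instead I would argue directly at the level of the norming set $W$.

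The core step is a James-type argument. First I would assume, for contradiction, that the basis is not boundedly complete. Then there exists a seminormalized block sequence $\{x_k\}_k$ in $\X$ with $\sup_n\|\sum_{k=1}^n x_k\|<\infty$ that fails to converge; after normalizing I may assume $\|x_k\|\geqslant\delta$ for some $\delta>0$ while the partial sums remain bounded by some constant $M$. The idea is to extract a subsequence equivalent to the unit vector basis of $c_0$, which would contradict the norm lower bounds forced by the $\al$- and $\be$-operations and the special convex combination estimates. Concretely, I would pass to a special convex combination $x=\sum_k c_kx_k$ that is an $(n,\e)$ s.c.c. for suitable $n$ and small $\e$, using Proposition \ref{sccexistence} to guarantee its existence on the support of the block sequence. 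On one hand, bounded partial sums together with the upper estimate of Corollary \ref{sccnormestimate} control $\|x\|$ from above. On the other hand, a matching lower estimate coming from the norming functionals of type I$_\be$ (which are precisely tuned to detect averaged mass through the $(\frac{1}{2^n},\Sn,\be)$ operation) forces $\|x\|$ to stay bounded away from zero by an amount incompatible with $c_0$-behaviour as $n$ grows, yielding the contradiction.

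The main obstacle I anticipate is engineering the lower norm estimate: I must produce, for each s.c.c.\ on the block sequence, an explicit functional in $W$ whose value on $x$ is bounded below uniformly, and this requires assembling a very fast growing, $\Sn$-admissible sequence of $\be$-averages (or type II functionals) that correctly captures the seminormalized mass of the $x_k$ without the coefficients $\frac{1}{2^n}$ crushing the estimate. The delicate point is balancing the decay $\frac{1}{2^n}$ against the number of blocks one averages over, so that the lower bound does not degenerate; this is exactly the kind of calibration carried out in the analogous argument in \cite{AM}. Since the norming set here is a subset of $B_{T^*}$ and the relevant estimates are stated to be identical to those in \cite{AM}, I expect the bulk of the technical work to be quotable, and the proof to reduce to verifying that the present type I$_\al$, I$_\be$, and type II functionals supply the same lower bounds as the corresponding functionals there, so the conclusion transfers verbatim.
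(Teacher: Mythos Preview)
Your proposal has a genuine gap: the special convex combination route cannot produce a contradiction here. Corollary \ref{sccnormestimate} gives you an \emph{upper} bound $\|x\|\leqslant \frac{6}{2^n}+12\e$ on the $(n,\e)$ s.c.c.\ $x=\sum_k c_kx_k$, but nothing in the hypotheses (seminormalized blocks $\|x_k\|\geqslant\delta$ together with $\sup_N\|\sum_{k\leqslant N}x_k\|\leqslant M$) forces a \emph{lower} bound on $\|x\|$ that survives as $n\to\infty$. Any functional you assemble via the $(\frac{1}{2^n},\Sn,\al)$ or $(\frac{1}{2^n},\Sn,\be)$ operation carries the same $\frac{1}{2^n}$ factor, and since $\sum_k c_k=1$ the best value such a functional can take on $x$ is of order $\frac{\delta}{2^n}$, matching the upper bound rather than contradicting it. You flag exactly this ``balancing'' issue in your obstacle paragraph, but it is not a delicacy to be calibrated: it is the reason the approach fails. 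The type I$_\be$ functionals play no special role here either; they are not ``tuned to detect averaged mass'' in any way that helps.

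The argument the paper imports from \cite{AM} (Corollary 2.11 there) is direct and avoids s.c.c.'s entirely. From the failure of bounded completeness one extracts blocks $x_k$ with $\|x_k\|\geqslant\delta$ and $\sup_N\|\sum_{k\leqslant N}x_k\|\leqslant M$. Pick $f_k\in W$ with $\ran f_k\subset\ran x_k$ and $f_k(x_k)\geqslant\delta$, then group them into $\al$-averages $\al_q=\frac{1}{\#G_q}\sum_{k\in G_q}f_k$ over successive finite sets $G_q$ with $\#G_q$ growing fast enough to make $\{\al_q\}_q$ very fast growing; note $\al_q(\sum_{k\in G_q}x_k)\geqslant\delta$. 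Given any $d$, take $\al_1<\cdots<\al_d$ with $d\leqslant\min\supp\al_1$, so that $g=\frac{1}{2}\sum_{q=1}^d\al_q\in W$ by the $(\frac{1}{2},\mathcal{S}_1,\al)$ operation. Then for $N$ large,
\[
M\;\geqslant\;\Big\|\sum_{k\leqslant N}x_k\Big\|\;\geqslant\;g\Big(\sum_{k\leqslant N}x_k\Big)\;=\;\frac{1}{2}\sum_{q=1}^d\al_q\Big(\sum_{k\in G_q}x_k\Big)\;\geqslant\;\frac{d\delta}{2},
\]
a contradiction for large $d$. The point is that one acts on the \emph{partial sum} itself, not on a convex combination, so the fixed factor $\frac{1}{2}$ is beaten by the $d$ summands.
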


Combining the previous two results with R. C. James' well known result \cite{J}, we conclude the following.

\begin{cor} The space $\X$ is reflexive.
\end{cor}

Rapidly increasing sequences are defined in the exact same manner,
as in \cite{AM}, Definition 2.13.

\begin{dfn} Let $C\geqslant 1, \{n_k\}_k$ be strictly
increasing natural numbers. A block sequence $\{x_k\}_k$ is called a
$(C,\{n_k\}_k)$ $\al$-rapidly increasing sequence (or
$(C,\{n_k\}_k)\;\al$-RIS) if the following hold.
\begin{enumerate}

\item[(i)] $\|x_k\| \leqslant C,\quad
\frac{1}{2^{n_{k+1}}}\max\supp x_k < \frac{1}{2^{n_k}}\quad$ for
all $k$.

\item[(ii)] For any functional $f$ in $W$ of type I$_\al$ of
weight $w(f) = n$, for any $k$ such that $n<n_k$, we have that
$|f(x_k)| < \frac{C}{2^n}$.

\end{enumerate}
\end{dfn}

\begin{dfn}
Let $n\inn, C\geqslant 1, \theta>0$. A vector $x\in\X$ is called a
$(C,\theta,n)$ vector if the following hold. There exist
$0<\e<\frac{1}{36C2^{3n}}$, and $\{x_k\}_{k=1}^m$ with $\|x_k\| \leqslant C$ for $k=1,\ldots,m$ such that

\begin{enumerate}

\item[(i)] $\min\supp x_1 \geqslant 8C2^{2n}$

\item[(ii)] There exist $\{c_k\}_{k=1}^m\subset[0,1]$ such that
$\sum_{k=1}^mc_kx_k$ is a $(n,\e)$ s.c.c.

\item[(iii)] $x = 2^n\sum_{k=1}^mc_kx_k$ and $\|x\| \geqslant
\theta$

\end{enumerate}
If moreover there exist $\{n_k\}_{k=1}^m$ strictly increasing
natural numbers with $n_1>2^{2n}$ such that $\{x_k\}_{k=1}^m$ is
$(C,\{n_k\}_{k=1}^m)\;\al$-RIS, then $x$ is called a
$(C,\theta,n)$ exact vector.

\end{dfn}

\begin{rmk}
Let $x$ be a $(C,\theta,n)$ vector in $\X$. Then, using Corollary \ref{sccnormestimate} we conclude that $\|x\| <
7C$.\label{exactvectornorm}
\end{rmk}

\section{The $\al,\be$ indices}
The $\al$ and $\be$ indices concerning block sequences in $\X$,
are identically defined, as in \cite{AM}, Definitions 3.1 and 3.2.
Note that in \cite{AM}, the $\al, \be$ indices are sufficient to
fully describe the spreading models admitted by block sequences.
In the present paper, this is not the case. However, the $\al,
\be$ indices retain an important role in determining what
spreading models a block sequence generates.

\begin{dfn}
Let $\{x_k\}_k$ be a block sequence in $\X$ that satisfies the
following. For any $n\inn$, for any very fast growing sequence
$\{\al_q\}_q$ of $\al$-averages in $W$ and for any 
increasing sequence of subsets of the natural numbers $\{F_k\}_k$, such that
$\{\al_q\}_{q\in F_k}$ is $\Sn$-admissible, the following holds.
For any $\{x_{n_k}\}_k$ subsequence of $\{x_k\}_k$, we have that
$\lim_k\sum_{q\in F_k}|\al_q(x_{n_k})| = 0$.

Then we say that the $\al$-index of $\{x_k\}_k$ is zero and write
$\adx = 0$. Otherwise we write $\adx > 0$.
\end{dfn}

\begin{dfn}
Let $\{x_k\}_k$ be a block sequence in $\X$ that satisfies the
following. For any $n\inn$, for any very fast growing sequence
$\{\be_q\}_q$ of $\be$-averages in $W$ and for any 
increasing sequence of subsets of the natural numbers $\{F_k\}_k$, such that
$\{\be_q\}_{q\in F_k}$ is $\Sn$-admissible, the following holds.
For any $\{x_{n_k}\}_k$ subsequence of $\{x_k\}_k$, we have that
$\lim_k\sum_{q\in F_k}|\be_q(x_{n_k})| = 0$.

Then we say that the $\be$-index of $\{x_k\}_k$ is zero and write
$\bdx = 0$. Otherwise we write $\bdx > 0$.
\end{dfn}

\begin{rmk}
Let $\{x_k\}_k$ be a block sequence in $\X$ and $\{E_k\}_k$ be an increasing sequence
of intervals of the natural numbers with $E_k\subset \ran x_k$ for all $k\inn$. Set $y_k = E_kx_k$.

\begin{enumerate}

\item[(i)]
If $\adx = 0$, then $\ady = 0$.

\item[(ii)]
If $\bdx = 0$, then $\bdy = 0$.

\end{enumerate}
\label{indexinterval}
\end{rmk}

\begin{rmk}
Let $\{x_k\}_k$, $\{y_k\}_k$ be block sequence such that if $z_k = x_k + y_k$, $\{z_k\}_k$ is also a block sequence.

\begin{enumerate}

\item[(i)]
If $\adx = 0$ and $\ady = 0$, then $\adz = 0$.

\item[(ii)]
If $\bdx = 0$ and $\bdy = 0$, then $\bdz = 0$.

\end{enumerate}
\label{indexsum}
\end{rmk}

\begin{rmk} Let $\{x_k\}_k$ be a block sequence in $\X$ and $\{F_k\}_k$ be an increasing sequence
of subsets of the natural numbers and $\{c_i\}_{i\in F_k}\subset[0,1]$ with $\sum_{i\in F_k}c_i = 1$
for all $k\inn$. Set $y_k = \sum_{i\in F_k}c_ix_i$.

\begin{enumerate}

\item[(i)]
If $\adx = 0$, then $\ady = 0$.

\item[(ii)]
If $\bdx = 0$, then $\bdy = 0$.

\end{enumerate}
\label{indexconvex}
\end{rmk}

The following two Propositions are proven in \cite{AM}, Proposition 3.3.

\begin{prp} Let $\{x_k\}_k$ be a block sequence in $\X$. Then the
following assertions are equivalent.
\begin{enumerate}

\item[(i)] $\adx = 0$

\item[(ii)] For any $\e>0$ there exists $j_0\inn$ such that for
any $j\geqslant j_0$ there exists $k_j\inn$ such that for any
$k\geqslant k_j$, and for any $\{\al_q\}_{q=1}^d$ $\Sj$-admissible
and very fast growing sequence of $\al$-averages such that
$s(\al_q) > j_0$, for $q=1,\ldots,d$, we have that
$\sum_{q=1}^d|\al_q(x_k)| < \e$.

\end{enumerate}
\label{aindexequivalent}
\end{prp}

\begin{prp} Let $\{x_k\}_k$ be a block sequence in $\X$. Then the
following assertions are equivalent.
\begin{enumerate}

\item[(i)] $\bdx = 0$

\item[(ii)] For any $\e>0$ there exists $j_0\inn$ such that for
any $j\geqslant j_0$ there exists $k_j\inn$ such that for any
$k\geqslant k_j$, and for any $\{\be_q\}_{q=1}^d$ $\Sj$-admissible
and very fast growing sequence of $\be$-averages such that
$s(\be_q) > j_0$, for $q=1,\ldots,d$, we have that
$\sum_{q=1}^d|\be_q(x_k)| < \e$.

\end{enumerate}
\label{bindexequivalent}
\end{prp}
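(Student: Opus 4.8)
The plan is to prove the two implications separately, treating (ii)$\Rightarrow$(i) as the routine direction and (i)$\Rightarrow$(ii), via its contrapositive, as the substantial one; the whole argument parallels the corresponding one in \cite{AM}.

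For \textbf{(ii)$\Rightarrow$(i)}, I fix $n\inn$, a very fast growing sequence $\{\be_q\}_q$ of $\be$-averages, an increasing sequence $\{F_k\}_k$ with each $\{\be_q\}_{q\in F_k}$ being $\Sn$-admissible, a subsequence $\{x_{n_k}\}_k$, and an arbitrary $\e>0$. Applying (ii) to $\e$ produces $j_0$. Since $\{\be_q\}_q$ is very fast growing the sizes $s(\be_q)$ are strictly increasing, so the indices with $s(\be_q)\leqslant j_0$ form an initial segment and the corresponding finitely many $\be$-averages have supports inside a fixed initial interval; as $\min\supp x_{n_k}\to\infty$, these vanish on $x_{n_k}$ for all large $k$. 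Setting $j=\max\{n,j_0\}\geqslant j_0$ and using that the Schreier families are increasing, the truncated family $\{\be_q:q\in F_k,\ s(\be_q)>j_0\}$ is a subset of an $\Sn$-admissible family, hence $\Sj$-admissible, is still very fast growing (it is a tail), and consists of $\be$-averages of size $>j_0$. For $k$ large enough that $n_k\geqslant k_j$, condition (ii) applied at the index $n_k$ gives that its sum against $x_{n_k}$ is $<\e$. Combining, $\limsup_k\sum_{q\in F_k}|\be_q(x_{n_k})|\leqslant\e$, and letting $\e\to0$ yields $\bdx=0$.

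For \textbf{(i)$\Rightarrow$(ii)} I argue the contrapositive. Assuming (ii) fails, fix the resulting $\e>0$: for every $j_0$ there is $j\geqslant j_0$ such that arbitrarily large indices $k$ carry an $\Sj$-admissible very fast growing family of $\be$-averages of size $>j_0$ whose sum against $x_k$ is at least $\e$. The aim is to assemble these witnesses into a single configuration contradicting (i): one global very fast growing sequence $\{\be_q\}_q$, an increasing sequence $\{F_l\}_l$ with each $\{\be_q\}_{q\in F_l}$ $\Sn$-admissible for one fixed $n$, and a subsequence $\{x_{n_l}\}_l$ with $\sum_{q\in F_l}|\be_q(x_{n_l})|\geqslant\e$ for all $l$. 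I build the blocks inductively: at stage $l$, with the previous block having maximal support $M_{l-1}$, I invoke the failure of (ii) at a threshold $j_0$ chosen so large that $j_0>2^{M_{l-1}}$, and I select the witnessing index $n_l$ so large that (after restricting each average to $\ran x_{n_l}$, which alters neither its size nor its value on $x_{n_l}$) the new block lies entirely to the right of the previous one. The bound $s(\be_q)>j_0>2^{M_{l-1}}$ is exactly what makes the concatenation very fast growing across blocks, while the support restriction makes the blocks successive.

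The \emph{main obstacle} is the single-level requirement in the last clause. Very fast growth across blocks forces the thresholds $j_0$, and hence the witness admissibility levels $j\geqslant j_0$, to tend to infinity, whereas (i) demands that every block $\{\be_q\}_{q\in F_l}$ sit at one fixed Schreier level $\Sn$. The device for reconciling this is the convolution identity $\mathcal{S}_n\ast\mathcal{S}_m=\mathcal{S}_{n+m}$: a witness that is $\Sj$-admissible splits into successive $\Sn$-admissible sub-blocks whose block-minima form an $\mathcal{S}_{j-n}$ set, and the freedom to push the witness supports arbitrarily far to the right is used to control this splitting. The delicate quantitative point, which I expect to be the crux and to follow the treatment in \cite{AM}, is to guarantee that a fixed $n$ can be maintained across all stages while the selected $\Sn$-admissible sub-block retains a definite proportion of the mass $\e$; here one exploits that high-size $\be$-averages contribute little individually, so that the total charge of a very fast growing admissible family on a single $x_k$ is controlled in the spirit of Corollary~\ref{sccnormestimate}, bounding the number of sub-blocks that carry the mass. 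The remaining bookkeeping — strict growth of sizes, spacing of supports, and passage to the final subsequence — is routine.
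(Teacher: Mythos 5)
Your direction (ii)$\Rightarrow$(i) is correct and complete. Be aware, though, that the paper itself contains no proof of Proposition \ref{bindexequivalent}: it is quoted from \cite{AM} (Proposition 3.3 there). So the point at which you write that the crux should ``follow the treatment in \cite{AM}'' is exactly the point at which a proof had to be supplied; nothing in the present paper can be compared against it, and your write-up must stand on its own there.

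It does not, and the gap is genuine. You correctly isolate the obstacle in (i)$\Rightarrow$(ii): forcing the concatenation to be very fast growing drives the thresholds $j_0$, hence the admissibility levels $j\geqslant j_0$ of the witnessing families, to infinity, while $\bdx>0$ requires one \emph{fixed} level $n$ for all blocks together with a \emph{uniform} lower bound $\de>0$ on the masses. But the mechanism you propose to resolve this fails for $\be$-averages. Your key claim, that ``high-size $\be$-averages contribute little individually,'' is false: a $\be$-average is $\be=\frac{1}{s}\sum_{j=1}^{d}f_j$ with $d\leqslant s$, where the $f_j$ are type II functionals with pairwise disjoint weights but, in contrast to the constituents of an $\al$-average, are \emph{not} required to be successive. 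Hence all $d$ of them may act on a given vector $x$, and $|\be(x)|$ can be of order $\|x\|$ no matter how large $s$ is. The only smallness the paper ever establishes for large-size $\be$-averages is on \emph{exact vectors} (Lemma \ref{exactvectorbestimate}), where it comes from the disjointness of weights combined with the coding function; no such principle is available for the arbitrary block sequence $\{x_k\}_k$ of the present proposition. Likewise, Corollary \ref{sccnormestimate} concerns norms of special convex combinations and says nothing about how the mass $\sum_q|\be_q(x_k)|$ of an admissible family distributes over its $\Sn$-admissible subfamilies. Without an individual-smallness principle, your extraction step is not merely unproven but fails as a combinatorial statement: the splitting $\mathcal{S}_{j_l}=\mathcal{S}_{j_l-n}\ast\Sn$ produces an unbounded number of $\Sn$-admissible blocks, and when $\#\supp x_{k_l}$ is sufficiently large relative to $\min\supp x_{k_l}$ the mass $\e$ can be spread across Schreier scales so that, for the fixed $n$, \emph{every} $\Sn$-admissible subfamily carries only a vanishing fraction of it; a pigeonhole over the blocks then loses exactly the uniform $\de>0$ you need. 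So the argument as outlined cannot close, and the missing idea --- how to pass from witnessing families at escaping levels to a single fixed level with uniformly large mass --- is precisely the content of the cited result, not a piece of routine bookkeeping.
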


The next Proposition is similar to Proposition 3.5 from \cite{AM}.

\begin{prp}
Let $\{x_k\}_k$ be a seminormalized block sequence in $\X$, such
that either $\adx > 0$, or $\bdx > 0$. Then there exists a
subsequence $\{x_{n_k}\}_k$ of $\{x_k\}_{k\inn}$, that generates
an $\ell_1^n$ spreading model, for every $n\inn$.

In particular, there exists $\theta>0$ such that for any
$k_0,n\inn$, there exists  a $(C,\theta,n)$ vector $x$ supported by
$\{x_k\}_k$ with $\min\supp x\geqslant k_0$, where $C = \sup_k\|x_k\|$.

If moreover $\{x_k\}_k$ is $(C^\prime,\{n_k\})\;\al$-RIS, then for every
$n,k_0\inn$ there exists  a $(C^{\prime\prime},\theta,n)$ exact vector $x$
supported by $\{x_k\}_k$ with $\min\supp x\geqslant
k_0$, where $C^{\prime\prime} = \max\{C, C^\prime\}$.\label{ell1nandexactvectorexist}
\end{prp}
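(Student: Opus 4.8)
The plan is to establish the three conclusions in order, deriving each from the previous one. The key quantity is the index hypothesis: either $\adx > 0$ or $\bdx > 0$. Both cases are symmetric in structure, so I would treat the $\al$-case in detail and indicate that the $\be$-case follows by replacing $\al$-averages with $\be$-averages throughout. The heart of the matter is the first assertion: producing a subsequence generating $\ell_1^n$ spreading models for every $n$, because the existence of $(C,\theta,n)$ vectors and then of exact vectors are obtained by unwinding the witnessing data from that construction.

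For the first assertion, since $\adx > 0$ means the defining condition fails, I would first negate the definition of $\adx = 0$ to extract, for some fixed $n_0\inn$ and some $\e_0 > 0$, a very fast growing sequence $\{\al_q\}_q$ of $\al$-averages, an increasing sequence $\{F_k\}_k$ with $\{\al_q\}_{q\in F_k}$ being $\Sn[n_0]$-admissible, and a subsequence along which $\sum_{q\in F_k}|\al_q(x_{n_k})|$ stays bounded away from $0$. The plan is then to fix any target $n\inn$ and, passing to a further subsequence, build a normalized block functional of type I$_\al$ that acts like the $\ell_1^n$-norming functional on successive blocks. Concretely, I would group the indices into $n$ consecutive bunches, use the admissibility to combine the relevant $\al_q$'s into a single type I$_\al$ functional of controlled weight via the $(\frac{1}{2^m},\Sn[m],\al)$ operation from clause 2 of the norming set, and check that this functional gives a uniformly positive lower bound on $\|\sum_{i} x_{k_i}\|$ while the upper bound is automatic. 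Taking sign-adjusted averages should yield the $\ell_1^n$ lower $\ell_1$-estimate; the upper estimate is free since the basis is $1$-unconditional in the relevant sense.

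For the second assertion, having the $\ell_1^n$ spreading model structure with constant $\theta$, I would construct a $(C,\theta,n)$ vector directly from the definition. Given $k_0, n$, I apply Proposition~\ref{sccexistence} to obtain a $(n,\e)$ basic s.c.c. on a subsequence of $\{x_k\}_k$ with indices beyond $k_0$ and with $\e < \frac{1}{32C2^{3n}}$, arrange $\min\supp \geqslant 8C2^{2n}$ by starting far enough out, set $x = 2^n\sum c_k x_k$, and verify $\|x\|\geqslant\theta$ using the $\ell_1^n$ lower estimate established above (the normalization $2^n$ compensates for the $\frac{1}{2^n}$ factor that an s.c.c. would otherwise contribute). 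The third assertion refines this: if $\{x_k\}_k$ is $(C,\{n_k\})\;\al$-RIS, then the same s.c.c. built on a subsequence inherits the $\al$-RIS property, so choosing the subsequence indices so that $n_1 > 2^{2n}$ makes $x$ a $(C,\theta,n)$ \emph{exact} vector by definition.

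The main obstacle I anticipate is the first assertion, specifically extracting a genuine $\ell_1^n$ lower bound rather than merely a nonzero index value. The negation of $\adx = 0$ only guarantees that $\sum_{q\in F_k}|\al_q(x_{n_k})|$ is bounded below along one subsequence for one fixed admissibility level $n_0$; converting this into an $\ell_1^n$ estimate for \emph{every} $n$ requires combining many such blocks admissibly while keeping the resulting functional inside $W$, which forces careful bookkeeping of weights and very-fast-growth so that the $(\frac{1}{2^m},\Sn[m],\al)$ operation applies. I would lean on the equivalent reformulation in Proposition~\ref{aindexequivalent}, which gives quantifier control over $j$, $k_j$ and the sizes $s(\al_q)$, to supply the flexibility needed to spread the averages across $n$ widely separated blocks of the subsequence. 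Since this argument is stated to parallel Proposition~3.5 of \cite{AM}, I expect the technical estimates to follow the template there, with the norm lower bound ultimately coming from testing against the single type I$_\al$ functional assembled from the $\al$-averages.
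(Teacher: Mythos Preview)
Your proposal is correct and follows the same approach as the one underlying this result (the paper omits the proof, referring to Proposition~3.5 of \cite{AM}, whose argument is exactly the one you sketch): negate the index condition to obtain $n_0$, $\e>0$, a very fast growing family $\{\al_q\}_q$, sets $F_k$ with $\{\al_q\}_{q\in F_k}$ $\mathcal{S}_{n_0}$-admissible, and a subsequence along which $\sum_{q\in F_k}|\al_q(x_{m_k})|>\e$; restrict the $\al_q$'s to $\ran x_{m_k}$, combine the blocks via $\mathcal{S}_n\ast\mathcal{S}_{n_0}=\mathcal{S}_{n+n_0}$ to produce a type I$_\al$ functional of weight $n+n_0$, and read off the lower $\ell_1$-estimate with constant $\theta=\e/2^{n_0}$ after the $2^n$ normalization; the $(C,\theta,n)$ vectors and exact vectors then follow exactly as you describe via Proposition~\ref{sccexistence}. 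One small correction: the upper $\ell_1$-estimate is immediate from the triangle inequality and boundedness of $\{x_k\}_k$, not from any unconditionality of the basis (which is only bimonotone here).
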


The proof of the following lemma, is identical to Lemma 3.7 from \cite{AM}.

\begin{lem} Let $x = 2^n\sum_{k=1}^mc_kx_k$ be a $(C,\theta,n)$ vector in $\X$. Let also $\al$ be
an $\al$-average and set $G_\al = \{k: \ran\al\cap\ran
x_k\neq\varnothing\}$. Then the following holds.
\begin{equation}
|\al(x)| < \min\big\{\frac{C2^n}{s(\al)}\sum_{k\in G_\al}c_k,\;
\frac{6C}{s(\al)}\sum_{k\in G_\al}c_k +
\frac{1}{3\cdot2^{2n}}\big\} + 2C2^n\max\{c_k: k\in G_\al\}
\end{equation}
\end{lem}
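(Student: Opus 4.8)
The plan is to write $\al = \tfrac{1}{s(\al)}\sum_{j=1}^d f_j$ with $f_1<\cdots<f_d\in W$ and $d\leqslant s(\al)$, and to start from $|\al(x)| = 2^n\big|\sum_{k\in G_\al}c_k\al(x_k)\big| \leqslant 2^n\sum_{k\in G_\al}c_k|\al(x_k)|$, the restriction to $G_\al$ being justified since $\al(x_k)=0$ for $k\notin G_\al$. For $k\in G_\al$ set $J_k = \{j:\ran f_j\cap\ran x_k\neq\varnothing\}$ and for each $j$ set $K_j=\{k\in G_\al:\ran f_j\cap\ran x_k\neq\varnothing\}$; both are intervals of indices because the $f_j$ and the $x_k$ are successively supported. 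The single combinatorial fact I would isolate first is that the incidence count of two interleaved successive families obeys $\sum_{k\in G_\al}|J_k| \leqslant \#G_\al + d-1$, equivalently $\sum_{k\in G_\al}(|J_k|-1)\leqslant d-1$: an index $k$ with $|J_k|\geqslant 2$ forces $\ran x_k$ to contain a transition point between two consecutive $f_j$'s, and there are only $d-1$ such transitions, each lying in at most one $\ran x_k$. This bookkeeping is exactly what converts all double-counting into the single $\max$-term on the right-hand side.

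For the first alternative of the minimum I would split $G_\al=G^{(1)}\cup G^{(\geqslant 2)}$ according to whether $|J_k|=1$ or $|J_k|\geqslant 2$. If $|J_k|=1$ then $\al(x_k)=\tfrac{1}{s(\al)}f_j(x_k)$ for the unique $j\in J_k$, so $|\al(x_k)|\leqslant\tfrac{1}{s(\al)}\|x_k\|\leqslant\tfrac{C}{s(\al)}$, and summing yields at most $\tfrac{C2^n}{s(\al)}\sum_{k\in G_\al}c_k$. For $k\in G^{(\geqslant 2)}$ I would use the crude estimate $|\al(x_k)|\leqslant\tfrac{1}{s(\al)}\sum_{j\in J_k}|f_j(x_k)|\leqslant\tfrac{|J_k|C}{s(\al)}$, together with $\sum_{k\in G^{(\geqslant 2)}}|J_k| = \#G^{(\geqslant 2)}+\sum_{k}(|J_k|-1)\leqslant 2(d-1)\leqslant 2s(\al)$ from the combinatorial fact; this bounds the $G^{(\geqslant 2)}$ contribution by $2C2^n\max\{c_k:k\in G_\al\}$. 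Adding the two pieces produces the $\tfrac{C2^n}{s(\al)}\sum_{k\in G_\al}c_k$ branch of the estimate.

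For the second alternative I would instead group by $j$, writing $\al(x)=\tfrac{2^n}{s(\al)}\sum_{j=1}^d f_j\big(\sum_{k\in K_j}c_kx_k\big)$, and for each fixed $j$ apply Corollary \ref{sccnormestimate} to the subsum $\sum_{k\in K_j}c_kx_k$ of the $(n,\e)$ s.c.c. (after normalizing the blocks by $C$), getting $|f_j(\sum_{k\in K_j}c_kx_k)|\leqslant\tfrac{6C}{2^n}\sum_{k\in K_j}c_k+12C\e$. Summing over $j$ and using $\sum_j\sum_{k\in K_j}c_k=\sum_kc_k|J_k|\leqslant\sum_{k\in G_\al}c_k+(d-1)\max_kc_k$ gives a main term $\tfrac{6C}{s(\al)}\sum_{k\in G_\al}c_k$ and an overlap term of order $C\max\{c_k\}$, which is absorbed into $2C2^n\max\{c_k:k\in G_\al\}$; the accumulated error is $\tfrac{2^n}{s(\al)}\cdot 12Cd\e\leqslant 12C2^n\e$, and $d\leqslant s(\al)$ together with $\e<\tfrac{1}{32C2^{3n}}$ pushes this below $\tfrac{1}{3\cdot 2^{2n}}$.

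The main obstacle, and the step I would treat most carefully, is precisely this overlap control: summing either per-vector (each $x_k$ counted $|J_k|$ times) or per-functional naively double-counts every block that straddles a transition between consecutive $f_j$'s, and without $\sum_k(|J_k|-1)\leqslant d-1$ one picks up an error growing with $s(\al)$ instead of the clean $\max$-term. I would also verify the final constants explicitly, since the coarse bound $12C2^n\e$ lands near $\tfrac{3}{8\cdot 2^{2n}}$, which is slightly above the target $\tfrac{1}{3\cdot 2^{2n}}$; closing this gap is where I expect to need the strictness of $\e<\tfrac{1}{32C2^{3n}}$, the genuine inequality $d<s(\al)$, or a marginally sharper treatment of the two extreme vectors $x_{\min G_\al}$ and $x_{\max G_\al}$ to land below the stated threshold.
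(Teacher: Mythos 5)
Your overall route is the same as the one the paper relies on (no proof is given here; the text defers to Lemma 3.6 of \cite{AM}, whose argument runs exactly along your lines): the transition-point count $\sum_{k}(|J_k|-1)\leqslant d-1$, the trivial bounds $|f_j(x_k)|\leqslant\|x_k\|\leqslant C$ for the first alternative of the minimum, and block-by-functional applications of Corollary \ref{sccnormestimate} for the second. Your first alternative is correct and complete. In the second alternative, however, you keep the straddling blocks (those with $|J_k|\geqslant 2$) inside the grouping by $j$, which creates the overlap term $\frac{6C(d-1)}{s(\al)}\max\{c_k: k\in G_\al\}$; since $\frac{d-1}{s(\al)}$ can be close to $1$, this term can approach $6C\max\{c_k: k\in G_\al\}$, and it is absorbed by $2C2^n\max\{c_k: k\in G_\al\}$ only when $n\geqslant 2$; for $n=1$ your absorption fails. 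The repair is to make the same split in both alternatives: dispose of $G^{(\geqslant 2)}$ once and for all by the trivial estimate (its contribution is at most $\frac{C2^n}{s(\al)}\cdot 2(d-1)\max\{c_k: k\in G_\al\}< 2C2^n\max\{c_k: k\in G_\al\}$, valid for every $n$), and only then group the remaining blocks, which satisfy $|J_k|=1$, by $j$; the sets $K_j\cap G^{(1)}$ are then pairwise disjoint, so $\sum_j\sum_{k\in K_j\cap G^{(1)}}c_k\leqslant\sum_{k\in G_\al}c_k$ and no overlap term arises at all.

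The more serious issue is the one you flag at the end but cannot close by the means you suggest. The accumulated error from up to $d$ applications of Corollary \ref{sccnormestimate} is $\frac{2^n}{s(\al)}\, d\cdot 12C\e\leqslant 12C2^n\e$, and with the paper's hypothesis $\e<\frac{1}{32C2^{3n}}$ this only gives $\frac{12}{32\cdot 2^{2n}}=\frac{3}{8\cdot 2^{2n}}$, which is strictly larger than the stated $\frac{1}{3\cdot 2^{2n}}$. None of your three escape routes works: the definition of an average permits $d=s(\al)$, so there is no genuine inequality $d<s(\al)$; strictness of the bound on $\e$ only makes the inequality at $\frac{3}{8\cdot 2^{2n}}$ strict, since $\e$ may be arbitrarily close to $\frac{1}{32C2^{3n}}$; and the $d$ error terms are spread over all the $f_j$, not concentrated at the two extreme blocks, so no special treatment of $x_{\min G_\al}$, $x_{\max G_\al}$ helps. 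With the constants as this paper states them (the $\frac{1}{32}$ in the definition of a $(C,\theta,n)$ vector and the $12\e$ in Corollary \ref{sccnormestimate}) I see no arrangement of this argument that produces $\frac{1}{3\cdot 2^{2n}}$; that constant would follow verbatim if the definition required $\e<\frac{1}{36C2^{3n}}$, so this appears to be a constant mismatch inherited in transcription from \cite{AM} rather than something sharper bookkeeping recovers. In short: reorganized as above, your argument proves the lemma with $\frac{3}{8\cdot 2^{2n}}$ in place of $\frac{1}{3\cdot 2^{2n}}$ (presumably harmless for the later applications, which have $\frac{1}{2^n}$-type slack), but it does not establish the constant as stated, and you should say so rather than hope the gap closes.
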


The next lemma is proven in \cite{AM}, Lemma 3.8.

\begin{lem}
Let $x$ be a $(C,\theta,n)$ vector in $\X$. Let also
$\{\al_q\}_{q=1}^d$ be a very fast growing and $\Sj$-admissible
sequence of $\al$-averages with $j<n$. Then the following holds.

\begin{equation}
\sum_{q=1}^d|\al_q(x)| < \frac{6C}{s(\al_1)} + \frac{1}{2^n}
\end{equation}\label{exactvectoraestimate}
\end{lem}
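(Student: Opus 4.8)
The plan is to reduce everything to the single-average estimate of the preceding lemma and then sum the resulting bounds, using the very fast growing condition to make all but the first contribution negligible. Write $x = 2^n\sum_{k=1}^m c_k x_k$, where $\sum_{k=1}^m c_k x_k$ is a $(n,\e)$ s.c.c. with $\e < \frac{1}{32C2^{3n}}$ and $\min\supp x_1 \geqslant 8C2^{2n}$, and for each $q$ set $G_q = \{k : \ran\al_q\cap\ran x_k\neq\varnothing\}$. Only those $\al_q$ with $\ran\al_q\cap\ran x\neq\varnothing$ contribute; let $q_0$ be the least such index. Since the supports increase, the contributing indices are exactly $q_0,\ldots,d'$ for some $d'\leqslant d$, and for $q>q_0$ one has $\min\supp\al_q>\max\supp\al_{q-1}\geqslant\max\supp\al_{q_0}\geqslant\min\supp x_1\geqslant 8C2^{2n}$.

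First I would treat the initial average $\al_{q_0}$ using the second branch of the minimum in the preceding lemma: since $\sum_{k\in G_{q_0}}c_k\leqslant 1$ and $s(\al_{q_0})\geqslant s(\al_1)$ (sizes are nondecreasing along a very fast growing sequence), this yields $|\al_{q_0}(x)| < \frac{6C}{s(\al_1)} + \frac{1}{3\cdot 2^{2n}} + 2C2^n\max\{c_k : k\in G_{q_0}\}$, isolating the main term. For each $q$ with $q_0<q\leqslant d'$ I would instead use the first branch, $|\al_q(x)| < \frac{C2^n}{s(\al_q)}\sum_{k\in G_q}c_k + 2C2^n\max\{c_k:k\in G_q\}$. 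Here the very fast growing condition $s(\al_q)>2^{\max\supp\al_{q-1}}$ forces the size terms to decay geometrically: since $\max\supp\al_{q-1}$ is strictly increasing and bounded below by $8C2^{2n}$,
\begin{equation*}
\sum_{q>q_0}\frac{C2^n}{s(\al_q)}\sum_{k\in G_q}c_k \leqslant C2^n\sum_{q>q_0}\frac{1}{2^{\max\supp\al_{q-1}}} \leqslant C2^n\cdot\frac{2}{2^{8C2^{2n}}},
\end{equation*}
which is far smaller than $\frac{1}{2^n}$.

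It remains to control the accumulated error $\sum_{q_0\leqslant q\leqslant d'}2C2^n\max\{c_k:k\in G_q\}$ together with the single term $\frac{1}{3\cdot 2^{2n}}$; this is the main obstacle. Each individual maximum satisfies $\max\{c_k\}<\e$ by property (ii) of the $(n,\e)$ s.c.c. applied to a singleton, but a termwise bound over all contributing $q$ is too weak. The decisive point is to package these maxima into a single Schreier set: for each contributing $q$ choose $k_q^*\in G_q$ attaining the maximum and argue that $\{\min\supp x_{k_q^*}\}_q\in\mathcal{S}_{n-1}$. This is exactly where the hypothesis $j<n$ enters, through $\{\min\supp\al_q\}_q\in\Sj\subseteq\mathcal{S}_{n-1}$: the positions $\min\supp x_{k_q^*}$ interlace the increasing sequence $\{\min\supp\al_q\}_q$, so by the spreading property of the Schreier families $\{\min\supp x_{k_q^*}\}_q$ is $\mathcal{S}_{n-1}$-admissible, each coordinate being repeated only through the at most two boundary overlaps an $x_k$ can have with the disjointly supported averages. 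Property (ii) of the s.c.c. then gives $\sum_q c_{k_q^*}<2\e$, whence $\sum_q 2C2^n\max\{c_k:k\in G_q\} < 4C2^n\e < \frac{1}{8\cdot2^{2n}}$. Combining the three estimates, the total is at most $\frac{6C}{s(\al_1)} + \frac{1}{3\cdot2^{2n}} + \frac{1}{8\cdot2^{2n}} + C2^n\cdot\frac{2}{2^{8C2^{2n}}} < \frac{6C}{s(\al_1)} + \frac{1}{2^n}$, as required. All the delicate bookkeeping lies in the $\mathcal{S}_{n-1}$-admissibility of the maximizing coordinates; the rest is a direct application of the preceding lemma and the geometric decay afforded by the very fast growing hypothesis.
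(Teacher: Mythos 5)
Your overall skeleton (isolating the first contributing average $\al_{q_0}$ via the second branch of the minimum in the preceding lemma, using the first branch afterwards, and killing the size terms with the very fast growing condition) is the natural route; note the paper itself offers no argument to compare against, since it defers the proof to \cite{AM}, Lemma 3.7. However, your treatment of the accumulated error $\sum_q 2C2^n\max\{c_k:k\in G_q\}$ contains a genuine gap. The claim that in the multiset $\{k_q^*\}_q$ each coordinate is ``repeated only through the at most two boundary overlaps an $x_k$ can have with the disjointly supported averages'' is false: the bound ``at most two'' limits the straddling vectors of a single average, not the number of averages a single $x_k$ can meet. A vector $x_k$ whose range contains many of the disjointly supported $\al_q$'s satisfies $G_q=\{k\}$ for every such $q$, so $k_q^*=k$ is forced for each of them and $c_k$ recurs once per such $q$, with no bound on the number of recurrences. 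Consequently $\sum_q c_{k_q^*}$ need not be $<2\e$, and for a ``run'' of $t$ averages inside one $\ran x_k$ your termwise bounds add up to roughly $2tC2^nc_k$, which is unbounded in $t$. This is not repairable bookkeeping within your scheme: no termwise application of the preceding lemma can close it, because for long runs the termwise bounds genuinely overshoot the true value of $\sum_{q}|\al_q(x)|$.

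The missing mechanism, and the place where the hypotheses $j<n$ and $\e<\frac{1}{32C2^{3n}}$ are really spent, is a grouped estimate for runs: the averages of a run form a very fast growing, $\Sj$-admissible family, so with appropriate signs $\frac{1}{2^j}\sum_{q\in\mathrm{run}}\pm\al_q$ belongs to $W$ by the $(\frac{1}{2^j},\Sj,\al)$ operation, whence $\sum_{q\in\mathrm{run}}|\al_q(x_k)|\leqslant 2^j\|x_k\|\leqslant 2^{n-1}C$; the run then contributes at most $2^nc_k\cdot 2^{n-1}C=2^{2n-1}Cc_k$ to the total, and it is exactly the factor $2^{3n}$ in the bound on $\e$ that absorbs the extra $2^{2n}$. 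One then sums over the distinct $k$'s carrying runs, and over the genuinely boundary maxima, using the s.c.c.\ property. Here your spreading argument also needs a correction: spreading only permits replacing elements of an $\Sj$ set by \emph{larger} ones, whereas a straddling $x_{k_q^*}$ has $\min\supp x_{k_q^*}<\min\supp\al_q$. The standard fix is to drop the first relevant index: each remaining $\min\supp x_k$ dominates a point of $\{\min\supp\al_q\}_q$ lying in the preceding interval, so all but the first of these minima form a set in $\Sj\subseteq\mathcal{S}_{n-1}$, at the cost of one extra term $\max_k c_k<\e$ --- which is the legitimate source of the factor $2$ you invoked.
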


The following corollary is an immediate consequence of Lemma \ref{exactvectoraestimate} and it is  similar with Proposition 3.10 from \cite{AM}.

\begin{cor} Let $x$ be a $(C,\theta,n)$ vector in $\X$. Let also $f$ be a functional of type I$_\al$ in $W$
with $w(f) = j <n$. Then the following holds
\begin{equation}
|f(x)| < \frac{6C + 1/2^n}{2^j}
\end{equation}\label{vectorris}
\end{cor}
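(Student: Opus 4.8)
The goal is to bound the action of a type $\mathrm{I}_\al$ functional $f$ of weight $j<n$ on a $(C,\theta,n)$ vector $x$, obtaining the estimate $|f(x)| < (6C+1/2^n)/2^j$. The plan is to unwind the definition of a type $\mathrm{I}_\al$ functional and reduce the problem to the already-established estimate of Lemma \ref{exactvectoraestimate}, which controls the total mass of a very fast growing $\Sj$-admissible sequence of $\al$-averages acting on $x$.

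First I would recall that, by property \textbf{2} of the norming set, a type $\mathrm{I}_\al$ functional $f$ of weight $w(f)=j$ has the form $f = \plusminus E\big(\frac{1}{2^j}\sum_{q=1}^d \al_q\big)$, where $\{\al_q\}_{q=1}^d$ is an $\Sj$-admissible and very fast growing sequence of $\al$-averages in $W$ and $E$ is an interval of the naturals. Since the sign and the interval projection $E$ only restrict the supports and can at worst preserve or shrink the ranges involved, I would observe that $|f(x)| \leqslant \frac{1}{2^j}\sum_{q=1}^d|\al_q(Ex)|$, and that replacing $x$ by $Ex$ does not increase any of the relevant quantities; so it suffices to bound $\frac{1}{2^j}\sum_{q=1}^d|\al_q(x)|$. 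The essential point is that $\{\al_q\}_{q=1}^d$ is exactly a very fast growing, $\Sj$-admissible sequence of $\al$-averages with $j<n$, which is precisely the hypothesis of Lemma \ref{exactvectoraestimate}.

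Applying Lemma \ref{exactvectoraestimate} directly gives $\sum_{q=1}^d|\al_q(x)| < \frac{6C}{s(\al_1)} + \frac{1}{2^n}$. Since each $\al$-average has size $s(\al_1)\geqslant 1$, we obtain $\sum_{q=1}^d|\al_q(x)| < 6C + \frac{1}{2^n}$, and dividing by $2^j$ yields the desired bound $|f(x)| < (6C + 1/2^n)/2^j$. The only care needed is to confirm that the interval restriction $E$ genuinely does not interfere: the averages $\al_q$ appearing in $f$ are the same averages, merely restricted to $E$, and Lemma \ref{exactvectoraestimate} applies to the restricted sequence as well since restriction to an interval preserves the very fast growing and admissibility properties (or one absorbs $E$ into $x$).

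The main obstacle, such as it is, is purely bookkeeping: one must make sure the sign and the interval projection in the definition of a type $\mathrm{I}_\al$ functional are handled cleanly so that the hypotheses of Lemma \ref{exactvectoraestimate} are met verbatim, and that the crude bound $s(\al_1)\geqslant 1$ is legitimate (indeed $s(\al_1)\inn$). No genuinely new estimate is required; the corollary is, as stated in the excerpt, an immediate consequence of the preceding lemma, so the entire argument amounts to recognizing $f$ as a rescaled admissible sum of $\al$-averages and invoking the lemma.
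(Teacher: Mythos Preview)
Your proof is correct and is precisely the argument the paper has in mind: the corollary is stated as an immediate consequence of Lemma~\ref{exactvectoraestimate}, and your write-up is exactly the one-line unpacking of that lemma (express $f$ as $\plusminus E\big(\tfrac{1}{2^j}\sum_q\al_q\big)$, apply the lemma, and use $s(\al_1)\geqslant 1$). Your handling of the interval restriction $E$ is also the right way to see that nothing is lost.
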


Combining Lemma \ref{exactvectoraestimate} with Corollary \ref{vectorris} we conclude the following.

\begin{cor}
Let $\{x_k\}_k$ be a block sequence in $\X$, such that $x_k$ is a
$(C,\theta,n_k)$ vector and $\{n_k\}_k$ is strictly
increasing. Then $\adx = 0$. Moreover, passing if necessary to a subsequence, $\{x_k\}_k$ is $(7C, \{n_k\}_k)\;\al$-RIS.\label{corexactvetorazero}
\end{cor}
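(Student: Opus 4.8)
The plan is to derive both assertions directly from the two quoted estimates, with essentially no extra work beyond unwinding the definitions and one inductive subsequence extraction.

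First I would establish $\adx = 0$ from Lemma~\ref{exactvectoraestimate}. Fix $\ell\inn$, a very fast growing sequence $\{\al_q\}_q$ of $\al$-averages in $W$, and an increasing sequence $\{F_k\}_k$ of finite subsets of $\mathbb{N}$ with $\{\al_q\}_{q\in F_k}$ being $\mathcal{S}_\ell$-admissible; let $\{x_{l_k}\}_k$ be an arbitrary subsequence of $\{x_k\}_k$. Since $\{n_k\}_k$ is strictly increasing, $n_{l_k}\to\infty$, so for all large $k$ we have $\ell<n_{l_k}$. For such $k$ the sequence $\{\al_q\}_{q\in F_k}$ is a finite very fast growing $\mathcal{S}_\ell$-admissible sequence of $\al$-averages, and $x_{l_k}$ is a $(C,\theta,n_{l_k})$ vector with $\ell<n_{l_k}$; hence Lemma~\ref{exactvectoraestimate} applies and gives
\begin{equation*}
\sum_{q\in F_k}|\al_q(x_{l_k})| < \frac{6C}{s(\al_{\min F_k})} + \frac{1}{2^{n_{l_k}}}.
\end{equation*}
I would then observe that both summands tend to $0$: the second because $n_{l_k}\to\infty$, and the first because $\min F_k\to\infty$ forces $s(\al_{\min F_k})\to\infty$ (the sizes of a very fast growing sequence are strictly increasing, hence unbounded). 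This yields $\lim_k\sum_{q\in F_k}|\al_q(x_{l_k})| = 0$, which is exactly the defining condition of $\adx = 0$.

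Next I would verify the $\al$-RIS conditions, passing to a subsequence only to control supports. Condition~(ii) of the definition of a $(7C,\{n_k\}_k)\;\al$-RIS is immediate from Corollary~\ref{vectorris}: if $f$ is of type~I$_\al$ with $w(f)=n<n_k$, then $|f(x_k)| < (6C+2^{-n_k})2^{-n} < (6C+1)2^{-n} \leqslant 7C\cdot 2^{-n}$, the last inequality because $C\geqslant 1$. The norm bound in condition~(i), $\|x_k\|\leqslant 7C$, holds for every $k$ by Remark~\ref{exactvectornorm}. Both properties survive passage to any subsequence, so it remains only to arrange the support-growth requirement $\tfrac{1}{2^{n_{k+1}}}\max\supp x_k < \tfrac{1}{2^{n_k}}$. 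Since $n_k\to\infty$, I would extract a subsequence inductively, at each step choosing the next index $k_{j+1}$ so large that $n_{k_{j+1}} > n_{k_j} + \log_2(\max\supp x_{k_j})$; after relabeling this gives the desired inequality for all $k$ and completes the proof.

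The argument is short, and the only point requiring care is the first claim: one must read ``increasing sequence of subsets'' in the definition of the $\al$-index as meaning $F_1<F_2<\cdots$ (equivalently $\min F_k\to\infty$), since it is precisely this that forces $s(\al_{\min F_k})\to\infty$ and makes the first error term in Lemma~\ref{exactvectoraestimate} vanish; without it the estimate alone does not close. Everything else is a direct substitution into the two cited results together with the elementary bound $6C+1\leqslant 7C$ valid for $C\geqslant 1$.
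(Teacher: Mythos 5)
Your proof is correct and follows exactly the route the paper intends: the paper's entire argument is the sentence ``Combining Lemma \ref{exactvectoraestimate} with Corollary \ref{vectorris} we conclude the following,'' and you have simply filled in the details -- Lemma \ref{exactvectoraestimate} giving $\adx=0$ (with the size of the first average tending to infinity killing the first error term), Corollary \ref{vectorris} together with Remark \ref{exactvectornorm} and $C\geqslant 1$ giving the $7C$ bounds, and a subsequence extraction handling the support-growth condition.
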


\begin{ntt}
Let $x = 2^n\sum_{k=1}^mc_kx_k$ be a $(C,\theta,n)$ exact vector in $\X$, where $\{x_k\}_{k=1}^m$ is $(C,\{n_k\}_{k=1}^m)\;\al$-RIS.
Let also $f = \frac{1}{2}\sum_{q\in F}(f_q + g_q)$ be a type II$_+$
functional (or $f = \frac{1}{2}\sum_{q\in F}\la_q(f_q - g_q)$ be a type II$_-$
functional). Set $i_q = w(f_q)$ for $q\in F$ and
\begin{eqnarray*}
E_0 &=& \{q: n \leqslant i_q <2^{2n}\}\\
E_1 &=& \{q: i_q < n\}\\
E_2 &=& \{q: 2^{2n} \leqslant i_q < n_1\}\\
J_k &=& \{q: n_k\leqslant i_q <
n_{k+1}\},\;\text{for}\;k<m\;\text{and}\quad J_m = \{q:
n_m\leqslant i_q\}
\end{eqnarray*}
\end{ntt}

Note that from Remark \ref{lacunarity} either $E_0 = \varnothing$
or $\#E_0 = 1$. Under the above notation the following lemma
holds, which is similar to Lemma 3.12 from \cite{AM} and their
proofs are almost identical.

\begin{lem}
Let $x = 2^n\sum_{k=1}^mc_kx_k$ be a $(C,\theta,n)$ exact vector in $\X$, where $\{x_k\}_{k=1}^m$ is $(C,\{n_k\}_{k=1}^m)\;\al$-RIS.

Then if $f = \frac{1}{2}\sum_{q\in F}(f_q + g_q)$ is a functional of type II$_+$, there exists $F_f\subset\{k: \ran f\cap\ran x_k\neq\varnothing\}$ with $\{\min\supp x_k: k\in F_f\} \in \mathcal{S}_2$ such that

\begin{eqnarray}
f(x) &<& \frac{1}{2}\sum_{q\in E_0}(f_q + g_q)(x) + \sum_{q\in E_1}
\frac{7C}{2^{i_q}} + \sum_{k=2}^m\sum_{q\in J_k}\frac{2^{n_k}}{2^{i_q + n_{k-1}}}\nonumber\\
&+&\sum_{k=1}^{m-1}\sum_{q\in J_k}\frac{C2^n}{2^{i_q}} + \sum_{q\in
E_2}\frac{C2^n}{2^{i_q}} + C2^n\sum_{k\in F_f}c_k
\end{eqnarray}

Similarly, if $f = \frac{1}{2}\sum_{q\in F}\la_q(f_q - g_q)$ is a functional of type II$_-$, there exists $F_f\subset\{k: \ran f\cap\ran x_k\neq\varnothing\}$ with $\{\min\supp x_k: k\in F_f\} \in \mathcal{S}_2$ such that

\begin{eqnarray}
f(x) &<& \frac{1}{2}\sum_{q\in E_0}\la_q(f_q - g_q)(x) + \sum_{q\in E_1}
\frac{7C}{2^{i_q}} + \sum_{k=2}^m\sum_{q\in J_k}\frac{2^{n_k}}{2^{i_q + n_{k-1}}}\nonumber\\
&+&\sum_{k=1}^{m-1}\sum_{q\in J_k}\frac{C2^n}{2^{i_q}} + \sum_{q\in
E_2}\frac{C2^n}{2^{i_q}} + C2^n\sum_{k\in F_f}c_k
\end{eqnarray}

\end{lem}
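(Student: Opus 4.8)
The plan is to carry out the type II$_+$ case in full and deduce type II$_-$ for free: the constraint $\|\sum_{q\in F}\la_qu_q^*\|_u\leqslant 1$ forces $|\la_q|\leqslant 1$ for every $q$, whence $|\la_q(f_q-g_q)(x)|\leqslant|f_q(x)|+|g_q(x)|$, so that every bound obtained below — each phrased through $|f_q(x)|$ and $|g_q(x)|$ separately — transfers verbatim with $(f_q-g_q)$ in place of $(f_q+g_q)$. Writing $f(x)=\frac12\sum_{q\in F}\big(f_q(x)+g_q(x)\big)$, I would first partition $F$ according to the weight $i_q=w(f_q)=w(g_q)$ into the classes $E_1,E_0,E_2$ and $\{J_k\}_{k=1}^m$ exactly as in the notation introduced before the statement. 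By Remark \ref{lacunarity} the class $E_0$ has at most one element, and its contribution $\frac12\sum_{q\in E_0}(f_q+g_q)(x)$ is simply retained as the first summand.

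For $E_1$ (where $i_q<n$) I would apply Corollary \ref{vectorris} to each of $f_q,g_q$, giving $\frac12(|f_q(x)|+|g_q(x)|)<(6C+1/2^n)/2^{i_q}\leqslant 7C/2^{i_q}$ since $C\geqslant 1$; this is the second summand. For $E_2$ (where $2^{2n}<i_q<n_1$) the weight $i_q$ is strictly below the RIS parameter $n_k$ of every block, so RIS property (ii) yields $|f_q(x_k)|<C/2^{i_q}$ for all $k$; as $x=2^n\sum_kc_kx_k$ with $\sum_kc_k=1$ this gives $|f_q(x)|\leqslant C2^n/2^{i_q}$, and likewise for $g_q$, producing $\sum_{q\in E_2}C2^n/2^{i_q}$.

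The substance lies in the bands $J_k$. For $q\in J_k$ I would split the blocks of $x$ at the index $k$. For the blocks $x_j$ with $j>k$ one has $i_q<n_{k+1}\leqslant n_j$, so RIS property (ii) again applies and, after the $\frac12$ averaging, yields $\sum_{k=1}^{m-1}\sum_{q\in J_k}C2^n/2^{i_q}$ (present only for $k\leqslant m-1$, since there are no such blocks when $k=m$). For the remaining blocks $x_j$ with $j\leqslant k$, where RIS is unavailable, I would pass to the $\al$-average decomposition $f_q=E\,2^{-i_q}\sum_p\al_p$ and estimate each $\al_p(x)$ by the $\al$-average estimate above (Lemma 3.6 of \cite{AM}), summing the resulting $6C/s(\al_p)$-terms through very fast growth as in Lemma \ref{exactvectoraestimate}; interval restrictions are harmless since $Ex$ is norm-dominated by $x$. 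The support bound $\max\supp x_{k-1}<2^{n_k-n_{k-1}}$ coming from RIS property (i) is precisely what converts the boundary overlap into the small error $\sum_{k=2}^m\sum_{q\in J_k}2^{n_k}/2^{i_q+n_{k-1}}$, while the residual $2C2^n\max_j\{c_j\}$-terms from the average estimate are the contributions of the blocks on which a single average concentrates.

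These concentrated blocks are gathered into $F_f$, and I expect the main obstacle to be the verification that $\{\min\supp x_k:k\in F_f\}\in\mathcal{S}_2$. Here I would exploit the admissibility $2(\#F)\leqslant\min\supp f_{\min F}$ of the special sequence: the set $\{\min\supp f_q:q\in F\}$ then lies in $\mathcal{S}_1$, and each pair $(f_q,g_q)$ contributes at most two critical blocks, each meeting $\ran f$ and hence with $\min$-support dominating $\min\supp f_q$. Grouping the at-most-two blocks arising from each $q$ exhibits $\{\min\supp x_k:k\in F_f\}$ as an $\mathcal{S}_1$-admissible family of two-element (hence $\mathcal{S}_1$) sets, which lies in $\mathcal{S}_1*\mathcal{S}_1=\mathcal{S}_2$; the total contribution of these blocks is bounded by $C2^n\sum_{k\in F_f}c_k$. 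Assembling the six groups of terms then gives the asserted inequality, and the delicate point throughout — tracking how the $\max_j\{c_j\}$-errors from overlapping averages accumulate and confirming the $\mathcal{S}_2$-control on $F_f$ — is exactly the bookkeeping carried out in the proof of Lemma 3.13 of \cite{AM}.
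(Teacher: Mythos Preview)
Your decomposition of $F$ into $E_0,E_1,E_2,\{J_k\}_k$, the treatment of $E_0$, $E_1$ (via Corollary~\ref{vectorris}), $E_2$, and of the blocks $j>k$ inside each $J_k$ (via RIS property~(ii)), the reduction of type~II$_-$ to type~II$_+$ through $|\la_q|\leqslant 1$, and the $\mathcal{S}_1*\mathcal{S}_1=\mathcal{S}_2$ accounting for $F_f$ are all correct and match the argument in \cite{AM} to which the paper defers.

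The gap is in your handling of the critical block $x_k$ for $q\in J_k$. You propose to decompose $f_q=E\,2^{-i_q}\sum_p\al_p$ and sum the $\al$-average estimates ``as in Lemma~\ref{exactvectoraestimate}''; but that lemma needs admissibility level $j<n$, whereas here $i_q\geqslant n_k>2^{2n}>n$, so it does not apply. More concretely, you identify the $F_f$-term with the residual $2C2^n\max_jc_j$ from each $\al_p$; since there can be $\mathcal{S}_{i_q}$-many averages $\al_p$ (many of them sitting inside a single block $x_k$ and each contributing $2C2^nc_k$), these residuals neither sum to $C2^n\sum_{k\in F_f}c_k$ nor produce an $\mathcal{S}_2$-set of blocks. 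The correct move for the block $j=k$ is not to decompose $f_q$ at all: the weights $i_q$ are strictly increasing in $q$, so each $J_k$ is an interval in $F$, hence $\tfrac12\sum_{q\in J_k}(f_q+g_q)$ is an interval restriction of $f$ and therefore lies in $W$; this gives $\bigl|\tfrac12\sum_{q\in J_k}(f_q+g_q)(2^nc_kx_k)\bigr|\leqslant C2^nc_k$ \emph{collectively} over $J_k$ and contributes a single index $k$ to $F_f$. That is what keeps $\#F_f\leqslant\#F$ (up to a boundary block) and makes your $\mathcal{S}_2$ argument go through. The blocks $j<k$ are handled by the straight $\ell_\infty$ bound $\|f_q\|_\infty\leqslant 2^{-i_q}$ together with $\max\supp x_{k-1}<2^{n_k-n_{k-1}}$ and $\|2^n\sum_{j<k}c_jx_j\|_\infty<1$ (the latter from $c_j<\e$), which you correctly flagged as the source of the $2^{n_k}/2^{i_q+n_{k-1}}$ term.
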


The next corollary is similar to Corollary 3.13 from \cite{AM}.

\begin{cor} Let $x$ be a $(C,\theta,n)$ exact vector in $\X$ and $f = \frac{1}{2}\sum_{q\in F}(f_q + g_q)$ be a type II$_+$
functional (or $f = \frac{1}{2}\sum_{q\in F}\la_q(f_q -
g_q)$ be a type II$_-$ functional), such that
$\{n,\ldots,2^{2n}\}\cap\hat{w}(f)=\varnothing$. Set $i_q = w(f_q)$ for $q\in F$ and $E_1
= \{q: i_q < n\}$. Then the following holds.

\begin{equation}
|f(x)| < \sum_{q\in E_1}\frac{7C}{2^{i_q}} + \frac{2C}{2^n}
\end{equation}
\label{corexactvectorroughestimate}
\end{cor}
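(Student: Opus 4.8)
The plan is to derive Corollary~\ref{corexactvectorroughestimate} directly from the preceding Lemma, by discarding or crudely bounding the terms on the right-hand side of its conclusion, using the hypothesis $\{n,\ldots,2^{2n}\}\cap\widehat{w}(f)=\varnothing$ to kill the dangerous $E_0$ contribution. First I would observe that the hypothesis forces $i_q\notin\{n,\ldots,2^{2n}\}$ for every $q\in F$, so $E_0=\{q:n\leqslant i_q<2^{2n}\}=\varnothing$ and the leading term $\frac{1}{2}\sum_{q\in E_0}(f_q+g_q)(x)$ (respectively the type~II$_-$ analogue) vanishes outright. This is precisely the point of the hypothesis and removes the only term in the Lemma that is not already small.

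With $E_0$ gone, the work is to bound the five remaining sums by the two terms claimed, namely $\sum_{q\in E_1}\frac{7C}{2^{i_q}}$ plus a tail of size $\frac{2C}{2^n}$. The $E_1$ sum is carried over verbatim. For the four remaining sums I would estimate each against a geometric tail. Because $f$ arises from a $\beta$-average of functionals of type~II with pairwise disjoint weights, and the weights entering the three sums over $J_k$ and $E_2$ all satisfy $i_q>2^{2n}$ (they lie strictly above the forbidden window, since $i_q\notin\{n,\dots,2^{2n}\}$ and $i_q\geqslant n$ would put small weights into $E_1$), each individual term is dominated by a quantity like $C2^n/2^{i_q}$ with $i_q>2^{2n}$, hence smaller than $C2^n\cdot 2^{-2^{2n}}$. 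Summing the disjoint weights over the lacunary set $L$ (Remark~\ref{lacunarity}) produces a convergent geometric series whose total is far below $\frac{C}{2^n}$. The term $C2^n\sum_{k\in F_f}c_k$ is controlled by the special convex combination estimate: since $\{\min\supp x_k:k\in F_f\}\in\mathcal{S}_2$ and $x=2^n\sum c_kx_k$ is built from a $(n,\e)$ s.c.c., property~(ii) of the s.c.c.\ definition together with Corollary~\ref{sccnormestimate} bounds $\sum_{k\in F_f}c_k$ by something of order $\e+2^{-n}$, and multiplying by $C2^n$ (using $\e<\frac{1}{32C2^{3n}}$) keeps this well under $\frac{C}{2^n}$.

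I would then add these bookkeeping estimates: the $J_k$, $E_2$, and $F_f$ contributions together are shown to be at most $\frac{2C}{2^n}$, which absorbed with the surviving $E_1$ sum yields exactly the stated inequality $|f(x)|<\sum_{q\in E_1}\frac{7C}{2^{i_q}}+\frac{2C}{2^n}$. Finally, since $|f(x)|\leqslant|f(x)|$ for $\pm Ef$ and all the estimates are in absolute value throughout, passing from $f(x)$ to $|f(x)|$ and from the restricted functional to its interval projections is harmless.

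The main obstacle I anticipate is the careful accounting of the geometric tails: one must verify that every weight $i_q$ feeding the $J_k$, $E_2$ sums genuinely exceeds $2^{2n}$ (so that the constants $C2^n$ are overwhelmed), and that the disjointness of the weights $\widehat{w}(f_q)$ combined with the lacunarity of $L$ in Remark~\ref{lacunarity} legitimately converts a sum indexed by $q$ into a convergent geometric series in the weight values rather than in the index $q$. Getting the constant to land at exactly $\frac{2C}{2^n}$ rather than some larger multiple of $C/2^n$ is the one place where the constraints $\ell_1>9$, $n_1>2^{2n}$, and $\e<\frac{1}{32C2^{3n}}$ must all be used in concert; everything else is routine term-by-term domination inherited from the analogous Corollary~3.14 in \cite{AM}.
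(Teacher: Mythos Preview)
Your approach is correct and is exactly what the paper intends: the corollary is stated as ``similar to Corollary~3.14 from \cite{AM}'' with no proof given, and the argument there is precisely to apply the preceding lemma and absorb every term except the $E_1$ sum into a tail of size $O(C/2^n)$, using the hypothesis to force $E_0=\varnothing$.

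One correction is needed in your justification for why the weights $i_q$ are pairwise distinct. You write that ``$f$ arises from a $\beta$-average of functionals of type~II with pairwise disjoint weights,'' but this is not the case: $f$ is itself a single type~II functional, built from a special sequence $\{f_q,g_q\}_{q=1}^d$. The distinctness of the weights $i_q=w(f_q)$ comes instead from the special-sequence structure: $w(f_1)\in L_1$, while $w(f_q)=\sigma(f_1,g_1,\ldots,f_{q-1},g_{q-1})\in L_2$ for $q>1$, with $\sigma$ injective and $L_1\cap L_2=\varnothing$. This is what allows you to treat $\sum_{q}2^{-i_q}$ as a sum over distinct elements of $L_0$ and invoke the lacunarity (property~(ii) of $L_0$, namely $\sum_k 2^{-\ell_k}<\tfrac{1}{1000}$). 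The rest of your bookkeeping---in particular the $F_f$ bound via $\mathcal{S}_2\subset\mathcal{S}_{n-1}$ and $\varepsilon<\frac{1}{32C2^{3n}}$---is on target; note that the appeal to Corollary~\ref{sccnormestimate} is unnecessary there, since the s.c.c.\ definition alone gives $\sum_{k\in F_f}c_k<\varepsilon$ directly.
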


The lemma which follows is similar to Lemma 3.14 from \cite{AM}.

\begin{lem}
Let $x = 2^n\sum_{k=1}^mc_kx_k$ be a $(C,\theta,n)$ exact vector
in $\X$ and $\be$ be a $\be$-average in $W$. Then there exists
$F_\be\subset\{k: \ran \be\cap\ran x_k\neq\varnothing\}$ with
$\{\min\supp x_k: k\in F_f\} \in \mathcal{S}_2$ such that

\begin{equation}
|\be(x)| < \frac{8C}{s(\be)} + C2^n\sum_{k\in F_\be}c_k
\end{equation}\label{exactvectorbestimate}
\end{lem}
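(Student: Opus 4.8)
The plan is to write $\be = \frac{1}{s}\sum_{j=1}^{d}f_j$ with $s = s(\be)$ and $d\le s$, where each $f_j$ is a functional of type II and the weight sets $\widehat{w}(f_j)$ are pairwise disjoint, and then to estimate every $f_j(x)$ by the preceding type II estimate for exact vectors before summing, using $|\be(x)|\le\frac{1}{s}\sum_{j=1}^{d}|f_j(x)|$. The first thing I would record is the decisive structural fact that all the individual weights $i_q = w(f_q)$ occurring across all the $f_j$ are \emph{distinct} elements of the lacunary set $L = L_1\cup L_2$: within a single special sequence they are distinct because the coding function $\sigma$ is injective and $L_1\cap L_2 = \varnothing$, while across different $j$ they are disjoint by the definition of a $\be$-average.

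Next, by Remark \ref{lacunarity} we have $\#\big(L\cap\{n,\dots,2^{2n}\}\big)\le 1$, so there is at most one index $j_0$ whose weight set meets $\{n,\dots,2^{2n}\}$, and for that $j_0$ the set $E_0$ is a singleton. For every $j\neq j_0$ the hypothesis of Corollary \ref{corexactvectorroughestimate} is met, and in any case the preceding lemma bounds $f_j(x)$ by three kinds of terms: the low-weight part $\sum_{q\in E_1}\frac{7C}{2^{i_q}}$, the high-weight part coming from $E_2$ and the $J_k$'s, and the interface part $C2^{n}\sum_{k\in F_{f_j}}c_k$. I would sum the first two over all $j$ and exploit the global distinctness of the weights: by condition (ii) imposed on $L_0$ the low-weight contributions add to at most $7C\sum_{\ell\in L}2^{-\ell}<\frac{7C}{1000}$, while the high-weight contributions add, by the lacunarity of $L$ and the growth of the RIS indices $\{n_k\}$, to less than $2^{-n}$. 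Both are absolute constants, so after dividing by $s$ they produce the term $\frac{8C}{s}$.

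It then remains to treat the single surviving $E_0$-term of $f_{j_0}$ and to collect the interface terms. The $E_0$-term is $\frac{1}{2}(f_{q_0}+g_{q_0})(x)$ (or its type II$_-$ analogue $\frac{1}{2}\la_{q_0}(f_{q_0}-g_{q_0})(x)$) for a single pair $(f_{q_0},g_{q_0})$; since $\|x_k\|\le C$ for each $k$, I would bound $|f_{q_0}(x)| = 2^{n}\big|\sum_k c_k f_{q_0}(x_k)\big|\le C2^{n}\sum_{k\in G}c_k$ with $G = \{k:\ran f_{q_0}\cap\ran x_k\neq\varnothing\}$, and likewise for $g_{q_0}$. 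Gathering these overlap indices together with all the sets $F_{f_j}$ supplied by the preceding lemma into a single $F_\be\subset\{k:\ran\be\cap\ran x_k\neq\varnothing\}$, the total interface contribution is at most $C2^{n}\sum_{k\in F_\be}c_k$, the factor $1/s$ only working in our favour.

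The hard part will be the Schreier estimate $\{\min\supp x_k: k\in F_\be\}\in\mathcal{S}_2$. Here I would argue exactly as in \cite{AM}, Lemma 3.15: the admissibility constraint $2(\#F)\le\min\supp f_{\min F}$ built into each type II functional forces the interface indices contributed by a single $f_j$ to form, after passing to $\{\min\supp x_k\}$, a set in $\mathcal{S}_1$ whose minimum is at least $\min\supp f_j$, and a convolution argument based on the succession of the $f_j$ and the identity $\mathcal{S}_1*\mathcal{S}_1 = \mathcal{S}_2$ then places the whole collection in $\mathcal{S}_2$. Carrying out this bookkeeping — in particular controlling the number of interface contributions so that they fit within a single $\mathcal{S}_1$ layer — is the delicate point; the remaining estimates are the routine summations described above, after which both displayed inequalities follow at once, the type II$_+$ and type II$_-$ cases being entirely parallel.
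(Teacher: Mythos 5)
The paper offers no proof of this lemma, deferring to Lemma 3.15 of \cite{AM}, so I am judging your argument on its own terms. Your skeleton is the right one: write $\be=\frac{1}{s(\be)}\sum_{j=1}^{d}f_j$ with the $f_j$ of type II and pairwise disjoint weight sets, feed each $f_j$ into the preceding type II estimate, and sum the low- and high-weight contributions \emph{globally}, using the fact that all the weights occurring are distinct members of the lacunary set $L$ together with Remark \ref{lacunarity}, which leaves at most one exceptional pair $(f_{q_0},g_{q_0})$ with weight in $\{n,\ldots,2^{2n}\}$ across the entire $\be$-average. All of that is correct.

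The genuine gap is your treatment of that exceptional $E_0$-term. You bound $|f_{q_0}(x)|\leqslant C2^{n}\sum_{k\in G}c_k$ with $G=\{k:\ran f_{q_0}\cap\ran x_k\neq\varnothing\}$ and then absorb $G$ into $F_\be$. But $f_{q_0}$ is a type I$_\al$ functional of weight at least $n$, and nothing prevents its range from covering the whole special convex combination (in the extreme case $w(f_{q_0})=n$ and $f_{q_0}$ is essentially the norming functional of $x$, so $G=\{1,\ldots,m\}$). Then $\{\min\supp x_k:k\in F_\be\}$ is a maximal $\mathcal{S}_n$-set rather than an $\mathcal{S}_2$-set, and the conclusion you must prove fails for your choice of $F_\be$. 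This is not a cosmetic defect: the $\mathcal{S}_2$ condition is the whole point of the lemma, since it is what later yields $\sum_{k\in F_\be}c_k<\e$ via the basic s.c.c.\ property and makes the interface term negligible. The exceptional term has to be handled crudely instead: $\tfrac12|(f_{q_0}+g_{q_0})(x)|\leqslant\|x\|<7C$ by Remark \ref{exactvectornorm} (and likewise in the II$_-$ case, since $|\la_{q_0}|\leqslant 1$), and this single $7C$, divided by $s(\be)$, is precisely what produces the constant $8$ in $\frac{8C}{s(\be)}$. Indeed, in your own accounting the $/s(\be)$ term only ever collects about $\frac{7C}{1000}+2^{-n}$, which is a telltale sign that the dominant contribution has been misallocated. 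A secondary point you should not wave away: the union $\bigcup_jF_{f_j}$ of up to $d$ sets, each only known to give an $\mathcal{S}_2$-set of minima, is a priori only in $\mathcal{S}_3$; putting it in $\mathcal{S}_2$ requires the finer structure of each $F_{f_j}$ (one initial index followed by an $\mathcal{S}_1$-set, lined up by the constraint $2(\#F)\leqslant\min\supp f_{\min F}$), which is exactly the bookkeeping you postpone.
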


The next lemma is similar to Lemma 3.15 from \cite{AM}.

\begin{lem}
Let $x$ be a $(C,\theta,n)$ exact vector in $\X$ and
$\{\be\}_{q=1}^d$ be a very fast growing and
$\Sj$-admissible sequence of $\be$-averages with
$j\leqslant n-3$. Then the following holds

\begin{equation}
\sum_{q=1}^d|\be_q(x)| < \sum_{q=1}^d\frac{8C}{s(\be_q)} +
\frac{1}{2^n}
\end{equation}
If moreover $s(\be_1)\geqslant \min\supp x$, then
$\sum_{q=1}^d|\be_q(x)| < \frac{2}{2^n}$

\end{lem}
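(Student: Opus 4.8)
The plan is to prove the two displayed estimates for a $\be$-average by first reducing everything to the single-average estimate of Lemma~\ref{exactvectorbestimate} and then exploiting the very fast growing, $\Sj$-admissible structure of the sequence $\{\be_q\}_{q=1}^d$ to control the error terms $C2^n\sum_{k\in F_{\be_q}}c_k$ through the special convex combination hypothesis. First I would apply Lemma~\ref{exactvectorbestimate} to each $\be_q$ separately, obtaining for every $q$ a set $F_{\be_q}\subset\{k:\ran\be_q\cap\ran x_k\neq\varnothing\}$ with $\{\min\supp x_k:k\in F_{\be_q}\}\in\mathcal{S}_2$ and
\begin{equation*}
|\be_q(x)| < \frac{8C}{s(\be_q)} + C2^n\sum_{k\in F_{\be_q}}c_k.
\end{equation*}
Summing over $q=1,\ldots,d$ immediately yields $\sum_{q=1}^d|\be_q(x)| < \sum_{q=1}^d\frac{8C}{s(\be_q)} + C2^n\sum_{q=1}^d\sum_{k\in F_{\be_q}}c_k$, so the whole problem is reduced to bounding the second sum by $\frac{1}{2^n}$.

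The main obstacle, and the heart of the argument, is estimating $C2^n\sum_{q=1}^d\sum_{k\in F_{\be_q}}c_k$. The key structural observation is that the union $H=\bigcup_{q=1}^d F_{\be_q}$ lies in a controlled Schreier class: since $\{\be_q\}_q$ is $\Sj$-admissible and each $F_{\be_q}$ carries an $\mathcal{S}_2$ structure on $\{\min\supp x_k\}$, the convolution identity $\Sj*\mathcal{S}_2 = \mathcal{S}_{j+2}$ shows that $\{\min\supp x_k : k\in H\}\in\mathcal{S}_{j+2}$. Using $j\leqslant n-3$ we get $j+2\leqslant n-1$, so $\{\min\supp x_k:k\in H\}\in\mathcal{S}_{n-1}$, which is exactly the class appearing in clause~(ii) of the definition of a basic s.c.c. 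Because $\sum_{k=1}^m c_k e_{\psi(k)}$ is an $(n,\e)$ basic s.c.c.\ with $\psi(k)=\min\supp x_k$, the s.c.c.\ property forces $\sum_{k\in H}c_k < \e$. Hence $C2^n\sum_{q=1}^d\sum_{k\in F_{\be_q}}c_k \leqslant C2^n\sum_{k\in H}c_k < C2^n\e$, and since the exact vector hypothesis guarantees $\e < \frac{1}{32C2^{3n}}$ this is far smaller than $\frac{1}{2^n}$. This proves the first displayed inequality.

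For the sharper second conclusion under the extra hypothesis $s(\be_1)\geqslant\min\supp x$, I would bound the main term $\sum_{q=1}^d\frac{8C}{s(\be_q)}$. Since the averages are very fast growing, the sizes $s(\be_q)$ grow at least geometrically (indeed $s(\be_q) > 2^{\max\supp\be_{q-1}}$ and $s(\be_q) > s(\be_{q-1})$), so the series $\sum_q \frac{8C}{s(\be_q)}$ is dominated by a geometric tail governed by $s(\be_1)$; using $s(\be_1)\geqslant\min\supp x\geqslant 8C2^{2n}$ (from clause~(i) of the definition of a $(C,\theta,n)$ vector) gives $\sum_{q=1}^d\frac{8C}{s(\be_q)} < \frac{1}{2^n}$ with room to spare. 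Adding the already-controlled error term $C2^n\e < \frac{1}{2^n}$ yields $\sum_{q=1}^d|\be_q(x)| < \frac{2}{2^n}$, as claimed. The only delicate point is verifying the Schreier-class bookkeeping for $H$ precisely, but this follows the pattern of the analogous type~I$_\al$ estimate in Lemma~\ref{exactvectoraestimate} and the corresponding Lemma~3.16 in \cite{AM}.
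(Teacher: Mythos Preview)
Your overall strategy is correct and matches the intended argument (the paper itself gives no proof, referring to Lemma~3.16 of \cite{AM}, which proceeds exactly along these lines). There is, however, a genuine slip in the bookkeeping: the displayed inequality $\sum_{q}\sum_{k\in F_{\be_q}}c_k \leqslant \sum_{k\in H}c_k$ is in general false, since the sets $F_{\be_q}$ need not be pairwise disjoint --- a single $x_k$ may have range meeting several successive $\be_q$'s. For the same reason, the claim $\{\min\supp x_k:k\in H\}\in\mathcal{S}_{j+2}$ via $\mathcal{S}_j*\mathcal{S}_2$ is not immediate: you would need the sets $\{\min\supp x_k:k\in F_{\be_q}\}$ to be successive with minima forming an $\mathcal{S}_j$ set, and neither is automatic (the smallest $k$ with $\ran x_k\cap\ran\be_q\neq\varnothing$ may satisfy $\min\supp x_k<\min\supp\be_q$).

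The routine fix is to strip off, for each $q$, the at most one ``boundary'' index $k_q$ with $\min\supp x_{k_q}<\min\supp\be_q\leqslant\max\supp x_{k_q}$. The trimmed sets $F_{\be_q}\setminus\{k_q\}$ are then pairwise disjoint with $\min\supp x_k\in\ran\be_q$, so their union genuinely lies in $\mathcal{S}_j*\mathcal{S}_2=\mathcal{S}_{j+2}\subset\mathcal{S}_{n-1}$ and contributes at most $\e$. The distinct boundary indices $\{k_q:q\geqslant 2\}$ form a spread of a subset of $\{\min\supp\be_q:q\leqslant d-1\}\in\mathcal{S}_j\subset\mathcal{S}_{n-1}$ (since $k_q>k_{q-1}$ forces $\min\supp x_{k_q}>\max\supp x_{k_{q-1}}\geqslant\min\supp\be_{q-1}$), contributing another $\e$; the single term $c_{k_1}$ is harmless. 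Since $C2^n\cdot 3\e<1/2^n$ with a large margin, the conclusion survives. Your treatment of the second assertion under $s(\be_1)\geqslant\min\supp x$ is correct as written.
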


The next result uses the previous lemma and it is similar to Proposition 3.16 from \cite{AM}.

\begin{cor}
Let $\{x_k\}_k$ be a block sequence in $\X$, such that $x_k$ is a
$(C,\theta,n_k)$ exact vector and $\{n_k\}_k$ is strictly
increasing. Then $\bdx = 0$.\label{corexactvectorbzero}
\end{cor}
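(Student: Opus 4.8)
The plan is to verify condition (ii) of the $\be$-index characterization in Proposition \ref{bindexequivalent}, since this is the cleanest route to proving $\bdx = 0$. So fix $\e > 0$; I must produce $j_0\inn$ such that for every $j\geqslant j_0$ there is $k_j\inn$ with the property that for all $k\geqslant k_j$ and every $\Sj$-admissible very fast growing sequence $\{\be_q\}_{q=1}^d$ of $\be$-averages satisfying $s(\be_q) > j_0$, we have $\sum_{q=1}^d|\be_q(x_k)| < \e$. The essential tool is the preceding lemma (the one similar to Lemma 3.16 of \cite{AM}), which controls $\sum_{q=1}^d|\be_q(x)|$ for a single exact vector $x$ in terms of the sizes $s(\be_q)$, provided the admissibility index $j$ is small relative to the weight $n$ of the exact vector.

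First I would choose $j_0$ large enough that $\sum_{q : s(\be_q) > j_0} 8C/s(\be_q)$ can be forced below $\e/2$; since the $\be$-averages are very fast growing their sizes increase strictly, so the tail sum $\sum_q 8C/s(\be_q)$ is dominated by a convergent geometric-type series once all sizes exceed $j_0$, and picking $j_0$ with $8C/j_0 < \e/4$ (say) handles the first summand in the lemma's bound. Next, given $j\geqslant j_0$, I would use the fact that $\{n_k\}_k$ is strictly increasing to select $k_j$ so that $n_k \geqslant j+3$ for all $k\geqslant k_j$; this guarantees that when I apply the lemma to $x_k$ (which is a $(C,\theta,n_k)$ exact vector) against an $\Sj$-admissible sequence, the hypothesis $j\leqslant n_k - 3$ is met. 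Then the lemma gives $\sum_{q=1}^d|\be_q(x_k)| < \sum_{q=1}^d 8C/s(\be_q) + 1/2^{n_k} < \e/2 + 1/2^{n_k}$, and by possibly enlarging $k_j$ so that $1/2^{n_k} < \e/2$ as well, the total is below $\e$.

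The main obstacle I anticipate is not any single estimate but making the quantifier bookkeeping align with the exact hypotheses of the lemma: the lemma requires $j\leqslant n-3$ where $n$ is the weight of the \emph{particular} exact vector, whereas in Proposition \ref{bindexequivalent} the index $j$ is chosen first and $k_j$ second. The resolution is precisely that $\{n_k\}_k\to\infty$, so the constraint $j\leqslant n_k-3$ becomes automatic for all sufficiently large $k$; this is where the strict increase of the weights is used essentially. A minor secondary point is confirming that the lower bound $s(\be_q) > j_0$ in the characterization matches the control over the tail $\sum 8C/s(\be_q)$; since very fast growing forces $s(\be_q)$ strictly increasing with $s(\be_1) > j_0$, the geometric decay of $1/s(\be_q)$ is enough, so no further structure on the $\be$-averages is needed. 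Once these alignments are in place the argument closes, giving $\bdx = 0$.
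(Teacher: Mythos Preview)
Your plan is correct and follows precisely the route the paper intends: it derives the corollary from the preceding lemma (the analogue of Lemma~3.16 in \cite{AM}) by verifying condition (ii) of Proposition~\ref{bindexequivalent}, using that $n_k\to\infty$ to ensure $j\leqslant n_k-3$ for large $k$. One small point of precision: strict increase of the sizes alone does not give geometric decay of $\sum_q 8C/s(\be_q)$; what you actually need is the very fast growing condition $s(\be_{q+1})>2^{\max\supp\be_q}$, which (together with $\max\supp\be_q\geqslant q$) forces $s(\be_q)>2^{q-1}$ for $q\geqslant 2$, and combined with $s(\be_q)>j_0$ this makes the sum $\lesssim (\log_2 j_0)/j_0\to 0$.
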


\section{A combinatorial result}

In this section we introduce a new condition concerning the
behaviour of branches of special functionals on a block sequence
$\{x_k\}_k$ (see the definition below). When this condition is
satisfied, we shall write $\Bcrossx = 0$. We prove that one can
find in every block subspace a normalized block sequence
$\{x_k\}_k$ satisfying $\Bcrossx = 0$, as well as $\adx = 0$ and
$\bdx = 0$. We then proceed to prove a Ramsey type result
concerning block sequences with $\Bcrossx = 0$ and $\bdx = 0$. The
above are used in the next section to show that a block sequence
$\{x_k\}_k$ with $\Bcrossx = 0$, $\adx = 0$ and $\bdx = 0$, has a
subsequence generating a $c_0$ spreading model.

\begin{dfn}
Let $\{x_k\}_k$ be a block sequence in $\X$ and
$b = \{f_q,g_q\}_{q=1}^\infty\in\mathcal{B}$
(see the definition of the norming set) satisfying the following.
For every $\e>0$ there exist $k_0,q_0\inn$, such that for every $k\geqslant k_0,q\geqslant q_0$
we have that $|(f_q \substack{+\\[-2pt]-} g_q)(x_k)| < \e$. Then we write $\bcrossx = 0$.
If $\bcrossx = 0$ for every $b\in\mathcal{B}$, then we write $\Bcrossx = 0$.
\end{dfn}

\begin{rmk}
If $\bcrossx \neq 0$, using a pigeon hole argument, it is easy to
see that there exists  an infinite subset of the natural
numbers $M$ and $\e>0$ such that one of the following holds.

\begin{itemize}

\item[(i)] For every $k\in M$, there exists $q\inn$ such that $|(f_q + g_q)(x_k)| \geqslant \e$. In this case we say that $b_+\;\e$-norms $\{x_k\}_k$.

\item[(ii)] For every $k\in M$, there exists $q\inn$ such that $|(f_q - g_q)(x_k)| \geqslant \e$. In this case we say that $b_-\;\e$-norms $\{x_k\}_k$.

\end{itemize}
In either case we say that $b\;\e$-norms $\{x_k\}_k$.
\label{remarkseparate}
\end{rmk}

\begin{prp} Let $\{x_k\}_k$ be a bounded block sequence and
$b\in\mathcal{B}$ such that $b_+$\;$\e$-norms
$\{x_k\}_k$. Then there exists a subsequence of $\{x_k\}_k$ that
generates an $\ell_1$ spreading model.\label{bplusellone}
\end{prp}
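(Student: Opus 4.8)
The plan is to produce, by a careful greedy choice of indices along the branch, a subsequence on which the functionals $f_q+g_q$ act \emph{diagonally}, and then to assemble type II$_+$ functionals witnessing a lower $\ell_1$ estimate. First I would invoke Remark \ref{remarkseparate}(i) to fix an infinite $M\subset\mathbb N$ and, for each $k\in M$, an index $q(k)\inn$ with $|(f_{q(k)}+g_{q(k)})(x_k)|\geqslant\e$. Passing to an infinite subset of $M$ and, if necessary, replacing $\{x_k\}_k$ by $\{-x_k\}_k$ (which does not affect whether a spreading model is $\ell_1$), a pigeonhole argument lets me assume $(f_{q(k)}+g_{q(k)})(x_k)\geqslant\e$ for all $k\in M$. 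Two structural facts drive the rest. Since $b$ is a special branch, $f_1<g_1<f_2<g_2<\cdots$, so the functionals $f_q+g_q$ have pairwise disjoint, increasingly ordered ranges $[\min\supp f_q,\max\supp g_q]$; and since $(f_{q(k)}+g_{q(k)})(x_k)\neq 0$ forces $\max\supp g_{q(k)}\geqslant\min\supp x_k$, we get $q(k)\to\infty$ as $k\to\infty$, whence $\min\supp f_{q(k)}\to\infty$ as well.

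Next I would select $k_{n_1}<k_{n_2}<\cdots$ in $M$ recursively so that, writing $q_j=q(k_{n_j})$, at each step $j\geqslant 2$ both $\min\supp x_{k_{n_j}}>\max\supp g_{q_{j-1}}$ and $\min\supp f_{q_j}>\max\supp x_{k_{n_{j-1}}}$ hold; both are achievable because the supports of the $x_k$ and the quantities $\min\supp f_{q(k)}$ tend to infinity with $k$. The first inequality places $x_{k_{n_j}}$ entirely to the right of $f_{q_i}+g_{q_i}$ for every $i<j$ and forces $q_j>q_{j-1}$; the second places $f_{q_i}+g_{q_i}$ entirely to the right of $x_{k_{n_j}}$ for every $i>j$. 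Consequently $(f_{q_i}+g_{q_i})(x_{k_{n_j}})=0$ for all $i\neq j$, while the diagonal terms remain $\geqslant\e$. After a further passage to a subsequence I may assume $\{x_{k_{n_j}}\}_j$ generates a spreading model.

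Finally, fix $N$ and scalars $a_1,\ldots,a_N$, and take tail indices $j_1<\cdots<j_N$ with $\min\supp f_{q_{j_1}}\geqslant 2N$, which is possible since $\min\supp f_{q_j}\to\infty$. Splitting $\{1,\ldots,N\}$ into the indices with $a_i>0$ and those with $a_i<0$, one of the two blocks $P$ carries at least half the $\ell_1$ mass; the functional $\pm\tfrac12\sum_{i\in P}(f_{q_{j_i}}+g_{q_{j_i}})$ is a legitimate type II$_+$ functional, since $\{f_{q_{j_i}},g_{q_{j_i}}\}$ lie on the special branch $b$ and $2\#P\leqslant 2N\leqslant\min\supp f_{q_{j_1}}\leqslant\min\supp f_{\min\{q_{j_i}:i\in P\}}$. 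Evaluating it on $\sum_i a_i x_{k_{n_{j_i}}}$ kills every off-diagonal term and yields at least $\tfrac{\e}{2}\sum_{i\in P}|a_i|\geqslant\tfrac{\e}{4}\sum_{i=1}^N|a_i|$, so $\|\sum_i a_i x_{k_{n_{j_i}}}\|\geqslant\tfrac{\e}{4}\sum_i|a_i|$. Together with the trivial upper $\ell_1$ bound coming from the boundedness of $\{x_k\}_k$, this shows the spreading model is equivalent to the $\ell_1$ basis. The main obstacle is bookkeeping the admissibility constraint $2(\#F)\leqslant\min\supp f_{\min F}$ against the length $N$ of the combination; it is resolved precisely because passing to the tail of the sequence makes $\min\supp f_{q_j}$ arbitrarily large, which is exactly the freedom the spreading-model definition grants.
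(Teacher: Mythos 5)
Your proof is correct and follows essentially the same route as the paper's: pass to a subsequence on which the branch functionals $f_q+g_q$ act diagonally with disjoint ranges and the admissibility constraint $2(\#F)\leqslant\min\supp f_{\min F}$ is arranged, then assemble type II$_+$ functionals to get a lower $\ell_1$ estimate. The only cosmetic difference is at the end: the paper proves the lower estimate for non-negative coefficients and invokes weak nullness of the bounded block sequence (hence unconditionality of its spreading models) to handle arbitrary signs, whereas you handle signs directly by restricting the type II$_+$ functional to the block of coefficients carrying at least half the $\ell_1$ mass, at the cost of a factor $2$ in the constant.
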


\begin{proof}

If $b = \{f_q,g_q\}_{q=1}^\infty$ passing, if necessary,
to a subsequence, we may assume the following.

\begin{enumerate}

\item[(i)] For every $k\inn$ there exists $q_k\inn$ such that
$(f_{q_k} + g_{q_k})(x_k) > \e$ and $\min\supp
f_{q_k}\geqslant 2k$.

\item[(ii)] For $k\neq m\inn$, $\ran (f_{q_k} + g_{q_k})\cap
\ran x_m = \varnothing$

\end{enumerate}

Then for $n \leqslant k_1 <\cdots <k_n$ natural numbers and
$\{c_i\}_{i=1}^n$ non negative reals, we have that $f =
\frac{1}{2}\sum_{i=1}^n(f_{q_{k_i}} + g_{q_{k_i}})\in W$ and
$f(\sum_{i=1}^nc_ix_{k_i}) > \frac{\e}{2}\sum_{i=1}^nc_i$,
therefore
\begin{equation}
\|\sum_{i=1}^nc_ix_{k_i}\| >
\frac{\e}{2}\sum_{i=1}^nc_i\label{bplusselloneeq1}
\end{equation}

Since $\{x_k\}_k$ is weakly null, every spreading model admitted
by it must be unconditional. Combining this fact with
\eqref{bplusselloneeq1}, we conclude that every spreading model
admitted by $\{x_k\}_k$ is equivalent to the usual basis of
$\ell_1$.

\end{proof}

\begin{lem}
Let $\{x_k\}_k$ be a block sequence in $\X$ with $\bdx = 0$ and
$\e>0$. Then there exists  an infinite subset of the natural
numbers $M$, such that the set $B_\e = \{b\in\mathcal{B}: b$\;$\e$-norms\;$\{x_k\}_{k\in M}\}$ is finite.
\label{lemabzerobranchfinite}
\end{lem}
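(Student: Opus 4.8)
The plan is to argue by contradiction: I assume that for \emph{every} infinite $M\subseteq\mathbb{N}$ the set $B_\e(M)=\{b\in\mathcal{B}: b\ \e\text{-norms}\ \{x_k\}_{k\in M}\}$ is infinite, and I derive a violation of $\bdx=0$ by manufacturing an oversized $\be$-average on a single coordinate. The first step extracts a numerical threshold from the index hypothesis. Applying Proposition \ref{bindexequivalent} to $\bdx=0$ with $\e'=\e/4$ yields $j_0\inn$ and, for the value $j=j_0$, some $k_{j_0}\inn$ with the following feature: for every $k\geqslant k_{j_0}$ and every single $\be$-average $\be$ with $s(\be)>j_0$ one has $|\be(x_k)|<\e/4$, since the length-one sequence $\{\be\}$ is trivially very fast growing and $\mathcal{S}_{j_0}$-admissible. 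Thus no $\be$-average of size exceeding $j_0$ attains value $\e/4$ on a sufficiently late coordinate; this is the quantitative obstruction the construction must violate.

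The second step resolves the ``common coordinate'' difficulty built into the definition of norming. By Remark \ref{remarkseparate}, if $b$ $\e$-norms $\{x_k\}_{k\in M}$ then there is an infinite witness $M'\subseteq M$ on which, for \emph{every} $k\in M'$, some $(f_q\plusminus g_q)(x_k)$ has modulus at least $\e$. I therefore refine inductively: set $N_0=\mathbb{N}$, and having chosen $N_{s-1}$, use the contradiction hypothesis to pick $b^s\in B_\e(N_{s-1})$ distinct from $b^1,\dots,b^{s-1}$, letting $N_s\subseteq N_{s-1}$ be its witness. Since each $b^r$ $\e$-norms every coordinate indexed by its own witness $N_r\supseteq N_{j_0+1}$, after $j_0+1$ steps I obtain pairwise distinct special branches $b^1,\dots,b^{j_0+1}$ and an infinite set $N:=N_{j_0+1}$ such that \emph{every} one of them $\e$-norms $x_k$ for \emph{all} $k\in N$. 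This nesting is exactly what guarantees a single vector simultaneously normed by all $j_0+1$ branches.

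The decisive third step forces these $j_0+1$ norming functionals to carry pairwise distinct weights, via injectivity of the coding $\sigma$. Let $D$ be the maximum, over the finitely many pairs, of the level at which two of $b^1,\dots,b^{j_0+1}$ first differ, and let $R$ bound the supports of all their functionals up to level $D$. Choosing $k\in N$ with $k\geqslant k_{j_0}$ and $\min\supp x_k>R$ forces, for each $s$, the norming level $p_s$ of $b^s$ on $x_k$ to exceed $D$, as functionals below level $D$ cannot reach $\ran x_k$. Then for $s\neq s'$ the weights $w(f^{b^s}_{p_s})=\sigma(b^s{\restriction}(p_s-1))$ are distinct: if $p_s\neq p_{s'}$ the arguments of $\sigma$ have different lengths, while if $p_s=p_{s'}$ they already differ at the divergence level $\leqslant D$; injectivity of $\sigma$ gives distinct weights in either case. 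Hence, adjusting signs (legitimate since $W$ is symmetric and both $\plusminus$ signs occur in the definitions of type II functionals), $\be:=\frac{1}{j_0+1}\sum_{s=1}^{j_0+1}\tfrac12(f^{b^s}_{p_s}\plusminus g^{b^s}_{p_s})$ is a genuine $\be$-average of size $j_0+1$ with $\be(x_k)\geqslant\e/2$. As $j_0+1>j_0$ and $k\geqslant k_{j_0}$, this contradicts the first step and finishes the proof.

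I expect the main obstacle to be the reconciliation of the two infinitary features of the setup. On one hand, $\e$-norming supplies only an infinite witness set, not a tail, so the nested-refinement of the second step is indispensable for producing a coordinate normed by all the chosen branches at once; on the other hand, the distinct-weight conclusion of the third step is delicate, since it hinges on pushing the relevant vector far enough to the right that every branch is compelled to act above its divergence level, the only regime in which injectivity of $\sigma$ can be exploited. Getting these two localizations to cooperate on one vector $x_k$ is the crux.
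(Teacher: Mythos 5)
Your proposal is correct and follows essentially the same route as the paper's proof: assume every infinite $M$ admits infinitely many $\e$-norming branches, nest witness sets to find a single late coordinate $x_k$ simultaneously $\e$-normed by many distinct branches, push $\min\supp x_k$ beyond the supports at the divergence levels so that injectivity of $\sigma$ forces pairwise distinct weights, and assemble the resulting type II functionals into an oversized $\be$-average contradicting $\bdx=0$ via Proposition \ref{bindexequivalent}. The only (cosmetic) difference is that you fix the quantitative threshold $j_0$ in advance and build one violating $\be$-average of size $j_0+1$, whereas the paper builds $\be$-averages of every size $m$ and then invokes the proposition.
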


\begin{proof}
Towards a contradiction, assume that for every  infinite subset
of the natural numbers $M$, the set $\{b\in\mathcal{B}:
b\;\e$-norms\;$\{x_k\}_{k\in M}\}$ is infinite. By using
induction, choose  infinite subsets of the natural numbers $M_1\supset M_2\supset\cdots\supset
M_n\supset\cdots$ and
$\{b_n: n\inn\}\subset \mathcal{B}$ with $b_n\neq b_m$ for $n\neq
m$, satisfying the following. For every $n\inn$ and and $k\in
M_n$, if $b_n = \{f_q^n,g_q^n\}_{q=1}^\infty$ there exists
$q\inn$ such that either $|(f_q^n + g_q^n)(x_k)| > \e$ or
$|(f_q^n - g_q^n)(x_k)| > \e$. To simplify notation, from now
on we will assume that $|(f_q^n + g_q^n)(x_k)| > \e$.

We are going to prove the following. For every $k_0,m\inn$, there
exists $k\geqslant k_0$ and $\be$ a $\be$-average in $W$ of size
$s(\be) = m$, such that $\be(x_k) >\frac{\e}{2}$. By Proposition
\ref{bindexequivalent}, this means that $\bdx > 0$ which yields a
contradiction.

Let $k_0,m\inn$. Since $b_n\neq b_l$ for $n\neq l$, there exists
$q_0\inn$, such that for every $1\leqslant n<l\leqslant m$, for
every $q_1,q_2\geqslant q_0$, $w(f_{q_1}^n) \neq
w(f_{q_2}^l)$.

Choose $k\in M_m$ with $k\geqslant k_0$ and $\min\supp x_k\geq
\max\{\max\supp g_{q_0}^n: n=1,\ldots,m\}$. Then for
$n=1,\ldots,m$ there exists $q_n>q_0$ such that $|(f_{q_n}^n +
g_{q_n}^n)(x_k)| > \e$. Set $h_n = \sgn\big((f_{q_n}^n +
g_{q_n}^n)(x_k)\big)\frac{1}{2}(f_{q_n}^n + g_{q_n}^n)$ for
$n=1,\ldots,m$.

Then $h_n$ is a functional of type II in $W$ with $\hat{w}(h_n) =
\{w(f_{q_n}^n)\}$ for $n=1,\ldots,m$ and $h_n(x_k) >
\frac{\e}{2}$. Since $\hat{w}(h_n)\cap\hat{w}(h_l) = \varnothing$
for $1\leqslant n<l\leqslant m$, we have that $\be =
\frac{1}{m}\sum_{n=1}^mh_n$ is a $\be$-average of size $s(\be) =
m$ with $\be(x_k) > \frac{\e}{2}$. This completes the proof.

\end{proof}

\begin{lem}
Let $\{x_k\}_k$ be a block sequence in $\X$ with $\bdx = 0$. Then
there exists  an infinite subset of the natural numbers $M$, such
that the set $B = \{b\in\mathcal{B}:$\; there exists $\e>0$ such
that $b$\; $\e$-norms $\{x_k\}_{k\in M}\}$ is
countable. \label{lemabzerobranchcountable}
\end{lem}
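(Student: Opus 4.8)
The plan is to apply Lemma~\ref{lemabzerobranchfinite} once for each threshold $\e=1/n$ and to splice the resulting subsequences together by a diagonal argument. Two elementary observations organize the proof. First, for a fixed infinite $M$, if $b$ $\e$-norms $\{x_k\}_{k\in M}$ and $0<\e'\leqslant\e$, then $b$ also $\e'$-norms $\{x_k\}_{k\in M}$; hence, writing $B_\e(M)=\{b\in\mathcal{B}: b\ \e\text{-norms}\ \{x_k\}_{k\in M}\}$, the set $B$ in the statement equals $\bigcup_{n}B_{1/n}(M)$, so it suffices to produce an infinite $M$ for which every $B_{1/n}(M)$ is finite. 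Second, by Remark~\ref{remarkseparate} the property ``$b$ $\e$-norms $\{x_k\}_{k\in M}$'' amounts to the existence of infinitely many $k\in M$ admitting some $q$ with $|(f_q\pm g_q)(x_k)|\geqslant\e$ (for one fixed sign), and this is visibly unaffected by discarding finitely many indices from $M$. This tail-invariance is what makes the diagonalization go through.

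First I would build a decreasing chain of infinite sets $\mathbb{N}=M_0\supseteq M_1\supseteq M_2\supseteq\cdots$. Given $M_{n-1}$, the subsequence $\{x_k\}_{k\in M_{n-1}}$ still satisfies $\be\big(\{x_k\}_{k\in M_{n-1}}\big)=0$, since the condition $\bdx=0$ is manifestly inherited by every subsequence. Applying Lemma~\ref{lemabzerobranchfinite} to this block sequence with $\e=1/n$ furnishes an infinite $M_n\subseteq M_{n-1}$ such that $B_{1/n}(M_n)$ is finite. Now choose $m_1<m_2<\cdots$ with $m_n\in M_n$ and set $M=\{m_n:n\inn\}$. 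For each fixed $n$ the tail $\{m_j:j\geqslant n\}$ is contained in $M_n$, so $M$ and $M\cap M_n$ differ by a finite set.

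It remains to check that $B_{1/n}(M)$ is finite for every $n$. If $b\in B_{1/n}(M)$, then there are infinitely many $k\in M$ carrying a $q$ with $|(f_q\pm g_q)(x_k)|\geqslant 1/n$; all but finitely many of these $k$ lie in $M_n$, so $b$ also $\tfrac1n$-norms $\{x_k\}_{k\in M_n}$, i.e.\ $b\in B_{1/n}(M_n)$. Hence $B_{1/n}(M)\subseteq B_{1/n}(M_n)$ is finite, and $B=\bigcup_n B_{1/n}(M)$ is a countable union of finite sets, therefore countable. The point that requires care is precisely this last inclusion: the diagonal set $M$ is not contained in any single $M_n$, so the naive nested-subsequence monotonicity is unavailable, and the argument succeeds only because $\e$-norming is a tail property, insensitive to finite modifications of the index set.
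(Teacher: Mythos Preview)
Your proof is correct and follows essentially the same approach as the paper: apply Lemma~\ref{lemabzerobranchfinite} for each $\e=1/n$ to obtain a nested sequence $M_1\supset M_2\supset\cdots$, diagonalize, and show $B\subset\bigcup_n B_{1/n}(M_n)$. The only difference is that you spell out carefully the tail-invariance of $\e$-norming that justifies the inclusion $B_{1/n}(M)\subset B_{1/n}(M_n)$, whereas the paper simply asserts ``it easily follows''.
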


\begin{proof}
Apply Lemma \ref{lemabzerobranchfinite} and choose  infinite subsets of the
natural numbers $M_1\supset
M_2\supset\cdots\supset M_n\supset\cdots$ such that the set $B_n = \{b\in\mathcal{B}: b$
\; $\frac{1}{n}$-norms $\{x_k\}_{k\in M_n}\}$ is finite,
for every $n\inn$. Choose $M$ a diagonalization of $\{M_n\}_n$.

We will show that $B = \{b\in\mathcal{B}:$\; there exists $\e>0$
such that $b$ \;$\e$-norms $\{x_k\}_{k\in
M}\}\subset\cup_nB_n$.

Let $b\in B$. Then, there exists $n\inn$, such that $b$
$\frac{1}{n}$-norms $\{x_k\}_{k\in M}$. It easily follows that
$b\in B_n$.

\end{proof}

\begin{lem}
Let $\{x_k\}_k$ be a bounded block sequence in $\X$ with $\bdx =
0$. Then there exists   an increasing sequence of
subsets of the natural numbers $\{F_k\}_k$ with $\#F_k\leqslant\min F_k$ for
all $k\inn$ with $\lim_k\#F_k = \infty$ such that if $y_k = \frac{1}{\#F_k}\sum_{i\in F_k}x_i$, then $\Bcrossy = 0$.
\label{lemabzeroaverages}
\end{lem}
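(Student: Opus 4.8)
The plan is to combine the countability of the ``dangerous'' branches furnished by Lemma~\ref{lemabzerobranchcountable} with a diagonal choice of \emph{diffuse} Schreier averages that spread the selected blocks across the pair-ranges of the relevant branches. First I would normalise, assuming $\|x_k\|\leqslant 1$, and apply Lemma~\ref{lemabzerobranchcountable} to pass to a subsequence (relabelled $\{x_k\}_k$) so that the set $\{b\in\mathcal{B}:\bcrossx\neq 0\}$ is countable; enumerate it as $\{b^n\}_n$. Every remaining branch $b$ has $\bcrossx=0$, and for such a branch the conclusion is automatic for \emph{any} averages $y_k=\frac{1}{\#F_k}\sum_{i\in F_k}x_i$ with $\min F_k\to\infty$: given $\e>0$ choose $k_0,q_0$ from $\bcrossx=0$; then for $k$ with $\min F_k\geqslant k_0$ and $q\geqslant q_0$ one has $|(f_q\plusminus g_q)(y_k)|\leqslant\frac{1}{\#F_k}\sum_{i\in F_k}|(f_q\plusminus g_q)(x_i)|<\e$, so $\bcrossy=0$. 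Hence it suffices to build averages that kill the countably many $b^n$.

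The core estimate I would isolate is a damping bound for a single pair acting on an average. Since $f_q,g_q\in W$ and $\|x_i\|\leqslant 1$, for every pair of any branch $|(f_q\plusminus g_q)(y_k)|\leqslant\frac{2}{\#F_k}\,\#\{i\in F_k:\ran x_i\cap\ran(f_q\plusminus g_q)\neq\varnothing\}$. Thus if the blocks indexed by $F_k$ meet each pair of $b^n$ in at most $m_k$ of them, then $\sup_q|(f_q\plusminus g_q)(y_k)|\leqslant 2m_k/\#F_k$, a bound uniform over $q$ that decays as soon as $m_k=o(\#F_k)$. Such diffuseness is attainable because the ranges $\ran(f_q+g_q)$ of the pairs of a fixed branch are pairwise disjoint and increase to infinity (as $f_1<g_1<f_2<g_2<\cdots$); consequently a single pair can be $\e$-heavy on $y_k$ only if it already contains at least $\frac{\e}{2}\#F_k$ of the selected blocks, so only boundedly many pairs are heavy and the danger is removed by forcing the selected blocks to straddle many pair-boundaries.

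The construction of $\{F_k\}_k$ then proceeds by induction along a threshold $n_k\to\infty$. Given $b^1,\dots,b^{n_k}$, each $b^n$ partitions the coordinates into its increasing sequence of disjoint pair-ranges; I would take $\min F_k$ very large and select the blocks $x_i$, $i\in F_k$, sparsely enough that between any two consecutive selected blocks the running pair-index of \emph{every} $b^n$ with $n\leqslant n_k$ advances. This is a finite compatibility requirement, satisfiable because only finitely many branches are active and all their pair-ranges escape to infinity; it forces each pair of each $b^n$ ($n\leqslant n_k$) to meet at most one selected block, so $m_k=O(1)=o(\#F_k)$. Choosing $\#F_k\to\infty$ with $\#F_k\leqslant\min F_k$ (automatic once $\min F_k$ is huge) and setting $y_k=\frac{1}{\#F_k}\sum_{i\in F_k}x_i$, the bound of the previous paragraph gives $\sup_q|(f_q\plusminus g_q)(y_k)|\to 0$, hence $\bcrossy=0$ for every fixed $b^n$ (active from stage $n$ on). Together with the first paragraph this yields $\Bcrossy=0$.

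The main obstacle is precisely this simultaneous-diffuseness step: producing at each stage an arbitrarily large Schreier set whose blocks are spread across the pair-partitions of all currently active branches at once, while respecting $\#F_k\leqslant\min F_k$. The tension is that a branch may have very wide pairs swallowing many blocks; I would overcome it by pushing $\min F_k$ far out and placing the selected blocks beyond the current pair-ranges of all active branches, using that only finitely many are active at each stage. A secondary point needing care is that the reduction of the first paragraph must genuinely capture, in the countable list $\{b^n\}_n$, every branch with $\bcrossx\neq 0$ (including those that $\e$-norm only a sparse subsequence via Remark~\ref{remarkseparate}), so that no unlisted branch survives the averaging; this is where the full strength of Lemma~\ref{lemabzerobranchcountable} is invoked.
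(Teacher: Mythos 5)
Your proposal is correct and follows essentially the same route as the paper: invoke Lemma~\ref{lemabzerobranchcountable} to reduce to a countable family of dangerous branches, arrange the selected blocks so sparsely that each pair of each such branch meets at most one block per averaging set (the paper does this via nested subsequences $M_n$ and a diagonal $M$ before choosing the $F_k$, you build it into the staged choice of the $F_k$ directly), and then use the $1/\#F_k$ damping of the averages, with the non-listed branches handled by the trivial observation that $\bcrossx=0$ passes to averages. The only differences are presentational (direct argument versus the paper's argument by contradiction), not mathematical.
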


\begin{proof}
Using Lemma \ref{lemabzerobranchcountable} and passing, if
necessary, to a subsequence, we may assume that if $B^\prime =
\{b\in\mathcal{B}:$\; there exists $\e>0$ such that $b$\;$\e$-norms $\{x_k\}_k\}$, then $B^\prime = \{b_n: n\inn\}$.

Let $b_n = \{f_q^n,g_q^n\}_{q=1}^\infty$ for all $n\inn$
and choose 
infinite subsets of the natural numbers $M_1\supset M_2\supset\cdots\supset M_n\supset\cdots$ such that for every
$n,q\inn$, there exists at most one $k\in M_n$, with $\ran
(f_q^n + g_q^n) \cap \ran x_k \neq \varnothing$.

Choose $M$ a diagonalization of $\{M_n\}_n$. Then for every
$n\inn$ there exists $q_n\inn$ such that for every $q\geqslant
q_n$ there exists at most one $k\in M$ with $\ran (f_q^n +
h_q^n) \cap \ran x_k \neq \varnothing$.

Choose  an increasing sequence of subsets of $M$ $\{F_k\}_k$ with $\#F_k\leqslant\min F_k$ for all $k\inn$ with
$\lim_k\#F_k = \infty$ and set $y_k = \frac{1}{\#F_k}\sum_{i\in
F_k}x_i$ for all $k\inn$.

Towards a contradiction, assume that there exist $\e > 0$
 and $b =
\{f_q,g_q\}_{q=1}^\infty\in \mathcal{B}$, such that $b$\;$\e$-norms $\{y_k\}_k$. For convenience, assume
that $b_+$ \;$\e$-norms $\{y_k\}_k$ and choose  an
infinite subset of the natural numbers $N$, such that for every $k\in N$
there exists $q_k\inn$ with $|(f_{q_k} + g_{q_k})(y_k)| > \e$.

It follows that for every $k\in N$, there exists $i_k\in F_k$ such
that $|(f_{q_k} + g_{q_k})(x_{i_k})| > \e$. We conclude that
$b$\;$\e$-norms $\{x_k\}_k$ and hence $b\in B^\prime$,
i.e. $b = b_n$, for some $n\inn$.

Choose $k\in N$ with $k > \max\supp g_{q_n}^n$ and $\#F_k >
\e^{-1}\sup\{\|x_k\|: k\inn\}$. Then for every $q\inn$, there
exists at most one $i\in F_k$, such that $\ran (f_{q}^n +
g_{q}^n) \cap \ran x_i \neq \varnothing$ and hence for every
$q\inn$, we have that $|(f_{q}^n + h_{q}^n)(y_k)| <
\frac{\sup\{\|x_k\|: k\inn\}}{\#F_k}<\e$. This contradiction
completes the proof.

\end{proof}

\begin{prp}
Let $\{x_k\}_k$ be a block sequence in $\X$ such that $x_k$ is a
$(C,\theta,n_k)$ exact vector with $n_k\in L_3$ (see the
definition of the coding function) and $\{n_k\}_k$ is strictly
increasing. Then $\Bcrossx = 0$. \label{exactvectorsbvoid}
\end{prp}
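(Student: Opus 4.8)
The plan is to fix an arbitrary special branch $b = \{f_q,g_q\}_{q=1}^\infty\in\mathcal{B}$ and an $\e>0$, and to produce $q_0,k_0\inn$ so that $|(f_q \plusminus g_q)(x_k)| < \e$ whenever $q\geqslant q_0$ and $k\geqslant k_0$. This is precisely the assertion $\bcrossx = 0$, and since $b$ is arbitrary it yields $\Bcrossx = 0$. The whole argument will rest on recognizing each single pair $f_q\plusminus g_q$ as a (singleton) type II functional and then invoking the rough estimate of Corollary \ref{corexactvectorroughestimate}, whose weight-avoidance hypothesis is exactly what the condition $n_k\in L_3$ is designed to supply.

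The first observation is that every weight along the branch lies in $L = L_1\cup L_2$: by the definition of a special sequence $w(f_1) = w(g_1)\in L_1$, while for $q>1$ we have $w(f_q) = w(g_q) = \sigma(f_1,g_1,\ldots,f_{q-1},g_{q-1})\in L_2$, since the coding function $\sigma$ takes values in $L_2$. Write $i_q = w(f_q)$. Next, for fixed $q\geqslant 2$ and any $k$, I view the single pair as a type II functional: $\frac{1}{2}(f_q + g_q)$ is of type II$_+$ (take the initial special sequence $\{f_{q'},g_{q'}\}_{q'=1}^q$ and $F = \{q\}$, which is admissible because $2\cdot\#F = 2\leqslant\min\supp f_q$), while $\frac{1}{2}(f_q - g_q) = \frac{1}{2}\la_q(f_q - g_q)$ with $\la_q = 1$ is of type II$_-$, since $\|u_q^*\|_u\leqslant 1$ by the suppression unconditionality of Pe\l czy\'nski's basis. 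In both cases $\widehat{w}\big(\tfrac{1}{2}(f_q\plusminus g_q)\big) = \{i_q\}$.

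The crux is the lacunarity built into the exact vectors. Because $x_k$ is a $(C,\theta,n_k)$ exact vector with $n_k\in L_3$, Remark \ref{lacunarity} gives $L\cap\{n_k,\ldots,2^{2n_k}\} = \varnothing$, and since $i_q\in L$ this forces $i_q\notin\{n_k,\ldots,2^{2n_k}\}$; equivalently $\{n_k,\ldots,2^{2n_k}\}\cap\widehat{w}\big(\tfrac{1}{2}(f_q\plusminus g_q)\big) = \varnothing$. Hence Corollary \ref{corexactvectorroughestimate} applies to each of the two singleton type II functionals, with $E_1 = \{q\}$ when $i_q<n_k$ and $E_1=\varnothing$ when $i_q>2^{2n_k}$, and yields in either case
\begin{equation*}
|(f_q\plusminus g_q)(x_k)| < \frac{14C}{2^{i_q}} + \frac{4C}{2^{n_k}}.
\end{equation*}

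It remains to make both terms small simultaneously. Along the branch the supports increase and $i_q = \sigma(f_1,g_1,\ldots,f_{q-1},g_{q-1}) > \max\supp g_{q-1}$, which tends to infinity with $q$; so choosing $q_0$ with $\max\supp g_{q_0-1}$ large enough makes $\frac{14C}{2^{i_q}}<\e/2$ for every $q\geqslant q_0$. Since $\{n_k\}_k$ is strictly increasing, choosing $k_0$ with $\frac{4C}{2^{n_{k_0}}}<\e/2$ makes the second term $<\e/2$ for all $k\geqslant k_0$, giving $|(f_q\plusminus g_q)(x_k)|<\e$ for $q\geqslant q_0$, $k\geqslant k_0$. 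I expect the only delicate points to be the verification that the single pairs $\frac{1}{2}(f_q\plusminus g_q)$ are genuinely of type II (in particular the coefficient constraint $\|u_q^*\|_u\leqslant 1$ in the type II$_-$ case) and the uniform checking of the weight-avoidance hypothesis; once the lacunarity $i_q\notin\{n_k,\ldots,2^{2n_k}\}$ is secured, the estimate and the conclusion follow at once.
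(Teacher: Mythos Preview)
Your proof is correct and follows exactly the same route as the paper: recognize each $\frac{1}{2}(f_q\plusminus g_q)$ as a type II functional and apply Corollary \ref{corexactvectorroughestimate}, using $n_k\in L_3$ together with Remark \ref{lacunarity} to secure the weight-avoidance hypothesis $i_q\notin\{n_k,\ldots,2^{2n_k}\}$. Your write-up is in fact more explicit than the paper's (which omits the verification of the weight-avoidance hypothesis and the choice of $q_0,k_0$), but the argument is the same.
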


\begin{proof}

Let $b\in\mathcal{B}$
Observe that for $q\inn, h_q = \frac{1}{2}(f_{q} \plusminus g_{q})$
is a functional of type II and by Corollary
\ref{corexactvectorroughestimate}, if $i_q = w(f_{q})$ for
$k\inn$ we have that $|h_q(x_k)| < \frac{7C}{2^{i_q}} +
\frac{2C}{2^{n_k}}$. From this it easily follows that $\bcrossx = 0$.
\end{proof}

\begin{prp}
Let $\{x_k\}_k$ be a normalized block sequence in $\X$. Then there
exists $\{y_k\}_k$ a further normalized block sequence of
$\{x_k\}_k$ such that $\ady = 0, \bdy = 0$ and $\Bcrossy = 0$.
\label{propexistence}
\end{prp}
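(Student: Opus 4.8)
The plan is to reduce the problem to the construction of exact vectors whose parameter lies in $L_3$, and to fall back on the averaging Lemma~\ref{lemabzeroaverages} only when no positive index is available. First I would arrange that $\{x_k\}_k$ is a seminormalized $\al$-RIS with $\adx = 0$. If $\adx > 0$, I would apply Proposition~\ref{ell1nandexactvectorexist} to obtain, for a strictly increasing sequence $\{n_k\}_k$, a block sequence of $(C,\theta,n_k)$ vectors supported by $\{x_k\}_k$; by Corollary~\ref{corexactvetorazero} this sequence has $\al$-index $0$ and, after passing to a subsequence, is a $(7C,\{n_k\}_k)\;\al$-RIS. If instead $\adx = 0$ from the outset, the characterization in Proposition~\ref{aindexequivalent} lets me pass to a subsequence with sufficiently spread-out supports and a rapidly increasing choice of $\{n_k\}_k$ to realize the $\al$-RIS conditions directly. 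In either case I may henceforth assume $\{x_k\}_k$ is a seminormalized $\al$-RIS with $\adx = 0$.

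Next I would split according to the $\be$-index. Suppose first that $\bdx > 0$. Then $\{x_k\}_k$ is a seminormalized $\al$-RIS satisfying the hypothesis of Proposition~\ref{ell1nandexactvectorexist}, which furnishes, for every $n$ and every $k_0$, a $(C,\theta,n)$ exact vector supported by $\{x_k\}_k$ with $\min\supp x \geqslant k_0$. Choosing $n = n_k$ running through $L_3$ strictly increasingly and keeping the supports increasing, I obtain a block sequence $\{z_k\}_k$ of $(C,\theta,n_k)$ exact vectors with $n_k\in L_3$. Corollary~\ref{corexactvetorazero} gives $\adz = 0$, Corollary~\ref{corexactvectorbzero} gives $\bdz = 0$, and Proposition~\ref{exactvectorsbvoid} gives $\Bcrossz = 0$. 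Since an exact vector satisfies $\theta \leqslant \|z_k\| < 7C$ by its definition together with Remark~\ref{exactvectornorm}, the sequence $\{z_k\}_k$ is seminormalized; normalizing multiplies each $z_k$ by a factor in $[1/(7C),1/\theta]$, a bounded rescaling that preserves each of the three conditions. This settles the case $\bdx > 0$.

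It remains to treat $\adx = 0$ and $\bdx = 0$. Here Lemma~\ref{lemabzeroaverages} applies and produces an increasing sequence $\{F_k\}_k$ with $\#F_k\leqslant\min F_k$ and $\#F_k\to\infty$ such that the averages $y_k = \frac{1}{\#F_k}\sum_{i\in F_k}x_i$ satisfy $\Bcrossy = 0$; since each $y_k$ is a convex combination of the $x_i$, Remark~\ref{indexconvex} yields $\ady = 0$ and $\bdy = 0$. Thus $\{y_k\}_k$ carries all three properties, and the only remaining point is to pass to a \emph{normalized} sequence.

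The main obstacle is precisely this last point: averaging over sets of size $\#F_k\to\infty$ can drive $\|y_k\|$ to $0$ --- this is exactly the $c_0$ spreading model behaviour the construction is designed to exhibit --- and then normalization, which rescales $y_k$ by the unbounded factor $\|y_k\|^{-1}$, need not preserve $\ady = 0$, $\bdy = 0$ or $\Bcrossy = 0$. I would resolve this by deciding whether averaging is needed at all. If $\{x_k\}_k$ already satisfies $\Bcrossx = 0$ (after passing, via Lemma~\ref{lemabzerobranchcountable}, to a subsequence normed by at most countably many branches and then discarding them), then the normalized sequence $\{x_k\}_k$ itself works and no averaging is performed. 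Otherwise some branch $\e$-norms $\{x_k\}_k$ (Remark~\ref{remarkseparate}); in the $b_+$ case Proposition~\ref{bplusellone} shows $\{x_k\}_k$ generates an $\ell_1$ spreading model, which forces a uniform lower bound $\|y_k\|\gtrsim\e$ on the averages and makes the normalization harmless, while the $b_-$ case must be handled by extracting a lower norm estimate from the type II$_-$ functionals of the norming branch, tracking the Pe\l czy\' nski weights $\la_q$. Verifying that the averages remain seminormalized after the norming branches are annihilated is where I expect the bulk of the work to lie.
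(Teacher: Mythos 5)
Your overall architecture matches the paper's: kill the $\al$-index via $(C,\theta,n)$ vectors, handle $\bdx>0$ by passing to $(C,\theta,n_k)$ exact vectors with $n_k\in L_3$ and invoking Corollaries \ref{corexactvetorazero}, \ref{corexactvectorbzero} and Proposition \ref{exactvectorsbvoid}, and in the remaining case use Lemma \ref{lemabzeroaverages} together with the $\ell_1$ spreading model from Proposition \ref{bplusellone} to keep the averages seminormalized. You have also correctly identified the real difficulty: when the only norming branches act through their $b_-$ parts, nothing prevents the averages $y_k=\frac{1}{\#F_k}\sum_{i\in F_k}x_i$ from being norm null. But your proposed resolution of that difficulty is a genuine gap, not a deferred computation. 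Lemma \ref{lemabzeroaverages} is engineered precisely so that each $f_q\pm g_q$ meets at most one $x_i$ with $i\in F_k$, so the branch contributes only $O(1/\#F_k)$ to $y_k$; and assembling the functionals $\frac{1}{2}(f_{q_i}-g_{q_i})$ for the various $i\in F_k$ into a single type II$_-$ functional is constrained by $\|\sum_{q}\la_qu_q^*\|_u\leqslant 1$, where the positions $q$ are dictated by the branch and need not correspond to a $c_0$-like stretch of the Pe\l czy\'nski basis; moreover the signs of $(f_{q_i}-g_{q_i})(x_i)$ need not cooperate. So ``extracting a lower norm estimate from the type II$_-$ functionals'' does not go through, and this is exactly the case your argument cannot close. (A smaller issue: you cannot ``discard'' the countably many norming branches produced by Lemma \ref{lemabzerobranchcountable}; a branch that $\e$-norms a sequence $\e$-norms every further subsequence, which is why the paper must confront Cases 1 and 2 rather than eliminate them.)

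The paper's way out is a preprocessing step plus a sign flip that you are missing. Since $\X$ contains no copy of $c_0$, one first replaces $\{x_k\}_k$ by normalized blocks $z_k=\sum_{i\in G_k}c_ix_i$ with $\max\{|c_i|:i\in G_k\}\to 0$. If some $b_-$ $\e$-norms $\{z_k\}_k$ via $|(f_{q_k}-g_{q_k})(z_k)|>\e$, one splits $G_k$ at the last $i$ whose range meets $\ran f_{q_k}$ and flips the sign of one of the two halves; because only the single coefficient $c_{i_k}$ straddles the cut, the flipped and renormalized vector $w_k$ satisfies $(f_{q_k}+g_{q_k})(w_k)>\e/4$, while Remarks \ref{indexinterval} and \ref{indexsum} preserve $\adw=0$ and $\bdw=0$. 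This converts $b_-$-norming into $b_+$-norming, where Proposition \ref{bplusellone} yields the $\ell_1$ spreading model that makes the averages of Lemma \ref{lemabzeroaverages} seminormalized. Without this reduction (or a substitute for it), your proof is incomplete precisely in the case you flagged.
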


\begin{proof}
Since $\X$ does not contain a copy of $c_0$, we may choose $\{z_k\}_k$ a
normalized block sequence of $\{x_k\}_k$, such that if $z_k =
\sum_{i\in G_k}c_ix_i$, then $\lim_k\max\{|c_i|: i\in G_k\} = 0$.

If $\adz = 0, \bdz = 0$ and $\Bcrossz = 0$, then $\{z_k\}_k$ is the desired sequence. Otherwise,
we distinguish three cases.\vskip6pt

\noindent{\em Case 1:} $\adz = 0, \bdz = 0$ and there exist
$b\in\mathcal{B}$, $\e>0$ such that $b_+$ \;$\e$-norms
$\{z_k\}_k$.

Using Proposition \ref{bplusellone} and passing, if necessary, to a
subsequence, we may assume that $\{z_k\}_k$ generates an $\ell_1$
spreading model. Apply Lemma \ref{lemabzeroaverages} to find
 an increasing sequence of subsets of the natural
numbers $\{F_k\}_k$ with $\#F_k\leqslant\min F_k$ for all $k\inn$ with
$\lim_k\#F_k = \infty$ such that if $y_k =
\frac{1}{\#F_k}\sum_{i\in F_k}z_i$, then $\Bcrossy = 0$.

Since $\{z_k\}_k$ generates an $\ell_1$ spreading model, we have
that $\{y_k\}_k$ is seminormalized. Moreover Remark
\ref{indexconvex} yields that $\ady = 0$ as well as $\bdy = 0$. We
conclude that if $y_k^\prime = \frac{1}{\|y_k\|}y_k$, then
$\{y_k^\prime\}_k$ is the desired sequence.\vskip6pt

\noindent{\em Case 2:} $\adz = 0, \bdz = 0$ and there exist
$b\in\mathcal{B}$, $\e>0$ such that $b_-$ \;$\e$-norms
$\{z_k\}_k$.

If $b = \{f_{q},g_{q}\}_{q=1}^\infty$ passing if necessary to
a subsequence, we may assume that for every $k\inn$ there exists
$q_k\inn$ such that $|(f_{q_k} - g_{q_k})(z_k)| > \e$ and
$\max\{|c_i|: i\in F_k\} < \frac{\e}{2}$.

Fix $k\inn$ and set $i_k = \max\{i\in G_k: \ran f_{q_k}\cap\ran x_i\neq\varnothing\}, G_k^1 = \{i\in G_k: i\leqslant
i_k\}$ and $G_k^2 = \{i\in G_k: i> i_k\}$. Set
\begin{equation*}
z_k^\prime = \sgn\big(f_{q_k}(z_k)\big)\sum_{i\in
G_k^1}c_ix_i + \sgn\big(g_{q_k}(z_k)\big)\sum_{i\in G_k^2}c_ix_i
\end{equation*}
Observe the following.
\begin{eqnarray*}
f_{q_k}(z_k^\prime) &=& |f_{q_k}(z_k)|\\
g_{q_k}(z_k^\prime) &>& |g_{q_k}(z_k)| - |c_{i_k}| >
|g_{q_k}(z_k)| - \frac{\e}{2}
\end{eqnarray*}
\begin{equation*}
\frac{1}{2}\leqslant\|z_k^\prime\|\leqslant 2
\end{equation*}
Combining the above we conclude that by setting $w_k =
\frac{1}{\|z_k^\prime\|}z_k^\prime$, we have that $(f_{q_k} +
g_{q_k})(w_k)
> \frac{\e}{4}$, i.e. $b_+$\; $\frac{\e}{4}$-norms
$\{w_k\}_k$. Moreover Remarks \ref{indexinterval} and
\ref{indexsum} yield that $\adw = 0$ as well as $\bdw = 0$, hence
this case has been reduced to the previous one.\vskip6pt

\noindent{\em Case 3:} $\adz > 0$ or $\bdz > 0$.

Apply Proposition \ref{ell1nandexactvectorexist} to construct a
sequence  of $(C,\theta,n_k)$ vectors $\{y_k^\prime\}_k$ with
$\{n_k\}_k$ strictly increasing. Set $y_k =
\frac{1}{\|y_k\|}y_k^\prime$. Corollary \ref{corexactvetorazero}
yields that $\ady = 0$ and passing, if necessary to a subsequence,
$\{y_k\}_k$ is $(C,\{n_k\}_k)\;\al$-RIS.

Assume that $\bdy = 0$. Then this case is reduced either to case
1, or to case 2.

If on the other hand $\bdy > 0$, apply Proposition
\ref{ell1nandexactvectorexist} to construct a sequence
 of $(C,\theta,n_k)$ exact vectors $\{w_k^\prime\}_k$ with $n_k\in
L_3$ for all $k\inn$ and $\{n_k\}_k$ strictly increasing. Set $w_k
= \frac{1}{\|w^\prime_k\|}w_k$. Corollaries
\ref{corexactvetorazero}, \ref{corexactvectorbzero} and
Proposition \ref{exactvectorsbvoid} yield that $\{w_k\}_k$ is the
desired sequence.

\end{proof}

The following definition is a slight variation of Definition 4.1 from \cite{AM}.

\begin{dfn}
Let $x_1<x_2<x_3$ be vectors in $\X$, $f =
\substack{+\\[-2pt]-}E\big(\frac{1}{2}\sum_{q\in F}(f_q + g_q)\big)$ be a
functional of type II$_+$ ( or $f = \substack{+\\[-2pt]-}E\big(\frac{1}{2}\sum_{q\in
F}\la_q(f_q - g_q)\big)$ be a functional of type II$_-$), such
that $\supp f\cap\ran x_i\neq\varnothing$, for $i=1,2,3$. Set $q_0
= \min\{q\in F: \ran(f_q + g_q) \cap \ran x_2\neq\varnothing\}$.
If $\ran (f_{q_0} + g_{q_0}) \cap \ran x_3 = \varnothing$, then we
say that $f$ separates $x_1,x_2,x_3$.
\end{dfn}

\begin{lem}
Let $\{n_k\}_k$ be a strictly increasing sequence of natural
numbers satisfying the following. For every $m\inn$, there exists
a special sequence $\{f_{q}^m,g_{q}^m\}_{q=1}^{d_m}$ such that
$\{n_k: k=1,\ldots,m\} \subset \{w(f_{q}^m): q=1,\ldots,d_m\}$.
Then there exists $b =
\{f_{q},g_{q}\}_{q=1}^\infty\in\mathcal{B}$, such that $\{n_k:
k\inn\} \subset \{w(f_{q}): q\inn\}$.\label{weightsb}
\end{lem}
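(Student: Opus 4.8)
The plan is to build the branch $b$ by gluing together canonically determined initial segments of the given special sequences, exploiting the injectivity of the coding function $\sigma$. The guiding observation is that, by condition (3) in the definition of a special sequence, for any special sequence $\{f_q,g_q\}_{q=1}^{d}$ and any $1<q\leqslant d$ one has $w(f_q)=\sigma(f_1,g_1,\ldots,f_{q-1},g_{q-1})\in L_2$; since $\sigma$ is one to one, the entire prefix $(f_1,g_1,\ldots,f_{q-1},g_{q-1})$ is recovered from the single number $w(f_q)$. Thus a weight lying in $L_2$ ``remembers'' the whole special sequence that precedes it, and this is what will let me treat the a priori unrelated sequences $s_m$ coherently.

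First I would dispose of the role of $L_1$. In any special sequence exactly one weight, namely $w(f_1)$, lies in $L_1$, all others lying in $L_2$. Hence at most one term of $\{n_k\}_k$ can belong to $L_1$: if $n_j\in L_1$, then for every $m\geqslant j$ the sequence $s_m=\{f_q^m,g_q^m\}_{q=1}^{d_m}$ furnished by the hypothesis has $w(f_1^m)=n_j$. For every $n_k\in L_2$ I would set $P(n_k)=\sigma^{-1}(n_k)$, a fixed finite tuple of functionals. The key point, forced by injectivity, is that $P(n_k)$ coincides with the prefix of $s_m$ preceding the pair of weight $n_k$ for every $m\geqslant k$; in particular $P(n_k)$ is itself a special sequence and is independent of $m$.

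Next I would check that the family $\{P(n_k):n_k\in L_2\}$ is linearly ordered by the initial-segment relation. Given $k,k'$, pick $m\geqslant\max\{k,k'\}$; both $P(n_k)$ and $P(n_{k'})$ are initial segments of the single special sequence $s_m$, hence comparable. The tuples $P(n_k)$ are pairwise distinct, since $\sigma(P(n_k))=n_k$ and the $n_k$ are distinct, and there are infinitely many of them because at most one $n_k$ lies in $L_1$. A family, linearly ordered by inclusion, of infinitely many pairwise distinct finite tuples has strictly unbounded lengths, so the union $b=\bigcup_{n_k\in L_2}P(n_k)$ is an infinite sequence of pairs $\{f_q,g_q\}_{q=1}^{\infty}$.

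Finally I would verify that $b\in\mathcal{B}$ and carries the required weights. Every finite initial segment of $b$ equals an initial segment of some $P(n_k)$, hence of some $s_m$, and an initial segment of a special sequence is again a special sequence; thus $\{f_q,g_q\}_{q=1}^{d}$ is a special sequence for all $d$ and $b\in\mathcal{B}$. For each $n_k\in L_2$, choosing $P(n_{k'})\supsetneq P(n_k)$ (possible since the lengths are unbounded), condition (3) forces the weight of the pair immediately following $P(n_k)$ in $P(n_{k'})\subset b$ to equal $\sigma(P(n_k))=n_k$; and if some $n_j\in L_1$, then $w(f_1)=n_j$, because the first pair of $b$ is the first pair of some $s_m$. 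Hence $\{n_k:k\inn\}\subset\{w(f_q):q\inn\}$. The main obstacle, and the crux of the argument, is the global coherence established in the third paragraph: a priori the sequences $s_m$ bear no relation to one another, and it is precisely the injectivity of $\sigma$ that makes the prefixes $P(n_k)$ canonical and pairwise comparable, so that they assemble into a single branch rather than an uncontrolled branching tree.
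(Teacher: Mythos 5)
Your proof is correct and rests on the same key mechanism as the paper's: the injectivity of $\sigma$ forces the prefix preceding any pair of weight $n_k$ to be canonically determined, so these prefixes cohere into a single branch. The paper packages this as an inductive construction (extracting the next segment from $s_{m+2}$ at stage $m$ while maintaining the invariant $\sigma(f_1,g_1,\ldots,f_{p_l},g_{p_l})=n_{l+1}$), whereas you take the union of the chain $\{\sigma^{-1}(n_k)\}$ all at once; the mathematical content is the same.
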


\begin{proof}
We construct $b$ by induction. Let $m\inn$ and suppose that we
have chosen natural numbers $1\leqslant p_1 <\cdots<p_m$ and a
special sequence $\{f_{q},g_{q}\}_{q=1}^{p_m}$ such that the
following are satisfied. For $1\leqslant l\leqslant m$
\begin{itemize}

\item[(i)] $\{n_k: k=1,\ldots,l\} \subset \{w(f_{q}):
q=1,\ldots,p_l\}$

\item[(ii)] $\sigma(f_1,g_1,f_2,g_2\ldots,f_{p_l},g_{p_l}) = n_{l+1}$

\end{itemize}

Since $\{n_k: k =1,\ldots,m+2\}\subset \{w(f_{q}^{m+2}):
q=1,\ldots,d_{m+2}\}$, there exist $1 < q_0 < q_1\leqslant
d_{m+2}$, such that $w(f_{q_0}^{m+2}) = n_{m+1}$ and
$w(f_{q_1}^{m+2}) = n_{m+2}$

Then
\begin{equation*}
\sigma(f_1^{m+2},g_1^{m+2},\ldots,f_{q_1-1}^{m+2},g_{q_1-1}^{m+2})
= n_{m+2}
\end{equation*}

Set $p_{m+1} = q_1 - 1$. It remains to be shown that $p_m <
p_{m+1}$ and that $\{f_{q},g_{q}\}_{q=1}^{p_m} =
\{f_{q}^{m+2},g_{q}^{m+2}\}_{q=1}^{p_m}$. Then, $\{f_q, g_q\}_{q=1}^{p_{m+1}} = \{f_{q}^{m+2},g_{q}^{m+2}\}_{q=1}^{p_{m+1}}$ will be the desired special sequence.

Since
\begin{eqnarray*}
n_{m+1} &=& \sigma(f_1,g_1,\ldots,f_{p_m},g_{p_m})\\
w(f^{m+2}_{q_0}) &=&
\sigma(f_1^{m+2},g_1^{m+2},\ldots,f_{q_0-1}^{m+2},g_{q_0-1}^{m+2})
\end{eqnarray*}
and $w(f^{m+2}_{q_0}) = n_{m+1}$, by the fact that $\sigma$ is
one to one, we conclude that $\{f_{q},g_{q}\}_{q=1}^{p_m} =
\{f_{q}^{m+2},g_{q}^{m+2}\}_{q=1}^{q_0-1}$. Thus, it follows
that $p_m = q_0-1 < q_1 - 1 = p_{m+1}$ and
$\{f_{q},g_{q}\}_{q=1}^{p_m} =
\{f_{q}^{m+2},g_{q}^{m+2}\}_{q=1}^{p_m}$.

\end{proof}

\begin{prp}
Let $\{x_k\}_k$ be a bounded block sequence in $\X$, such that
$\bdx = 0$ and $\Bcrossx = 0$. Then for any $\e>0$, there exists  an infinite
subset of the natural numbers $M$, such that for any $k_1 < k_2 < k_3\in M$,
for any functional $f\in W$ of type II that separates
$x_{k_1},x_{k_2},x_{k_3}$, we have that $|f(x_{k_i})| < \e$, for
some $i\in\{1,2,3\}$.\label{propcombinatorial}
\end{prp}

\begin{proof}
Towards a contradiction, assume that this is not the case. By
using Ramsey theorem \cite{Ra}, we may assume that there exists
$\e>0$ such that for any $k<l<m\inn$, there exists  a
functional of type II $f_{k,l,m}$ that separates $x_k,x_l,x_m$ and
$|f_{k,l,m}(x_k)|>\e, |f_{k,l,m}(x_l)|>\e, |f_{k,l,m}(x_m)|>\e$.
We may also assume that $f_{k,l,m}$ is of type II$_+$, for every
$k<l<m\inn$, or that $f_{k,l,m}$ is of type II$_-$, for every
$k<l<m\inn$. From now on we shall assume the first.

For $1<k<m\inn$, there exists $b_{k,m} =
\{f_{q}^{k,m},g_{q}^{k,m}\}_{q=1}^\infty\in\mathcal{B}$ and  intervals of the natural numbers $E_{k,m}$, with
$f_{1,k,m} = E_{k,m}\bigg(\frac{1}{2}\sum_{q\in
F_{k,m}}(f_{q}^{k,m} + g_{q}^{k,m})\bigg)$. Set

\begin{eqnarray*}
p_{k,m} &=& \min\{q\in F_{k,m}: \ran(f_{q}^{k,m} +
g_{q}^{k,m})\cap x_1\neq\varnothing\}\\
q_{k,m} &=& \min\{q\in F_{k,m}: \ran(f_{q}^{k,m} +
g_{q}^{k,m})\cap x_k\neq\varnothing\}
\end{eqnarray*}

Notice, that for $1<k<m$, since $|f_{1,k,m}(x_1)| > \e$, it
follows that, if $w(f_{p_{k,m}}^{k,m}) = j_{k,m}$
\begin{equation*}
\frac{1}{2^{j_{k,m}}} > \frac{\e}{\|x_1\|\max\supp x_1}
\end{equation*}

By applying Ramsey theorem once more, we may assume that there
exists $j_1\inn$, such that for any $1<k<m$, we have that
$w(f_{p_{k,m}}^{k,m}) = j_1$.

Arguing in the same way and diagonalizing, we may assume that for
any $k>1$, there exists $j_k\inn$ such that for any $m>k$, we have
that $w(f^{k,m}_{q_{k,m}}) = j_k$.

Moreover, for every $1<k<m\inn$, the following holds.
\begin{equation*}
2(\#F_{k,m}) \leqslant \min\supp f_{p_{k,m}}^{k,m}\leqslant
\max\supp x_1
\end{equation*}
Setting $\e^\prime = \frac{4\e}{\max\supp x_1}$, there exists
$r_{k,m}\in F_{k,m}$ such that
\begin{equation}
|E_{k,m}\big(\frac{1}{2}(f_{r_{k,m}}^{k,m} +
g_{r_{k,m}}^{k,m})\big)(x_m)| > \e^\prime
\end{equation}
Since $f_{1,k,m}$ separates $x_1,x_k,x_m$, it follows that
$r_{k,m} > q_{k,m}$.

Set $i_{k,m} = w(f_{r_{k,m}}^{k,m})$ for all $1<k<m\inn$ and
\begin{equation*}
A = \big\{\{k,l,m\}\in[\mathbb{N}\setminus\{1\}]^3: i_{k,m} =
i_{l,m}\big\}
\end{equation*}

Applying Ramsey theorem once more, we may assume that either
$[\mathbb{N}\setminus\{1\}]^3\subset A$ or
$[\mathbb{N}\setminus\{1\}]^3\subset A^c$.

Assume that $[\mathbb{N}\setminus\{1\}]^3\subset A^c$. Then, for
$m>2$, we have that
\begin{equation*}
h_k = \sgn\bigg(E_{k,m}\big(\frac{1}{2}(f_{r_{k,m}}^{k,m} +
g_{r_{k,m}}^{k,m})\big)(x_m)\bigg)E_{k,m}\big(\frac{1}{2}(f_{r_{k,m}}^{k,m}
+ g_{r_{k,m}}^{k,m})\big)
\end{equation*}
are functionals of type II with pairwise disjoint weights
$\hat{w}(h_k)$ and $h_k(x_m) > \e^\prime$ for $k=2,\ldots,m-1$. We
conclude that $\be = \frac{1}{m-2}\sum_{k=2}^{m-1}h_k$ is a
$\be$-average in $W$ of size $s(\be) = m-2$ and $\be(x_m) >
\e^\prime$. Proposition \ref{bindexequivalent} yields that $\bdx >
0$, which is absurd.

Hence, we may assume that $[\mathbb{N}\setminus\{1\}]^3\subset A$,
i.e. for every $m>2$, there exists $i_m\inn$, such that for every
$1<k<m$, $i_{k,m} = i_m$. By the fact that $\sigma$ is one to one,
we conclude that for every $m>2$, by setting
$\{f_{q}^m,g_{q}^m\}_{q=1}^{r_m} = \sigma^{-1}(\{i_m\})$ the
following holds.
\begin{equation}
\{f_{q}^{k,m},g_{q}^{k,m}\}_{q=1}^{r_{k,m}-1} =
\{f_{q}^m,g_{q}^m\}_{q=1}^{r_m},\quad\text{for}\;1<k<m\label{combinatorialeq1}
\end{equation}

Set
\begin{equation*}
C = \big\{\{k,l\}\in[\mathbb{N}\setminus\{1\}]^2: j_k\neq
j_l\big\}
\end{equation*}

Applying Ramsey theorem once more, we may assume that either
$[\mathbb{N}\setminus\{1\}]^2\subset C$ or
$[\mathbb{N}\setminus\{1\}]^2\subset C^c$.

Assume that $[\mathbb{N}\setminus\{1\}]^2\subset C^c$. Then there
exists $j_0\inn$, such that $j_k = j_0$ for all $k>1$. For
$1<k<m$, by \eqref{combinatorialeq1}
$\{f_{q_{k,m}}^{k,m},g_{q_{k,m}}^{k,m}\}\in \{f_{q}^m,g_{q}^m:
q=1,\ldots,r_m\}$. Since for $2<k<m$, $j_2 = j_k$, we conclude
that $\{f_{q_{2,m}}^{2,m},g_{q_{2,m}}^{2,m}\} =
\{f_{q_{k,m}}^{k,m},g_{q_{k,m}}^{k,m}\}$.

Set $h_m = \frac{1}{2}(f_{q_{2,m}}^{2,m} + g_{q_{2,m}}^{2,m})$. By
the fact that $f_{2,m}, f_{m-1,m}$ separate $x_1,x_2,x_m$ and
$x_1,x_{m-1},x_m$ respectively, we conclude that $\ran
x_k\subset\ran h_m$ and $|h_m(x_k)| > \e$ for $k=3,\ldots,m-2$.
Choose $h$ a $w^*$-limit point of $\{h_m\}_m$. Then
$|h(x_k)|\geqslant\e$ for every $k>2$. Corollary \ref{shrinking}
yields a contradiction.

Hence, we may assume that $[\mathbb{N}\setminus\{1\}]^2\subset C$,
and that $\{j_k\}_k$ is strictly increasing. Lemma \ref{weightsb}
and \eqref{combinatorialeq1} yield that there exists $b =
\{f_q,g_q\}_{q=1}^\infty\in\mathcal{B}$, such that $\{j_k:
k\inn\}\subset\{w(f_{q}): q\inn\}$.

We will show that $b$ $\e^\prime$-norms $\{x_k\}_k$, which will
complete the proof. Let $1<k<m\inn$. Arguing as previously, there
exists $t_{k,m}\in F_{k,m}$, such that
$|(f_{t_{k,m}}^{k,m}+g_{t_{k,m}}^{k,m})(x_k)|
> \e^\prime$. Evidently, $q_{k,m}\leqslant t_{k,m} \leqslant r_{k,m}$ Set
\begin{equation*}
D = \{\{k,m\}\in [\mathbb{N}\setminus\{1\}]^2: t_{k,m} < r_{k,m}\}
\end{equation*}

Applying Ramsey theorem one last time, we may assume that either
$[\mathbb{N}\setminus\{1\}]^2\subset D$, or
$[\mathbb{N}\setminus\{1\}]^2\subset D^c$.

If $[\mathbb{N}\setminus\{1\}]^2\subset D^c$, then for $m>3$, by
 \eqref{combinatorialeq1} we have that $t_{m-2,m} = r_{m-2,m} =
r_m + 1$ and
$\{f_1^m,g_1^m,\ldots,f_{r_m}^m,g_{r_m}^m,f_{t_{m-2,m}}^{m-2,m},g_{t_{m-2,m}}^{m-2,m}\}$
is a special sequence.

Similarly, by \eqref{combinatorialeq1} we have that $t_{m-1,m} =
r_{m-1,m} = r_m + 1$ and that
$\{f_1^m,g_1^m,\ldots,f_{r_m}^m,g_{r_m}^m,f_{t_{m-1,m}}^{m-1,m},g_{t_{m-1,m}}^{m-1,m}\}$
is a special sequence.

Since $q_{m-1,m}<r_{m-1,m} = t_{m-1,m}$, we have that there exists
$q\leqslant r_m$, such that
$\{f_{q_{m-1,m}}^{m-1,m},g_{q_{m-1,m}}^{m-1,m}\} = \{f_q^m,
g_q^m\}$.

This means the following.
\begin{eqnarray*}
\max\supp x_{m-2} &<& \min\supp x_{m-1} \leqslant \max\supp
g_{q_{m-1,m}}\\ &=& \max\supp g_{q}^m < \min\supp
f_{t_{m-2,m}}^{m-2,m}
\end{eqnarray*}
We conclude that $\ran(f_{t_{m-2,m}}^{m-2,m} +
g_{t_{m-2,m}}^{m-2,m})\cap\ran x_{m-2} = \varnothing$. This cannot
be the case and hence we conclude that
$[\mathbb{N}\setminus\{1\}]^2\subset D$.

Let $k\inn$. We will show that
$f_{t_{k,k+3}}^{k,k+3}+g_{t_{k,k+3}}^{k,k+3}\in b_+$. First,
observe that by \eqref{combinatorialeq1} and the fact that
$t_{k,k+3} \leqslant r_{k,k+3} - 1 =  r_{k+3}$, we have that
\begin{eqnarray*}
(f_{t_{k,k+3}}^{k,k+3}+g_{t_{k,k+3}}^{k,k+3})&\in&\{f_{q}^{k+3}+g_{q}^{k+3}:
q=1,\ldots,r_{k+3}\}\\
(f_{q_{k+1,k+3}}^{k+1,k+3}+g_{q_{k+1,k+3}}^{k+1,k+3})&\in&\{f_{q}^{k+3}+g_{q}^{k+3}:
q=1,\ldots,r_{k+3}\}\\
(f_{q_{k+2,k+3}}^{k+2,k+3}+g_{q_{k+2,k+3}}^{k+2,k+3})
&\in&\{f_{q}^{k+3}+g_{q}^{k+3}: q=1,\ldots,r_{k+3}\}
\end{eqnarray*}
Thus, we moreover have that
\begin{equation*}
(f_{t_{k,k+3}}^{k,k+3}+g_{t_{k,k+3}}^{k,k+3})\leqslant
(f_{q_{k+1}-1}^{k+1,k+3}+g_{q_{k+1}}^{k+1,k+3})<
(f_{q_{k+2}}^{k+2,k+3}+g_{q_{k+2}}^{k+2,k+3})
\end{equation*}

By the fact that $\sigma$ is one to one, we conclude that
$\{f_{t_{k,k+3}}^{k,k+3},g_{t_{k,k+3}}^{k,k+3}\}\in\sigma^{-1}(\{j_{k+2}\})\subset\big\{\{f_q,g_q\}:q\inn\big\}$.

\end{proof}

\section{$c_0$ spreading models}

In this section we prove that a sequence $\{x_k\}_k$ satisfying
$\Bcrossx = 0$, $\adx = 0$ as well as $\bdx = 0$ has a subsequence
generating a $c_0$ spreading model. This is crucial, as a
spreading model universal sequence is constructed on a sequence
generating a $c_0$ spreading model.

\begin{prp} Let $x_1<\cdots<x_n$ be a seminormalized block sequence in $\X$,
such that $\|x_k\|\leqslant 1$ for $k=1,\ldots,n, n\geqslant 3$
and there exist $n+3\leqslant j_1 <\cdots< j_n$ strictly
increasing natural numbers, such that the following are satisfied.
\begin{enumerate}

\item[(i)] For any $k_0\in\{1,\ldots,n\}$, for any $k\geqslant
k_0, k\in\{1,\ldots,n\}$, for any $\{\al_q\}_{q=1}^d$ very fast
growing and $\Sj$-admissible sequence of $\al$-averages, with
$j<j_{k_0}$ and $s(\al_1) > \min\supp x_{k_0}$, we have that
$\sum_{q=1}^d|\al_q(x_k)| < \frac{1}{n\cdot2^n}$.

\item[(ii)] For any $k_0\in\{1,\ldots,n\}$, for any $k\geqslant
k_0, k\in\{1,\ldots,n\}$, for any $\{\be_q\}_{q=1}^d$ very fast
growing and $\Sj$-admissible sequence of $\be$-averages, with
$j<j_{k_0}$ and $s(\be_1) > \min\supp x_{k_0}$, we have that
$\sum_{q=1}^d|\be_q(x_k)| < \frac{1}{n\cdot2^n}$.

\item[(iii)] For $k = 1,\ldots,n-1$, the following holds:
$\frac{1}{2^{j_{k+1}}}\max\supp x_k < \frac{1}{2^n}$.

\item[(iv)] For any $1\leqslant k_1 < k_2 < k_3\leqslant n$, for
any functional $f\in W$ of type II that separates
$x_{k_1},x_{k_2},x_{k_3}$, we have that $|f(x_{k_i})| <
\frac{1}{n\cdot2^n}$, for some $i\in\{1,2,3\}$.

\end{enumerate}

Then $\{x_k\}_{k=1}^n$ is equivalent to the unit vector basis of $\ell_\infty^n$,
with an upper constant $4 + \frac{5}{2^n}$. Moreover, for any
functional $f\in W$ of type I$_\al$ with weight $w(f) = j < j_1$,
we have that $|f(\sum_{k=1}^nx_k)| < \frac{4 +
\frac{6}{2^n}}{2^j}$.\label{finitec0}
\end{prp}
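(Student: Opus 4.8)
The goal is to show that, under hypotheses (i)–(iv), the finite block sequence $\{x_k\}_{k=1}^n$ is $C$-equivalent to the $\ell_\infty^n$ basis with $C = 4 + \tfrac{5}{2^n}$, together with the refined type~I$_\al$ estimate. The lower $\ell_\infty$ estimate is immediate and costless: since $\|x_k\|\leqslant 1$ and we may test against a single functional realizing $\|x_{k_0}\|$ for $k_0$ achieving $\max_k|c_k|$, we get $\|\sum_k c_k x_k\|\geqslant c\max_k|c_k|$ for the seminormalization constant $c$. So the entire content is the \emph{upper} estimate: I must bound $f(\sum_{k=1}^n c_k x_k)$ uniformly over all $f\in W$, where $\max_k|c_k|\leqslant 1$. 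The strategy is a structural induction on the build-up of $W$, proved simultaneously for all functional types, exactly in the spirit of the norm evaluations inherited from \cite{AM}.

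First I would set up the induction on the complexity of $f\in W$, distinguishing the cases coming from the definition of the norming set: type~0, type~I$_\al$, type~I$_\be$, type~II$_+$, type~II$_-$, and convex combinations. For type~0 and convex combinations the bound is trivial or follows by convexity. For type~I$_\al$ and type~I$_\be$ functionals $f = \tfrac{1}{2^w}\sum \al_q$ (resp.\ $\be_q$) of weight $w$, the key is to split the averages $\al_q$ according to whether their size $s(\al_q)$ is large or small relative to $\min\supp x_k$, and to invoke hypotheses (i) and (ii): for each fixed $x_k$, the very fast growing $\Sj$-admissible averages with small weight contribute at most $\tfrac{1}{n\cdot 2^n}$, while the averages of large size are handled by the $\al$-, $\be$-index vanishing encoded in (i)/(ii). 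This is precisely where the refined claim ``$|f(\sum x_k)| < (4+\tfrac{6}{2^n})/2^j$ for $w(f)=j<j_1$'' gets proved, and it feeds back into the induction as the type~I sub-estimate.

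The genuinely new case — and the main obstacle — is the type~II functional $f = E\big(\tfrac12\sum_{q\in F}(f_q+g_q)\big)$ (or the II$_-$ analogue). Here I cannot merely use the index hypotheses; I must exploit the \emph{separation} hypothesis (iv) together with the special-sequence structure. The idea is: for a fixed $x_k$, at most three of the indices $k_1<k_2<k_3$ can be ``separated'' by $f$ in the technical sense, and (iv) forces $f$ to be small on at least one of each such triple. Concretely, I would argue that the set of $k$ on which $|f(x_k)|$ is large is controlled: the $f_q+g_q$ whose range meets $x_k$ are governed by $2(\#F)\leqslant\min\supp f_{\min F}\leqslant\max\supp x_1$, a Schreier-type admissibility condition, so the number of $x_k$ that $f$ can ``see'' with large value is bounded, and summing $\tfrac12(f_q+g_q)(x_k)$ over $k$ telescopes against the $\ell_\infty$ coefficients to give a bound of order $4$. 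The constant $4$ (rather than something larger) will come from the factor $\tfrac12$ in the type~II coefficient combined with the fact that each $x_k$ is charged by at most two consecutive special pairs plus the separation cut-off from (iv); the residual $\tfrac{5}{2^n}$ collects all the small error terms from (i)–(iii).

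The cleanest way to organize the type~II estimate is to fix $c_1,\dots,c_n$ with $|c_k|\leqslant 1$ and write $f(\sum_k c_k x_k) = \tfrac12\sum_{q\in F}\sum_k c_k\,(f_q+g_q)(x_k)$, then bound the inner sums by partitioning the indices $k$ into those where $(f_q+g_q)$ falls entirely inside $\ran x_k$ (contributing via $\|x_k\|\leqslant 1$) versus those where it straddles a boundary (contributing a type~I$_\al$ value, bounded by the refined estimate already proved, using $w(f_q)=w(g_q)<j_1$ whenever relevant). The separation hypothesis is what prevents the straddling terms from accumulating across three or more consecutive $x_k$'s, and condition (iii) ensures the boundary-crossing type~I contributions decay geometrically in $\tfrac{1}{2^n}$. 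Assembling these — the $\ell_\infty$ main term giving $4$ and the geometric/Schreier error terms giving $\tfrac{5}{2^n}$ — yields the claimed upper constant, and the type~I refinement drops out as the $w(f)=j<j_1$ special case.
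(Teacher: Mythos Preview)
Your inductive framework is right, and the type~I$_\al$/I$_\be$ case sketch is essentially what the paper does. But there is a genuine gap in the type~II case, and it stems from an incomplete inductive hypothesis.

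First, the missing auxiliary estimate. You carry only two statements through the induction: the global bound $4+\tfrac{5}{2^n}$ and the refined bound $(4+\tfrac{6}{2^n})/2^j$ for type~I$_\al$ of weight $j<j_1$. The paper carries a third: for any type~I$_\al$ (or I$_\be$) functional of weight $w(f)\geqslant 3$, one has the much sharper bound $|f(\sum_k c_k x_k)| < (1+\tfrac{2}{2^n})\max_k|c_k|$. This comes out of Cases~2 and~3 of the type~I analysis (weight lying in some interval $[j_{k_0},j_{k_0+1})$ or above $j_n$), and it is indispensable for closing the type~II estimate with constant~$4$. Without it you cannot get below roughly $8$ in the type~II case.

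Second, your type~II argument itself does not work as written. You propose to use ``$w(f_q)=w(g_q)<j_1$ whenever relevant'' to invoke the refined type~I estimate, but there is no reason for the weights in a special sequence to lie below $j_1$; they live in $L_0$ and can be arbitrarily large. The correct mechanism is different. Set $E=\{k:|f(x_k)|\geqslant \tfrac{1}{n2^n}\}$ and $E_1=\{k\in E:$ at least two pairs $(f_q+g_q)$ meet $\ran x_k\}$. The separation hypothesis~(iv) forces $\#E_1\leqslant 2$: if $k_1<k_2<k_3\in E_1$ then $f$ separates $x_{k_1},x_{k_2},x_{k_3}$, contradicting that all three lie in $E$. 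The same argument shows that the set $J$ of indices $q$ for which $(f_q+g_q)$ meets some $x_k$ with $k\in E\setminus E_1$ also has $\#J\leqslant 2$. Now each $f_q,g_q$ is type~I$_\al$ of weight in $L_0$, hence weight $>9\geqslant 3$, so the auxiliary estimate gives $|\tfrac12(f_q+g_q)(\sum_{k\in E\setminus E_1}c_kx_k)|<1+\tfrac{2}{2^n}$ for each of the at most two $q\in J$, contributing $2+\tfrac{4}{2^n}$. The set $E_1$ contributes at most $2$, and the complement of $E$ contributes at most $n\cdot\tfrac{1}{n2^n}=\tfrac{1}{2^n}$. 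Summing gives $4+\tfrac{5}{2^n}$. Your ``inside vs.\ straddling'' partition and the bound $2(\#F)\leqslant\max\supp x_1$ play no role.
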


\begin{proof}
As in the proof of Proposition 4.7 from \cite{AM}, we will
inductively prove, that for any $\{c_k\}_{k=1}^n\subset[-1,1]$ the
following hold.
\begin{enumerate}

\item[(i)] For any $f\in W$, we have that $|f(\sum_{k=1}^nc_kx_k)|
< (4 + \frac{5}{2^n})\max\{|c_k|:k=1,\ldots,n\}$.

\item[(ii)] If $f$ is of type I$_\al$ and $w(f)\geqslant 3$, then
$|f(\sum_{k=1}^nc_kx_k)| < (1 +
\frac{2}{2^n})\max\{|c_k|:k=1,\ldots,n\}$.

\item[(iii)] If $f$ is of type I$_\al$ and $w(f) = j < j_1$, then
$|f(\sum_{k=1}^nc_kx_k)| < \frac{4 +
\frac{6}{2^n}}{2^j}\max\{|c_k|:k=1,\ldots,n\}$.

\end{enumerate}

For any functional $f\in W_0$ the inductive assumption holds.
Assume that it holds for any $f\in W_m$ and let $f\in W_{m+1}$. If
$f$ is a convex combination, then there is nothing to prove.

Assume that $f$ is of type I$_\al, f =
\frac{1}{2^j}\sum_{q=1}^d\al_q$, where $\{\al_q\}_{q=1}^d$ is a
very fast growing and $\mathcal{S}_j$-admissible sequence of
$\al$-averages in $W_m$.

Set $k_1 = \min\{k: \ran f\cap\ran x_k\neq\varnothing\}$ and $q_1
= \min\{q: \ran \al_q\cap\ran x_{k_1}\neq\varnothing\}$.

We distinguish 3 cases.\vskip3pt

\noindent {\em Case 1:} $j<j_1$.

For $q>q_1$, we have that $s(\al_q) > \min\supp x_{k_1}$,
therefore we conclude that
\begin{equation}
\sum_{q>q_1}|\al_q(\sum_{k=1}^nc_kx_k)| <
\frac{1}{2^n}\max\{|c_k|:k=1,\ldots,n\} \label{prop4.6eq1}
\end{equation}
while the inductive assumption yields that
\begin{equation}
|\al_{q_1}(\sum_{k=1}^nc_kx_k)| < (4 +
\frac{5}{2^n})\max\{|c_k|:k=1,\ldots,n\} \label{prop4.6eq2}
\end{equation}
Then \eqref{prop4.6eq1} and \eqref{prop4.6eq2} allow us to
conclude that
\begin{equation}
|f(\sum_{k=1}^nc_kx_k)| < \frac{4 +
\frac{6}{2^n}}{2^j}\max\{|c_k|:k=1,\ldots,n\}\label{prop4.6eqx1}
\end{equation}
Hence, (iii) from the inductive assumption is satisfied.\vskip3pt

\noindent {\em Case 2:} There exists $k_0 < n$, such that
$j_{k_0}\leqslant j < j_{k_0 + 1}$.

Arguing as previously we get that
\begin{equation}
|f(\sum_{k>k_0}c_kx_k)| < \frac{4 +
\frac{6}{2^n}}{2^{j_{k_0}}}\max\{|c_k|:k=1,\ldots,n\} <
\frac{1}{2^n}\max\{|c_k|:k=1,\ldots,n\} \label{prop4.6eq3}
\end{equation}
and
\begin{equation}
|f(\sum_{k<k_0}c_kx_k)| < \frac{1}{2^n}\max\{|c_k|:k=1,\ldots,n\}
\label{prop4.6eq4}
\end{equation}
Using \eqref{prop4.6eq3}, \eqref{prop4.6eq4}, the fact that
$|f(x_{k_0})| \leqslant 1$, we conclude that
\begin{equation}
|f(\sum_{k=1}^nc_kx_k)| < (1 +
\frac{2}{2^n})\max\{|c_k|:k=1,\ldots,n\}\label{prop4.6eqx2}
\end{equation}

\noindent {\em Case 3:} $j\geqslant j_n$

By using the same arguments, we conclude that
\begin{equation}
|f(\sum_{k=1}^nc_kx_k)| < (1 +
\frac{1}{2^n})\max\{|c_k|:k=1,\ldots,n\}\label{prop4.6eqx3}
\end{equation}

Then \eqref{prop4.6eqx1}, \eqref{prop4.6eqx2} and
\eqref{prop4.6eqx3} yield that (ii) from the inductive assumption
is satisfied.

If $f$ is of type I$_\be$, then the proof is exactly the same,
therefore assume that $f$ is of type II$_+$, $f =
\frac{1}{2}\sum_{q\in F}^d(f_q + g_q)$, where $\{f_q,g_q\}_{q\in
F}$ are functionals of type I$_\al$. Set
\begin{eqnarray*}
E &=& \{k: |f(x_k)| \geqslant \frac{1}{n\cdot2^n}\}\\
E_1 &=& \{k\in E:\;\text{there exist at least two}\;q\;\text{such
that}\;\ran (f_q+g_q)\cap\ran x_k\neq\varnothing\}
\end{eqnarray*}
Then $\#E_1\leqslant 2$. Indeed, if $k_1<k_2<k_3\in E_1$, then $f$
separates $x_{k_1},x_{k_2}$ and $x_{k_3}$ which contradicts our
initial assumptions.

If moreover we set $J = \{q:$ there exists $k\in E\setminus E_1$
such that $\ran (f_q+g_q)\cap\ran x_k\neq\varnothing\}$, then for
the same reasons we get that $\#J\leqslant 2$.

Since for any $j$, we have that $w(f_q),w(g_q)\in L_0$, we get
that $w(f_j) > 9$, therefore:
\begin{eqnarray}
|f(\sum_{k \in E\setminus E_1}^nc_kx_k)| &<& (2 + \frac{4}{2^n})\max\{|c_k|:k=1,\ldots,n\}\label{prop4.6eq5}\\
|f(\sum_{k \in  E_1}^nc_kx_k)| &\leqslant& 2 \max\{|c_k|:k=1,\ldots,n\}\label{prop4.6eq6}\\
|f(\sum_{k \notin  E}^nc_kx_k)| &\leqslant&
n\cdot\frac{1}{n\cdot2^n}\max\{|c_k|:k=1,\ldots,n\}\label{prop4.6eq7}
\end{eqnarray}
Finally, \eqref{prop4.6eq5} to \eqref{prop4.6eq7} yield the
following.
\begin{equation*}
|f(\sum_{k=1}^nc_kx_k)| < (4 +
\frac{5}{2^n})\max\{|c_k|:k=1,\ldots,n\}
\end{equation*}

If $f$ is of type II$_-$, the proof is exactly the same. This
means that (i) from the inductive assumption is satisfied an this
completes the proof.

\end{proof}

\begin{prp}
Let $\{x_k\}_{k\inn}$ be a seminormalized block sequence in $\X$,
such that $\|x_k\|\leqslant 1$ for all $k\inn$, $\adx = 0$ as well
as $\bdx = 0$ and $\Bcrossx = 0$. Then it has a subsequence, again
denoted by $\{x_k\}_{k\inn}$ satisfying the following.
\begin{enumerate}

\item[(i)] $\{x_k\}_{k\inn}$ generates a $c_0$ spreading model.
More precisely, for any $n\leqslant k_1<\cdots<k_n$, we have that
$\|\sum_{i=1}^nx_{k_i}\| \leqslant 5$.

\item[(i)] There exists a strictly increasing sequence of natural numbers
$\{j_n\}_{n\inn}$, such that for any $n\leqslant k_1 < \cdots
<k_n$, for any functional $f$ of type I$_\al$ with $w(f) = j<j_n$,
we have that
    \begin{equation*}
    |f(\sum_{i=1}^nx_{k_i})| < \frac{5}{2^j}
    \end{equation*}
\end{enumerate}\label{c0spreadingmodel}
\end{prp}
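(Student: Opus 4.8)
The statement is Proposition~\ref{c0spreadingmodel}, which asserts that a seminormalized block sequence $\{x_k\}_k$ satisfying $\adx=0$, $\bdx=0$ and $\Bcrossx=0$ has a subsequence generating a $c_0$ spreading model, with a uniform upper bound of $5$, together with a refined type I$_\al$ estimate. The whole engine for this has already been assembled: Proposition~\ref{finitec0} gives a \emph{finite} $\ell_\infty^n$ estimate (upper constant $4+\tfrac{5}{2^n}$) for any block sequence of length $n$ satisfying four hypotheses (i)--(iv). So the real content of this proposition is to \emph{pass to a subsequence for which hypotheses (i)--(iv) of Proposition~\ref{finitec0} hold simultaneously for every admissible tuple}, and then read off the conclusion.

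**The plan.** The strategy is a diagonalization that extracts, from $\{x_k\}_k$, a subsequence on which the four local conditions of Proposition~\ref{finitec0} can be met for all $n$ at once. I would proceed as follows. First, I use the equivalent reformulations of the index conditions: since $\adx=0$, Proposition~\ref{aindexequivalent} supplies, for each $\e>0$, a threshold $j_0$ and tail index beyond which $\Sj$-admissible very-fast-growing $\al$-averages of size $>j_0$ are controlled; likewise $\bdx=0$ gives the analogous control via Proposition~\ref{bindexequivalent}. Second, since $\Bcrossx=0$ and $\bdx=0$, Proposition~\ref{propcombinatorial} provides, for each $\e>0$, an infinite set $M$ on which every type II functional that separates a triple $x_{k_1},x_{k_2},x_{k_3}$ is small ($<\e$) on at least one of the three vectors --- this is precisely hypothesis (iv). The plan is to run these three selection mechanisms with $\e = \tfrac{1}{n\cdot 2^n}$ along a decreasing sequence of infinite sets $M_1\supset M_2\supset\cdots$, choosing at stage $n$ a new element $k_n$ of the subsequence together with a weight threshold $j_n$ large enough that conditions (i), (ii) hold with the prescribed bound $\tfrac{1}{n\cdot 2^n}$ for all later vectors, and (iii), the support-growth condition $\tfrac{1}{2^{j_{k+1}}}\max\supp x_k<\tfrac1{2^n}$, is arranged by simply taking $j_{n+1}$ huge relative to $\max\supp x_n$. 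A standard diagonal argument then yields one subsequence $\{x_{k_n}\}_n$ and one increasing sequence $\{j_n\}_n$ such that, for every $n$ and every $n\le k_1<\cdots<k_n$ drawn from the subsequence, all four hypotheses of Proposition~\ref{finitec0} are in force.

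**Reading off the conclusion.** Once the subsequence is fixed, for any $n\le k_1<\cdots<k_n$ the finite block $x_{k_1},\dots,x_{k_n}$ satisfies (i)--(iv) of Proposition~\ref{finitec0}, so that proposition gives $\|\sum_{i=1}^n x_{k_i}\|\le (4+\tfrac{5}{2^n})\max|c_k|\le 5$ when all coefficients are $1$, which is exactly part~(i). The lower $c_0$ estimate (seminormalization from below) is immediate since $\{x_k\}_k$ is seminormalized and the basis is bimonotone, giving $\|\sum x_{k_i}\|\ge \|x_{k_1}\|\ge c>0$; combined with the upper bound this is the $c_0$ spreading model. For part~(ii), the ``moreover'' clause of Proposition~\ref{finitec0} directly yields, for a type I$_\al$ functional $f$ with $w(f)=j<j_1\le j_n$, the bound $|f(\sum_{i=1}^n x_{k_i})|<\tfrac{4+6/2^n}{2^j}<\tfrac{5}{2^j}$, which is the asserted estimate once the thresholds are relabeled so that the $j_n$ produced by the diagonalization play the role of the $j_1<\cdots<j_n$ in Proposition~\ref{finitec0}.

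**The main obstacle.** The delicate point is the bookkeeping of the thresholds $\{j_n\}_n$: Proposition~\ref{finitec0} requires a \emph{single} increasing sequence $n+3\le j_1<\cdots<j_n$ that simultaneously governs the $\al$- and $\be$-average estimates (hypotheses (i),(ii)) \emph{and} the support-growth gap (iii), and these must be compatible with the $\e=\tfrac{1}{n\cdot 2^n}$ thresholds coming from the index reformulations, whose validity is only guaranteed for averages of sufficiently large size $s(\al_1)>\min\supp x_{k_0}$. I expect the hard part to be verifying that the diagonal choice can always be made so that the size condition $s(\al_1)>\min\supp x_{k_0}$ demanded in hypotheses (i),(ii) of Proposition~\ref{finitec0} is automatically satisfied by the admissibility and very-fast-growing constraints, and that the separation control from Proposition~\ref{propcombinatorial}, which is stated for a fixed $\e$, can be incorporated into the same nested sequence of infinite sets without conflicting with the index selections. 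This is a routine but careful interleaving of three independent ``smallness on a tail'' statements into one diagonal subsequence, and the estimate then follows mechanically from Proposition~\ref{finitec0}.
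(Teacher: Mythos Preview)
The paper does not include a proof of Proposition~\ref{c0spreadingmodel}; it is stated immediately after Proposition~\ref{finitec0} and the section ends, so the argument is left to the reader as a routine consequence of the finite estimate (Proposition~\ref{finitec0}) together with the combinatorial selection of Proposition~\ref{propcombinatorial} and the index reformulations of Propositions~\ref{aindexequivalent} and~\ref{bindexequivalent}. Your plan is exactly the intended one: diagonalize over $\e=\tfrac{1}{n\cdot 2^n}$ to produce a subsequence and thresholds $\{j_n\}_n$ so that every admissible $n$-tuple satisfies hypotheses (i)--(iv) of Proposition~\ref{finitec0}, and then read off $4+\tfrac{5}{2^n}<5$ and $\tfrac{4+6/2^n}{2^j}<\tfrac{5}{2^j}$ for $n\geqslant 3$ (the cases $n=1,2$ being trivial since $\|x_k\|\leqslant 1$). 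The bookkeeping you flag as the main obstacle is indeed the only content, and it goes through as you describe.
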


\begin{proof}
By repeatedly applying Proposition \ref{propcombinatorial} and
diagonalizing, we may assume that for any $n\leqslant k_1 < k_2 <
k_3$, for any functional $f$ of type II that separates
$x_{k_1},x_{k_2}$ and $x_{k_3}$, we have that $|f(x_{k_i})| <
\frac{1}{n\cdot2^n}$, for some $i\in\{1,2,3\}$.

Use Propositions \ref{aindexequivalent} and \ref{bindexequivalent} to inductively
choose a subsequence of $\{x_k\}_{k\inn}$, again denoted by
$\{x_k\}_{k\inn}$ and  a strictly increasing
sequence of natural numbers $\{j_k\}_{k\inn}$ with $j_k\geqslant k+3$ for all $k\inn$, such
that the following are satisfied.
\begin{enumerate}

\item[(i)] For any $k_0\inn$, for any $k\geqslant k_0$, for any
$\{\al_q\}_{q=1}^d$ very fast growing and
$\mathcal{S}_j$-admissible sequence of $\al$-averages, with
$j<j_{k_0}$ and $s(\al_1) > \min\supp x_{k_0}$, we have that
$\sum_{q=1}^d|\al_q(x_k)| < \frac{1}{k_0\cdot2^{k_0}}$.

\item[(ii)] For any $k_0\inn$, for any $k\geqslant k_0$, for any
$\{\be_q\}_{q=1}^d$ very fast growing and
$\mathcal{S}_j$-admissible sequence of $\be$-averages, with
$j<j_{k_0}$ and $s(\be_1) > \min\supp x_{k_0}$, we have that
$\sum_{q=1}^d|\be_q(x_k)| < \frac{1}{k_0\cdot2^{k_0}}$.

\item[(iii)] For $k\inn$, the following holds:
$\frac{1}{2^{j_{k+1}}}\max\supp x_k < \frac{1}{2^k}$.
\end{enumerate}
It is easy to check that for $n\leqslant k_1<\cdots<k_n$, the
assumptions of Proposition \ref{finitec0} are satisfied.

\end{proof}

\section{Spreading model universal block sequences}

In this section we define exact pairs and exact nodes in $\X$.
Then, using a sequence generating a $c_0$ spreading model, we pass
to a sequence of exact nodes $\{x_k,y_k,f_k,g_k\}$, such that
$\{f_k,g_k\}_{k=1}^\infty$ defines a special branch. Setting $z_k
= x_k -y_k$, we prove that $\{z_k\}_k$ is a spreading model
universal sequence. Using the structure of such sequences, we also
prove that the space $\X$ is hereditarily indecomposable.

\begin{dfn}
A pair $\{x,f\}$, where $x\in\X, f\in W$ is called an $n$-exact
pair if the following hold.

\begin{itemize}

\item[(i)] $f$ is a functional of type I$_\al$ with $w(f) = n$,
$\min\supp x\leqslant \min\supp f$ and $\max\supp
x\leqslant\max\supp f$.

\item[(ii)] There exists  a $(5,1,n)$ exact vector $x^\prime \in\X$
such that $1\geqslant f(x^\prime) > \frac{35}{36}$ and $x =
\frac{x^\prime}{f(x^\prime)}$.

\end{itemize}

\end{dfn}

\begin{rmk}
If $\{x,f\}$ is a $n$-exact pair, then $f(x) = 1$ and by Remark
\ref{exactvectornorm} we have that $1\leqslant \|x\| \leqslant
36$.\label{exactpair}
\end{rmk}

\begin{prp}
Let $\{x_k\}_k$ be a block sequence in $\X$ and $n\inn$. Then
there exists $x$ supported by $\{x_k\}_k$ and $f\in W$ such that
$\{x,f\}$ is an $n$-exact pair.\label{exactpairexist}
\end{prp}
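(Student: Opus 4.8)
===PROOF PROPOSAL===

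The plan is to construct the $n$-exact pair by first extracting from $\{x_k\}_k$ a finite $\al$-RIS on which a type~I$_\al$ functional of weight $n$ can be built to nearly norm a special convex combination. First I would observe that, by passing to a subsequence, the block sequence $\{x_k\}_k$ can be assumed to fall into one of two regimes governed by its indices. If $\adx>0$ or $\bdx>0$, then Proposition~\ref{ell1nandexactvectorexist} directly supplies $(C,\theta,n)$ exact vectors supported by $\{x_k\}_k$; if instead both indices vanish, I would replace $\{x_k\}_k$ by a sequence of normalized $(C,\theta,m_k)$ exact vectors with $\{m_k\}_k$ strictly increasing (again via Proposition~\ref{ell1nandexactvectorexist} after an appropriate preliminary reduction), which by Corollaries~\ref{corexactvetorazero} and~\ref{corexactvectorbzero} forms an $\al$-RIS with vanishing indices. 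In either situation the goal is to produce a single $(5,1,n)$ exact vector $x'$ together with a type~I$_\al$ functional $f$ of weight $n$ that evaluates close to $1$ on it.

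The core construction is as follows. Using Proposition~\ref{ell1nandexactvectorexist} (or its exact-vector variant), I would fix the constant $C=5$ and $\theta=1$ and select a finite $\al$-RIS $\{y_k\}_{k=1}^m$ supported by $\{x_k\}_k$, with strictly increasing associated weights $n_k$, so that $x' = 2^n\sum_{k=1}^m c_k y_k$ is a $(n,\e)$ s.c.c. satisfying the size constraints $\min\supp y_1\geqslant 8C2^{2n}$ and $\e<\tfrac{1}{32C2^{3n}}$ from the definition of a $(C,\theta,n)$ vector. To build the norming functional, I would choose for each $k$ an $\al$-average $\al_k\in W$ with $\al_k(y_k)$ close to $\|y_k\|$ and with very fast growing sizes $s(\al_k)$, arrange the $\al_k$ to be $\Sn$-admissible (which is where condition~(i), $\min\supp y_1\geqslant 8C2^{2n}$, is used to guarantee admissibility against $\Sn$), and set $f=\tfrac{1}{2^n}\sum_{k=1}^m \al_k$. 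This $f$ is then a type~I$_\al$ functional of weight $w(f)=n$, and the support conditions $\min\supp x'\leqslant\min\supp f$, $\max\supp x'\leqslant\max\supp f$ follow from the placement of the $\al_k$ on the ranges of the $y_k$.

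The delicate quantitative point is verifying $f(x')>\tfrac{35}{36}$. Writing $f(x') = 2^n\sum_{k=1}^m c_k f(y_k)$ and separating the diagonal contribution $\tfrac{1}{2^n}\al_k(y_k)$ from the off-diagonal terms $\sum_{k'\neq k}\al_{k'}(y_k)$, I would bound the diagonal part below using that each $\al_k$ nearly norms $y_k$ and that $\sum_k c_k=1$, while controlling the off-diagonal terms via the RIS estimates—precisely, Lemma~\ref{exactvectoraestimate} and the very-fast-growing condition force $\sum_{q}|\al_q(y_k)|$ to be small when the sizes exceed the relevant thresholds. Choosing $\e$ small enough and the sizes $s(\al_k)$ large enough makes the total error less than $\tfrac{1}{36}$. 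Finally I would set $x=x'/f(x')$; then $f(x)=1$ automatically, and conditions~(i) and~(ii) of the definition of an $n$-exact pair hold by construction.

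The main obstacle I expect is the simultaneous satisfaction of the two competing admissibility requirements: the $\al_k$'s must be $\Sn$-admissible so that $f$ is a legitimate weight-$n$ type~I$_\al$ functional, yet their sizes must be large (very fast growing) so that the off-diagonal error in $f(x')$ stays below $\tfrac{1}{36}$, and the $y_k$'s must simultaneously form an $\al$-RIS whose s.c.c.\ has the prescribed $(n,\e)$ smallness. Threading all these inequalities through a single choice of subsequence and a single set of coefficients is the technical heart; I would handle it by first fixing $n$, then choosing the sizes and the index $\e$ in the right order (sizes large relative to $n$, then $\e$ small relative to everything), and invoking Proposition~\ref{sccexistence} to produce the s.c.c.\ coefficients last, on an infinite subset already thinned to meet the admissibility and RIS conditions.
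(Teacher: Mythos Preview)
There is a genuine gap in the step where you ``choose for each $k$ an $\al$-average $\al_k\in W$ with $\al_k(y_k)$ close to $\|y_k\|$ and with very fast growing sizes $s(\al_k)$.'' An $\al$-average of size $s$ is of the form $\tfrac{1}{s}\sum_{j=1}^d f_j$ with $d\leqslant s$ and $f_1<\cdots<f_d$, so for it to take a value close to $1$ on a vector $y$ with $\|y\|\approx 1$, the vector $y$ must itself decompose as a sum of roughly $s$ successive pieces, each nearly normed by one of the $f_j$. Nothing in your reduction to an abstract $\al$-RIS $\{y_k\}$ guarantees this internal structure, and in particular you cannot prescribe $s(\al_k)$ to be very fast growing while keeping $\al_k(y_k)$ bounded away from zero. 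Relatedly, you cannot simply ``fix $\theta=1$'' when invoking Proposition~\ref{ell1nandexactvectorexist}: that proposition outputs some $\theta>0$ depending on the sequence, whereas the definition of an $n$-exact pair demands a $(5,1,n)$ exact vector with $f(x')>\tfrac{35}{36}$.

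The paper resolves both issues simultaneously by a construction you do not invoke: it first passes (via Proposition~\ref{propexistence}) to a normalized block sequence $\{y_k\}$ satisfying the hypotheses of Proposition~\ref{c0spreadingmodel}, and then sets $z_k=\sum_{i\in F_k}y_i$ for Schreier sets $F_k$ with $\#F_{k+1}\geqslant 2^{\max\supp y_{\max F_k}}$. The $c_0$ spreading model gives $1\leqslant\|z_k\|\leqslant 5$ (so $C=5$ and $\theta=1$ come for free), while the $\al$-average $\al_k=\tfrac{1}{\#F_k}\sum_{i\in F_k}g_i$, with each $g_i$ nearly norming $y_i$, has size exactly $\#F_k$ and satisfies $\al_k(z_k)>1-\eta$; the very-fast-growing condition is then built into the choice of the $F_k$. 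The second conclusion of Proposition~\ref{c0spreadingmodel} furnishes the $\al$-RIS estimate for $\{z_k\}$. Your case split on the indices is unnecessary once Proposition~\ref{propexistence} is used, and the ``off-diagonal'' error analysis you anticipate disappears because the $\al_k$ are supported in $\ran z_k$ by construction.
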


\begin{proof}
By Proposition \ref{propexistence} there exists  a
further normalized block sequence $\{y_k\}_k$ satisfying the assumptions of
Proposition \ref{c0spreadingmodel}. Therefore we may choose
 a strictly increasing sequence of natural numbers $\{n_k\}_k$ and
 an increasing sequence of subsets of the natural numbers $\{F_k\}_k$
satisfying the following.

\begin{itemize}

\item[(i)] $\#F_k\leqslant \min F_k$, therefore
$1\leqslant\|\sum_{i\in F_k}y_i\|\leqslant 5$, for all $k\inn$.

\item[(ii)] $\#F_{k+1} \geqslant 2^{\max\supp y_{\max F_k}}$, for
all $k\inn$.

\item[(iii)] For any $j,k\inn$ with $j<n_k$ and $f$ a functional
of type I$_\al$ in $W$ with $w(f) = j$, we have that
$|f(\sum_{i\in F_k}y_i)| < \frac{5}{2^j}$.

\end{itemize}

Setting $z_k = \sum_{i\in F_k}y_i$, by (i) and (iii) we conclude
that $\{z_k\}_k$ is $(5,\{n_k\}_k)\;\al$-RIS. By Proposition
\ref{sccexistence}, for $0<\e< \frac{1}{36\cdot 5\cdot 2^{3n}}$,
there exists  a subset of the natural numbers $G$ with $\min\supp
z_{\min G} \geqslant 8\cdot 5\cdot 2^{2n}$, $n_{\min G} > 2^{2n}$
and $\{c_k\}_{k\in G}\subset [0,1]$, such that $\sum_{k\in
G}c_k^\prime z_k$ is a $(n,\e(1-\e))$ s.c.c.

Setting $c_k = \frac{c_k^\prime}{1-c_{\max G}}$, it is
straightforward to check that $\sum_{k\in G\setminus\{\max G\}}c_k
z_k$ is a $(n,\e)$ s.c.c.

Set $x^\prime = 2^n\sum_{k\in G\setminus\{\max G\}}c_kz_k$. In
order for $x^\prime$ to be a $(5,1,n)$ exact vector, it remains to
be shown that $\|x^\prime\|\geqslant 1$.

We shall prove that for any $\eta > 0$, there exists $f_\eta$ a
functional of type I$_\al$ in $W$ with $\min\supp
x^\prime\leqslant\min\supp f_\eta$, $\max\supp
x^\prime\leqslant\max\supp f_\eta$ and $w(f_\eta) = n$, such that
$1\geqslant f_\eta(x^\prime)
> 1 - \eta$.

Observe that for $k\in G$, there exists $\al_k$ an $\al$-average
in $W$ with $s(\al_k) = \#F_k$, such that $\ran\al_k\subset\ran
z_k$ and $1\geqslant \al_k(z_k) > 1 - \eta$.

By (ii) we conclude that $\{\al_k\}_{k\in G}$ is very fast growing
and since $\ran\al_k\subset \ran z_k$, it is $\Sn$ admissible.
Therefore $f_\eta = \frac{1}{2^n}\sum_{k\in G}\al_k$ is of type
I$_\al$ in $W$ with $\min\supp x^\prime\leqslant\min\supp f_\eta$,
$\max\supp x^\prime\leqslant\max\supp f_\eta$ and $w(f_\eta) = n$.
By doing some easy calculations we conclude that it is the desired
functional, hence $\|x^\prime\| \geqslant 1$.

Moreover, for $0<\eta < 1/36, f = f_\eta$ and $x =
\frac{x^\prime}{f(x^\prime)}$, we have that $\{x,f\}$ is the
desired exact pair.

\end{proof}

\begin{dfn}
A quadruple $\{x,y,f,g\}$ is called an $n$-exact node if $\{x,f\}$
and $\{y,g\}$ are both $n$-exact pairs and $\max\supp f <
\min\supp y$.

A sequence of quadruples $\{x_k,y_k,f_k,g_k\}_{k=1}^\infty$ is
called a dependent sequence, if $\{x_k,y_k,f_k,g_k\}$ is an $n_k$
exact node for all $k\inn$, $\max\supp g_k < \min\supp x_{k+1}$
for all $k\inn$ and $\{f_k,g_k\}_{k=1}^\infty$ is a special
branch.\label{definitionexactnode}
\end{dfn}

\begin{rmks}
If $\{x,y,f,g\}$ is an $n$-exact node, then $(f + g)(x + y) = 2,
(f-g)(x-y) = 2, (f+g)(x-y) = 0, (f+g)(x) = 1, (f+g)(y) = 1$ and
$1\leqslant \|x\;\plusminus\; y\|\leqslant 72$.

If $\{x_k,y_k,f_k,g_k\}_{k=1}^\infty$ is a dependent sequence, by
the above and Proposition \ref{bplusellone}, we conclude that any
spreading model admitted by $\{x_k+y_k\}_k, \{x_k\}_k$ or
$\{y_k\}_k$, is $\ell_1$.

Moreover, for $k_0\inn$ and $k\geqslant k_0$ by Lemma
\ref{exactvectoraestimate} and the fact that $\min\supp
x_{k_0}\geqslant 8\cdot5\cdot2^{2{n_{k_0}}}$, we have that for any
very fast growing and $\Sj$-admissible sequence of $\al$-averages
$\{\al_q\}_{q=1}^d$ with $j<n_{k_0}$ and $s(\al_1) \geqslant
\min\supp x_{k_0}$, we have that
\begin{equation}
\sum_{q=1}^d|\al_q(x_k\;\plusminus\;y_k)| <
\frac{5}{2^{n_{k_0}}}\label{exactnodeaestimate}
\end{equation}

Similarly, by Lemma \ref{exactvectorbestimate}, for  any very fast
growing and $\Sj$-admissible sequence of $\be$-averages
$\{\be_q\}_{q=1}^d$ with $j<n_{k_0}-2$ and $s(\be_1) \geqslant
\min\supp x_{k_0}$, we have that
\begin{equation}
\sum_{q=1}^d|\be_q(x\;\plusminus\;y)| < \frac{5}{2^{n_{k_0}}}
\label{exactnodebestimate}
\end{equation}

\label{remarkexactsequence}
\end{rmks}

\begin{lem}
Let $\{x_k,y_k,f_k,g_k\}_{k=1}^\infty$ be a dependent sequence.
Then for every $k\inn$, if $n_k = w(f_k)$ and $n_{k+1} =
w(f_{k+1})$, the following holds.
\begin{equation}
\frac{1}{2^{n_{k+1}-3}}\max\supp y_k < \frac{1}{2^{n_k}}
\end{equation}\label{exactsequenceinftynorm}
\end{lem}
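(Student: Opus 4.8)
The plan is to unwind the definitions of a special branch, of an exact node, and of the coding function $\sigma$, and then to observe that the asserted inequality is essentially a restatement of the growth condition built into $\sigma$. First I would record that, since $\{f_k,g_k\}_{k=1}^\infty$ is a special branch, the initial segment $(f_1,g_1,\dots,f_k,g_k)$ belongs to $\mathcal{Q}$ and, by condition (3) in the definition of a special sequence, $n_{k+1}=w(f_{k+1})=\sigma(f_1,g_1,\dots,f_k,g_k)$. Applying the defining property of the coding function to this tuple, whose last entry is $g_k$, gives
\[
n_{k+1}>2^{\frac{1}{\|g_k\|_0}}\cdot\max\supp g_k .
\]

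Next I would convert this into a usable lower bound using two elementary facts extracted from the definitions. Since $g_k\in W\subset B_{T^*}$, every coordinate of $g_k$ has modulus at most $1$, so $\|g_k\|_0\le 1$ and hence $2^{1/\|g_k\|_0}\ge 2$; and because $\{y_k,g_k\}$ is an $n_k$-exact pair, we have $\max\supp y_k\le\max\supp g_k$. Combining these yields $n_{k+1}>2\max\supp y_k$. I would then invoke the support constraint coming from the exact-vector structure: as $\{y_k,g_k\}$ is an $n_k$-exact pair, $y_k$ is a normalization of a $(5,1,n_k)$ exact vector, so $\min\supp y_k\ge 8\cdot5\cdot2^{2n_k}$, whence $\max\supp y_k\ge\min\supp y_k>n_k+3$.

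Feeding this last inequality into $n_{k+1}>2\max\supp y_k$ gives
\[
n_{k+1}>2\max\supp y_k=\max\supp y_k+\max\supp y_k>\max\supp y_k+(n_k+3),
\]
so that $n_{k+1}>n_k+3+\max\supp y_k$. Exponentiating and using the trivial bound $2^{\max\supp y_k}>\max\supp y_k$, I obtain $2^{n_{k+1}}>2^{n_k+3}\max\supp y_k$, which rearranges to the claimed estimate $\frac{1}{2^{n_{k+1}-3}}\max\supp y_k<\frac{1}{2^{n_k}}$.

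The main obstacle here is conceptual bookkeeping rather than any genuine analytic difficulty: one must correctly identify $n_{k+1}$ with the $\sigma$-value of the branch's initial segment, track that it is the final entry $g_k$ that drives the growth condition, and locate the two supporting facts ($\|g_k\|_0\le 1$ and $\min\supp y_k\ge 8\cdot5\cdot2^{2n_k}$) inside the definitions of exact pair and exact vector. Once these are assembled the estimate is immediate, so I anticipate no serious technical hurdle.
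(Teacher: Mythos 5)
Your proof is correct and rests on the same idea as the paper's: apply the growth condition $\sigma(f_1,g_1,\dots,f_k,g_k)>2^{1/\|g_k\|_0}\max\supp g_k$ to the initial segment of the special branch and combine it with $\max\supp y_k\leqslant\max\supp g_k$. The only (harmless) divergence is in the final bookkeeping: the paper reads off $n_{k+1}>2^{n_k}\max\supp y_k$ directly from the coding function and then uses $n_{k+1}>9$ to get $2^{n_{k+1}-3}>n_{k+1}$, whereas you use only the cruder bound $2^{1/\|g_k\|_0}\geqslant 2$ and recover the missing factor $2^{n_k}$ from the exact-vector constraint $\min\supp y_k\geqslant 40\cdot 2^{2n_k}$ together with $2^{\max\supp y_k}>\max\supp y_k$; both chains are valid.
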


\begin{proof}
By the definition of the coding function $\sigma$, we have that
$n_{k+1} > 2^{n_k}\max\supp g_k \geqslant 2^{n_k}\max\supp y_k$.

Since $n_{k+1}\in L$, we have that $n_{k+1} > 9$. It easily
follows that $2^{n_{k+1}-3} > n_{k+1}$. Combining this with the
above, we conclude the desired result.
\end{proof}

\begin{prp} Let $Y$ be a block subspace of $\X$. Then there exist
 block sequences $\{x_k\}_k, \{y_k\}_k$ in $Y$ and $b =
\{f_k,g_k\}_{k=1}^\infty\in\mathcal{B}$, such that
$\{x_k,y_k,f_k,g_k\}_{k=1}^\infty$ is a dependent sequence.
\label{exactsequenceexist}
\end{prp}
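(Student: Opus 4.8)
The plan is to construct the dependent sequence by induction on its length, invoking Proposition \ref{exactpairexist} at each stage to produce exact pairs of a prescribed weight supported arbitrarily far to the right inside $Y$. Fix a block sequence whose closed span is $Y$; every exact pair produced by Proposition \ref{exactpairexist} from a tail of this sequence is supported by that tail and hence lies in $Y$. The one delicate point is that, apart from the first weight, the weights of the functionals $f_q,g_q$ are not free: the special sequence condition forces $w(f_q)=w(g_q)=\sigma(f_1,g_1,\ldots,f_{q-1},g_{q-1})$. Since $\sigma$ takes values in $L_2$ and the norming set admits type I$_\al$ functionals of every weight via the $(\frac{1}{2^n},\Sn,\al)$ operation, each forced weight is attainable; the key is to compute it from the already constructed initial segment before building the next pair.

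Concretely, I would proceed as follows. First choose $n_1\in L_1$ and apply Proposition \ref{exactpairexist} to a sufficiently far tail of the block sequence generating $Y$ to obtain an $n_1$-exact pair $\{x_1,f_1\}$ in $Y$; then apply it again, to a tail past $\max\supp f_1$, to obtain an $n_1$-exact pair $\{y_1,g_1\}$ with $\max\supp f_1<\min\supp y_1$. Since for an exact pair $\{y,g\}$ one has $\min\supp g\geqslant\min\supp y$, this yields $f_1<g_1$ and hence an $n_1$-exact node $\{x_1,y_1,f_1,g_1\}$. Assume now that an $n_m$-exact node $\{x_m,y_m,f_m,g_m\}$ has been built with $\{f_k,g_k\}_{k=1}^m$ a special sequence. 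The functionals $f_1,g_1,\ldots,f_m,g_m$ lie in $W$, hence have rational coordinates and, being ordered, form an element of $\mathcal{Q}$; set $n_{m+1}=\sigma(f_1,g_1,\ldots,f_m,g_m)$. Apply Proposition \ref{exactpairexist} to a tail of the generating sequence past $\max\supp g_m$ to get an $n_{m+1}$-exact pair $\{x_{m+1},f_{m+1}\}$ with $\min\supp x_{m+1}>\max\supp g_m$, and once more to get an $n_{m+1}$-exact pair $\{y_{m+1},g_{m+1}\}$ with $\max\supp f_{m+1}<\min\supp y_{m+1}$.

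It remains to check that the resulting quadruples form a dependent sequence. The support arrangements give $g_m<f_{m+1}$ (as $\min\supp f_{m+1}\geqslant\min\supp x_{m+1}>\max\supp g_m$), $f_{m+1}<g_{m+1}$, and $\max\supp g_k<\min\supp x_{k+1}$ for every $k$, so the vectors and functionals are correctly interlaced. By construction $w(f_q)=w(g_q)=n_q$ for all $q$, $w(f_1)=n_1\in L_1$, and $n_q=\sigma(f_1,g_1,\ldots,f_{q-1},g_{q-1})$ for $q>1$; thus $\{f_k,g_k\}_{k=1}^d$ is a special sequence for every $d$, i.e. $\{f_k,g_k\}_{k=1}^\infty\in\mathcal{B}$ is a special branch. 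Combined with the fact that each $\{x_k,y_k,f_k,g_k\}$ is an $n_k$-exact node, this is precisely the definition of a dependent sequence. The main (and essentially only) obstacle is the self-referential nature of the weights through $\sigma$; this is resolved by the ordering of the construction, computing $n_{m+1}$ from the finished initial segment and only then invoking Proposition \ref{exactpairexist} with that specific weight. The rapid growth of $\sigma$ guarantees incidentally that $n_{m+1}>\max\supp g_m$, which is what underlies the support estimate of Lemma \ref{exactsequenceinftynorm}.
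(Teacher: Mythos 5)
Your proposal is correct and follows essentially the same route as the paper: an inductive construction in which the weight $n_{m+1}=\sigma(f_1,g_1,\ldots,f_m,g_m)$ is computed from the completed initial segment and Proposition \ref{exactpairexist} is then invoked on a sufficiently far tail of $Y$ to realize an exact node of that weight. The paper's proof is merely terser, obtaining each exact node in one stroke where you spell out the two applications of Proposition \ref{exactpairexist} and the support bookkeeping; these details are accurate and consistent with the definitions.
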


\begin{proof}
Choose $n_1\in L_1$. By Proposition \ref{exactpairexist} there
exists  an $n_1$-exact node $\{x_1,y_1,f_1,g_1\}$ in $Y$.

Suppose that we have chosen  $n_k$-exact
nodes $\{x_k,y_k,f_k,g_k\}$ for  $k=1,\ldots,m$ such that $\{f_k,g_k\}_{k=1}^m$ is a
special sequence and $\max\supp g_k < \min\supp x_{k+1}$ for
$k=1,\ldots,m-1$.

Set $n_{m+1} = \sigma(f_1,g_1,\ldots,f_m,g_m)$. Then applying
Proposition \ref{exactpairexist} once more, there exists
 an $n_{m+1}$-exact node $\{x_{m+1},y_{m+1},f_{m+1},g_{m+1}\}$ in
$Y$, such that $\max\supp g_m < \min\supp x_{m+1}$.

The inductive construction is complete and
$\{x_k,y_k,f_k,g_k\}_{k=1}^\infty$ is a dependent sequence.

\end{proof}

An easy modification of the above proof yields the following.

\begin{cor}
If $X, Y$ are block subspaces of $\X$, then a dependent sequence
$\{x_k,y_k,f_k,g_k\}_{k=1}^\infty$ can be chosen, such that
$x_k\in X$ and $y_k\in Y$ for all $k\inn$.\label{remarkforhi}
\end{cor}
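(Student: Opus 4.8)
The plan is to repeat the inductive construction of Proposition \ref{exactsequenceexist} essentially verbatim, the only change being that at each step the two exact pairs comprising an exact node are drawn from $X$ and from $Y$ separately. The crucial observation enabling this is that Proposition \ref{exactpairexist} produces, inside \emph{any} block subspace, an $n$-exact pair of \emph{prescribed} weight $n$; moreover, since a block subspace is infinite dimensional, one may apply the proposition to the tail block subspace spanned by vectors supported beyond any preassigned integer, thereby obtaining an exact pair supported arbitrarily far to the right.

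First I would handle the base case. Choose $n_1\in L_1$. Applying Proposition \ref{exactpairexist} to $X$ yields an $n_1$-exact pair $\{x_1,f_1\}$ with $x_1\in X$; applying it again to the tail of $Y$ consisting of vectors supported beyond $\max\supp f_1$ produces an $n_1$-exact pair $\{y_1,g_1\}$ with $y_1\in Y$ and $\max\supp f_1<\min\supp y_1$. Since $w(f_1)=w(g_1)=n_1$ and, by the definition of an exact pair, $\max\supp x_1\leqslant\max\supp f_1<\min\supp y_1$, the quadruple $\{x_1,y_1,f_1,g_1\}$ is an $n_1$-exact node with $x_1\in X$ and $y_1\in Y$. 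For the inductive step, I would suppose $n_k$-exact nodes $\{x_k,y_k,f_k,g_k\}$ have been constructed for $k=1,\ldots,m$ with $x_k\in X$, $y_k\in Y$, with $\{f_k,g_k\}_{k=1}^m$ a special sequence, and with $\max\supp g_k<\min\supp x_{k+1}$ for $k<m$. Set $n_{m+1}=\sigma(f_1,g_1,\ldots,f_m,g_m)$, which forces $w(f_{m+1})=n_{m+1}$ as the special-sequence condition demands. I would then apply Proposition \ref{exactpairexist} to the tail of $X$ beyond $\max\supp g_m$ to obtain an $n_{m+1}$-exact pair $\{x_{m+1},f_{m+1}\}$ with $x_{m+1}\in X$ and $\max\supp g_m<\min\supp x_{m+1}$, and afterwards to the tail of $Y$ beyond $\max\supp f_{m+1}$ to obtain an $n_{m+1}$-exact pair $\{y_{m+1},g_{m+1}\}$ with $y_{m+1}\in Y$ and $\max\supp f_{m+1}<\min\supp y_{m+1}$. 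This completes the node and the induction, and the resulting $\{x_k,y_k,f_k,g_k\}_{k=1}^\infty$ is a dependent sequence with $x_k\in X$ and $y_k\in Y$ for all $k\inn$.

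The construction involves no genuine obstacle: every support constraint is met by passing to an appropriate tail of the relevant block subspace before invoking Proposition \ref{exactpairexist}, and the weight equality $w(f_k)=w(g_k)=n_k$ inside each node is automatic since both pairs are chosen to be $n_k$-exact. The only point meriting a remark is that the proposition must indeed deliver pairs of prescribed weight supported arbitrarily far out, which is precisely what its statement guarantees once one notes that a block subspace remains a block subspace after discarding an initial segment.
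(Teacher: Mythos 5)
Your proposal is correct and is exactly the ``easy modification'' of the proof of Proposition \ref{exactsequenceexist} that the paper alludes to: alternating applications of Proposition \ref{exactpairexist} to suitable tails of $X$ and of $Y$, with the weights dictated by the coding function $\sigma$. Nothing differs in substance from the paper's intended argument.
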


\begin{prp}
Let $\{x_k,y_k,f_k,g_k\}_{k=1}^\infty$ be a dependent sequence and
set $z_k = x_k - y_k$. Then for every $m \leqslant k_1 < \cdots <
k_m$ natural numbers and $c_1,\ldots,c_m$ real numbers, the
following holds.
\begin{equation}
\|\sum_{i=1}^mc_iu_{k_i}\|_u \leqslant \|\sum_{i=1}^mc_iz_{k_i}\|
\leqslant 146\|\sum_{i=1}^mc_iu_{k_i}\|_u
\end{equation}
where $\{u_k\}_k$ denotes the unconditional basis of Pe\l czynski (see Section \ref{section1}).
\label{universalspreadingmodel}
\end{prp}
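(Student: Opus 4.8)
The plan is to prove the two inequalities separately: the left-hand domination inequality is the easier one and is powered directly by the type II$_-$ functionals built on the branch $b$, while the right-hand inequality is a standard dependent-sequence norm evaluation whose only genuinely delicate point is the interaction of an arbitrary type II$_-$ functional with the fixed branch.

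\emph{Lower bound.} Fix $m\leqslant k_1<\cdots<k_m$ and reals $c_1,\ldots,c_m$. Choose $\phi\in U^*$ with $\|\phi\|_{U^*}\leqslant 1$ norming $\sum_i c_iu_{k_i}$, and approximate it by rationals $\{\la_i\}$ with $\|\sum_i\la_iu_{k_i}^*\|_u\leqslant 1$ and $\sum_i\la_ic_i$ as close as desired to $\|\sum_ic_iu_{k_i}\|_u$. Since $\{f_k,g_k\}_{k=1}^\infty$ is a special branch and $2m\leqslant\min\supp f_{k_1}$ (guaranteed because $\min\supp x_{k_1}\geqslant 8\cdot5\cdot2^{2n_{k_1}}$ dwarfs $k_1\geqslant m$), property $5$ of the norming set gives $f:=\frac{1}{2}\sum_{i=1}^m\la_i(f_{k_i}-g_{k_i})\in W$. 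The ranges of $f_{k_i},g_{k_i}$ lie in the $k_i$-th node window, and since $\max\supp g_k<\min\supp x_{k+1}$ these windows are pairwise support-disjoint; hence $(f_{k_i}-g_{k_i})(z_{k_j})=0$ for $i\neq j$, while the node remarks give $(f_{k_i}-g_{k_i})(z_{k_i})=(f_{k_i}-g_{k_i})(x_{k_i}-y_{k_i})=2$. Thus $f(\sum_jc_jz_{k_j})=\sum_i\la_ic_i$, and letting the approximation tend to $\phi$ yields $\|\sum_ic_iz_{k_i}\|\geqslant\|\sum_ic_iu_{k_i}\|_u$.

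\emph{Upper bound.} Normalize so that $\|\sum_ic_iu_{k_i}\|_u=1$; since $\{u_k\}_k$ is suppression unconditional this forces $|c_i|\leqslant 1$ for each $i$. Write $w=\sum_ic_iz_{k_i}$. It suffices to show $|h(w)|\leqslant 146$ for every $h\in W$, and I would do this by a case analysis over the type of $h$, using that each $z_k=x_k-y_k$ is built from normalized $n_k$-exact vectors (so $\|z_k\|\leqslant 72$) and that $\{z_{k_i}\}$ inherits RIS-type behaviour from Remarks \ref{remarkexactsequence} and Lemma \ref{exactsequenceinftynorm}. If $h$ is of type $0$, I$_\al$ or I$_\be$, then the $\al$- and $\be$-average estimates of Remarks \ref{remarkexactsequence}, together with the weight-separation of Lemma \ref{exactsequenceinftynorm} and the Tsirelson domination of Corollary \ref{sccnormestimate}, bound $|h(w)|$ by a universal multiple of $\max_i|c_i|\leqslant 1$, exactly as in the finite $c_0$ estimate of Proposition \ref{finitec0}. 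If $h$ is of type II$_+$, the annihilation $(f_q+g_q)(x_k-y_k)=0$ from the node remarks kills the diagonal term, leaving only incomplete-window boundary terms, again $O(\max_i|c_i|)$. The decisive case is $h$ of type II$_-$: when its underlying special sequence is an initial segment of the fixed branch $b$, the coefficient constraint $\|\sum_q\la_qu_q^*\|_u\leqslant 1$ in property $5$ combined with the diagonal evaluation $\frac{1}{2}(f_{k_i}-g_{k_i})(z_{k_i})=1$ yields precisely $|h(w)|\leqslant\|\sum_ic_iu_{k_i}\|_u=1$; when the branch of $h$ differs from $b$, the injectivity of the coding function $\sigma$ (Remark \ref{lacunarity}) forces the two branches to share at most one weight, so $h$ interacts nontrivially with at most one or two of the $z_{k_i}$ and contributes $O(\max_i|c_i|)$. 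Summing the diagonal term (bounded by $1$) with the bounded error terms, and tracking the constants coming from $\|z_k\|\leqslant 72$, yields the bound $146$.

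\emph{Main obstacle.} The genuinely delicate step is the type II$_-$ analysis: I must show that no single functional can simultaneously exploit the $\ell_1$-like ``$-$'' structure of $b$ and accumulate large off-branch contributions. This rests on the $\sigma$-injectivity argument controlling how an arbitrary special sequence can overlap $b$, essentially the same mechanism driving the combinatorial Proposition \ref{propcombinatorial}; everything else reduces to the RIS estimates already recorded above.
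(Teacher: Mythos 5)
Your proposal follows essentially the same route as the paper: the lower bound via the type II$_-$ functional $\tfrac{1}{2}\sum_i\la_i(f_{k_i}-g_{k_i})$ built on the fixed branch (checking $2m\leqslant\min\supp f_{k_1}$ and using $(f_{k_i}-g_{k_i})(z_{k_i})=2$), and the upper bound by induction over the norming set, with the decisive type II case resolved by $\sigma$-injectivity forcing initial-segment agreement with $b$, the coefficient constraint controlling the aligned part, and the rapid growth of subsequent weights killing the divergent tail. The one imprecision worth noting is that a generic type II$_+$ functional is built on an arbitrary special sequence, so its ``diagonal'' is not automatically annihilated by $(f_q+g_q)(x_k-y_k)=0$; it requires exactly the same branch-alignment analysis you reserve for type II$_-$, which is how the paper in fact treats the two types in parallel.
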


\begin{proof}

Set $n_k = w(f_k)$ for all $k\inn$. Choose  natural numbers $m\leqslant k_1<\cdots
< k_m$ and $c_1,\ldots,c_m\subset[-1,1]$, such
that $\|\sum_{i=1}^mc_iu_{k_i}\|_u = 1$.

We first prove that $\|\sum_{i=1}^mc_iz_{k_i}\| \geqslant 1$.

Since $\min\supp z_{k_1} = \min\supp x_{k_1}\geqslant\min\supp x_m
\geqslant 40\cdot2^{2n_m} \geqslant 40\cdot2^{m} > 2m$ and
$\min\supp f_{k_1} \geqslant \min\supp x_{k_1}$, by the definition
of the norming set $W$, it follows that for every
$\la_1,\ldots,\la_m$ rational numbers such that
$\|\sum_{i=1}^m\la_iu_{k_i}^*\|_u\leqslant 1$, the functional $f =
\frac{1}{2}\sum_{i=1}^m\la_i(f_{k_i} - g_{k_i})$ is a functional
of type II$_-$ in $W$. We conclude that
\begin{equation*}
\|\sum_{i=1}^mc_iz_{k_i}\| \geqslant
\sup\big\{\sum_{i=1}^m\frac{1}{2}\la_i(f_{k_i} -
g_{k_i})(c_iz_{k_i}): \{\la_i\}_{i=1}^m\subset\mathbb{Q},\;
\|\sum_{i=1}^m\la_iu_{k_i}^*\|_u\leqslant 1\big\}
\end{equation*}

By Remark \ref{remarkexactsequence}, for $\la_1,\ldots,\la_q$ as
above, we have that $\sum_{i=1}^m\frac{1}{2}\la_i(f_{k_i} -
g_{k_i})(c_iz_{k_i}) = \sum_{i=1}^m\la_ic_i$. This yields the
following.

\begin{eqnarray*}
\|\sum_{i=1}^mc_iz_{k_i}\| &\geqslant&
\sup\big\{\sum_{i=1}^m\la_ic_i:
\{\la_i\}_{i=1}^m\subset\mathbb{Q},\;
\|\sum_{i=1}^m\la_iu_{k_i}^*\|_u\leqslant 1\big\}\\ &=&
\|\sum_{i=1}^mc_iu_{k_i}\|_u = 1
\end{eqnarray*}

To prove the inverse inequality, we will follow similar steps, as
in the proof of Proposition \ref{finitec0}. We shall inductively
prove the following.

\begin{enumerate}

\item[(i)] For any $f\in W$, we have that $|f(\sum_{k=1}^nc_kx_k)|
< 146$.

\item[(ii)] If $f$ is of type I$_\al$ or type I$_\be$ and
$w(f)\geqslant 9$, then $|f(\sum_{k=1}^nc_kx_k)| < 72 + 1/4$.

\end{enumerate}

For any functional in $W_0$ the inductive assumption holds.Assume
that it holds for any $f\in W_p$ and let $f\in W_{p+1}$. If $f$ is
a convex combination, then there is nothing to prove.

Assume that $f$ is of type I$_\be, f =
\frac{1}{2^j}\sum_{q=1}^d\be_q$, where $\{\be_q\}_{q=1}^d$ is a
very fast growing and $\mathcal{S}_j$-admissible sequence of
$\be$-averages in $W_p$.

Set $q_1 = \min\big\{q: \ran \be_q\cap\ran
z_{k_i}\neq\varnothing$\;for some\;$i\in\{1,\ldots,m\}\big\}$.

We distinguish 3 cases.\vskip3pt

\noindent {\em Case 1:} $j + 2<n_{k_1}$.

For $q>q_1$, we have that $s(\be_q) > \min\supp x_{k_1}$,
therefore, using \eqref{exactnodebestimate} we conclude that
\begin{equation}
\sum_{q>q_1}|\be_q(\sum_{i=1}^mc_iz_{k_i})| <
\frac{m}{2^{n_{k_1}}} < \frac{m}{2^m} < 1 \label{universaleq1}
\end{equation}
while the inductive assumption yields that
\begin{equation}
|\be_{q_1}(\sum_{i=1}^mc_iz_{k_i})| < 146 \label{universaleq2}
\end{equation}
Then \eqref{universaleq1} and \eqref{universaleq2} allow us to
conclude that
\begin{equation}
|f(\sum_{i=1}^mc_iz_{k_i})| < \frac{147}{2^j}\label{universaleq3}
\end{equation}\vskip3pt

\noindent {\em Case 2:} There exists $i_0 < m$, such that
$n_{k_{i_0}}\leqslant j + 2 < n_{k_{i_0} + 1}$.

Arguing as previously we get that
\begin{equation}
|f(\sum_{i>i_0}c_iz_{k_i})| < \frac{147}{2^{n_{k_{i_0}+1}}} <
\frac{147}{2^{11}} < \frac{1}{8} \label{universaleq4}
\end{equation}
and by Lemma \ref{exactsequenceinftynorm}
\begin{equation}
|f(\sum_{i<i_0}c_iz_{k_i})| < \frac{1}{2^{n_{k_1}}} < \frac{1}{8}
\label{universaleq5}
\end{equation}
Using \eqref{universaleq4}, \eqref{universaleq5} and the fact that
$|f(z_{k_{i_0}})| \leqslant 72$, we conclude that
\begin{equation}
|f(\sum_{k=1}^mc_kx_k)| < 72 + \frac{1}{4}\label{universaleq6}
\end{equation}

\noindent {\em Case 3:} $j + 2 \geqslant n_{k_m}$

By using the same arguments, we conclude that
\begin{equation}
|f(\sum_{i=1}^mc_iz_{k_i})| < 72 + \frac{1}{4}\label{universaleq7}
\end{equation}

Then \eqref{universaleq3}, \eqref{universaleq6} and
\eqref{universaleq7} yield that (i) and (ii) from the inductive
assumption are satisfied.

If $f$ is of type I$_\al$, using \eqref{exactnodeaestimate} and
the exact same arguments one can prove that (i) and (ii) from the
inductive assumption are again satisfied.

Assume now that $f$ is of type II$_-$ (or $f$ is of type II$_+$),
$f = E\big(\frac{1}{2}\sum_{j=1}^d\la_j(f_{q_j}^\prime -
g_{q_j}^\prime)\big)$ (or $f =
E\big(\frac{1}{2}\sum_{j=1}^d(f_{q_j}^\prime +
g_{q_j}^\prime)\big)$), where $E$ is an interval of the natural
numbers, $\{f_q^\prime, g_q^\prime\}_{q=1}^\infty\in\mathcal{B}$,
$q_1<\cdots<q_d$ and $2q_d \leqslant \min\supp f_{q_1}^\prime$.

We may clearly assume that $\ran(f_{q_1}^\prime \plusminus
g_{q_1}^\prime)\cap\ran(\sum_{i=1}^mc_iz_{k_i})\neq\varnothing$
and $\min E\geqslant \min\supp f_{q_1}^\prime$.

Similarly, we assume that $\ran(f_{q_d}^\prime \plusminus
g_{q_d}^\prime)\cap\ran(\sum_{i=1}^mc_iz_{k_i})\neq\varnothing$
and $\max E\leqslant \max\supp g_{q_d}^\prime$.

The inductive assumption yields  the following.
\begin{equation}
|E\big(\frac{1}{2}(f_{q_1}^\prime \plusminus
g_{q_1}^\prime)\big)(\sum_{i=1}^mc_iz_{k_i})| < 72 +
\frac{1}{4}\label{universaleq8}
\end{equation}

Set $t_j = w(f_{q_j}^\prime)$ for $j=1,\ldots,d$. By the
definition of the coding function, we have that $t_j >
2^{t_1}\min\supp x_{k_1} > \min\supp x_m > 40\cdot2^m$, for
$j=2,\ldots,d$. We conclude the following.
\begin{equation}
\sum_{j>1}\frac{72m}{2^{t_j}} \leqslant \frac{144m}{2^{t_{2}}}
<\frac{144m}{2^{40}\cdot2^m} < \frac{1}{4}\label{universaleq9}
\end{equation}

We distinguish two cases.\vskip3pt

\noindent {\em Case 1:} There exist $2\leqslant j_0\leqslant d$
and $k\inn$ such that $t_j = n_k$.\vskip3pt

In this case, the fact that $\sigma$ is one to one, yields that
$f_{q_j}^\prime \plusminus g_{q_j}^\prime = f_{q_j} \plusminus
g_{g_j}$ for $2\leqslant j < j_0$ and hence
\begin{eqnarray}
|E\big(\frac{1}{2}\sum_{j=2}^{j_0-1}\la_j(f_{q_j}^\prime -
g_{q_j}^\prime)\big)(\sum_{i=1}^mc_iz_{k_i})| &=&
|\frac{1}{2}\sum_{j=2}^{j_0-1}\la_j(f_{q_j}^\prime -
g_{q_j}^\prime)(\sum_{i=1}^mc_iz_{k_i})|\nonumber\\
&=& |\frac{1}{2}\sum_{j=2}^{j_0-1}\la_j(f_{q_j} -
g_{q_j})(\sum_{i=1}^mc_iz_{k_i})|\leqslant 1
\label{universaleq10}
\end{eqnarray}
if $f$ is of type II$_-$ and
\begin{eqnarray}
|E\big(\frac{1}{2}\sum_{j=2}^{j_0-1}(f_{q_j}^\prime +
g_{q_j}^\prime)\big)(\sum_{i=1}^mc_iz_{k_i})| &=&
|\frac{1}{2}\sum_{j=2}^{j_0-1}(f_{q_j}^\prime +
g_{q_j}^\prime)(\sum_{i=1}^mc_iz_{k_i})|\nonumber\\
&=& |\frac{1}{2}\sum_{j=2}^{j_0-1}(f_{q_j} +
g_{q_j})(\sum_{i=1}^mc_iz_{k_i})| = 0 \label{universaleq10prime}
\end{eqnarray}
if $f$ is of type II$_+$.

The inductive assumption yields that
\begin{equation}
|E\big(\frac{1}{2}(f_{q_{j_0}}^\prime -
g_{q_{j_0}}^\prime)\big)(\sum_{i=1}^mc_iz_{k_i})| < 72 +
\frac{1}{4} \label{universaleq11}
\end{equation}

Moreover, using Corollary \ref{corexactvectorroughestimate}, for
$i=1,\ldots,m$ we have that
\begin{equation*}
|E\big(\frac{1}{2}\sum_{j=j_0+1}^{d}\la_j(f_{q_j}^\prime -
g_{q_j}^\prime)\big)(z_{k_i})| < \sum_{j>j_0}\frac{72}{2^{t_j}} +
\frac{22}{2^{n_{k_i}}}
\end{equation*}
Combining this with \eqref{universaleq9}
\begin{eqnarray}
|E\big(\frac{1}{2}\sum_{j=j_0+1}^{d}\la_j(f_{q_j}^\prime -
g_{q_j}^\prime)\big)(\sum_{i=1}^mc_iz_{k_i})| &<&
\sum_{j>1}\frac{72m}{2^{t_j}} +
\sum_{i=1}^m\frac{22}{2^{n_{k_i}}}\nonumber\\
 &<& \frac{1}{4} +
\frac{22}{1000} < \frac{1}{2}\label{universaleq12}
\end{eqnarray}

Similarly,
\begin{equation}
|E\big(\frac{1}{2}\sum_{j=j_0+1}^{d}(f_{q_j}^\prime +
g_{q_j}^\prime)\big)(\sum_{i=1}^mc_iz_{k_i})| < \frac{1}{2}
\label{universaleq12prime}
\end{equation}

If $f$ is of type II$_-$ Combining \eqref{universaleq8},
\eqref{universaleq10} and \eqref{universaleq12}, we conclude that
$|f(\sum_{i=1}^mc_iz_{k_i})| < 146$, while if $f$ is of type
II$_+$ combining \eqref{universaleq8}, \eqref{universaleq10prime}
and \eqref{universaleq12prime}, we conclude that
$|f(\sum_{i=1}^mc_iz_{k_i})| < 145$\vskip3pt

\noindent {\em Case 2:} $t_j\neq n_k$, for all $j=2,\ldots,d$ and
$k\inn$.\vskip3pt

Arguing as previously, we conclude that

\begin{eqnarray}
|E\big(\frac{1}{2}\sum_{j=2}^{d}\la_j(f_{q_j}^\prime -
g_{q_j}^\prime)\big)(\sum_{i=1}^mc_iz_{k_i})|  <
\frac{1}{2}\quad\text{and}\\|E\big(\frac{1}{2}\sum_{j=2}^{d}(f_{q_j}^\prime
+ g_{q_j}^\prime)\big)(\sum_{i=1}^mc_iz_{k_i})|  <
\frac{1}{2}\nonumber\label{universaleq13}
\end{eqnarray}

Therefore, \eqref{universaleq8} and \eqref{universaleq13} yield
that $|f(x)| < 73$. The induction is complete and so is the proof.

\end{proof}

\begin{prp}
Let $Y$ be a block subspace of $\X$. Then there exist 
a seminormalized block sequence $\{z_k\}_k$ in $Y$ and  a
seminormalized block sequence $\{z_k^*\}_k$ in $\X^*$ satisfying the following.

\begin{itemize}

\item[(i)] $z^*_k(z_n) = \de_{k,n}$

\item[(ii)] For every suppression unconditional and spreading sequence
$\{w_n\}_n$, there exists $\{k_n\}_n$ a strictly increasing
sequence of natural numbers, such that $\{z_{k_n}\}_n$ generates a
spreading model which is 146-equivalent to $\{w_n\}_n$ and
$\{z_{k_n}^*\}_n$ generates a spreading model which is
146-equivalent to $\{w_n^*\}_n$

\end{itemize}
\label{spreadingmodeluniversal}
\end{prp}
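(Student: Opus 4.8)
The plan is to extract a single dependent sequence from $Y$, read off $\{z_k\}_k$ and $\{z_k^*\}_k$ from it, and transfer the two-sided estimate of Proposition \ref{universalspreadingmodel} to spreading models, both in $\X$ and in $\X^*$.

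First I would apply Proposition \ref{exactsequenceexist} to $Y$ to obtain a dependent sequence $\{x_k,y_k,f_k,g_k\}_{k=1}^\infty$ with $x_k,y_k\in Y$, and set $z_k = x_k - y_k$ and $z_k^* = \tfrac{1}{2}(f_k - g_k)$. Since $\max\supp y_k\leqslant\max\supp g_k<\min\supp x_{k+1}$, the sequence $\{z_k\}_k$ is a block sequence in $Y$, and by the exact node estimate $1\leqslant\|x_k - y_k\|\leqslant 72$ (Remarks \ref{remarkexactsequence}) it is seminormalized. As $\{f_k,g_k\}_{k=1}^\infty$ is a special branch, each $z_k^*$ is a type II$_-$ functional in $W$ (take the initial special sequence, $F=\{k\}$ and $\la_k=1$, which is admissible since $\|u_k^*\|_u\leqslant 1$ and $2\leqslant\min\supp f_k$); the ranges $\ran(f_k-g_k)$ are successive, so $\{z_k^*\}_k$ is a block sequence in $\X^*$, and $z_k^*\in W\subseteq B_{\X^*}$ together with $z_k^*(z_k)=1$ gives $1/72\leqslant\|z_k^*\|\leqslant 1$. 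For (i): the exact node relations of Remarks \ref{remarkexactsequence} give $(f_k-g_k)(x_k-y_k)=2$, hence $z_k^*(z_k)=1$, while for $k\neq n$ the inequalities $\max\supp g_k<\min\supp x_{k+1}$ force the ranges of $f_k-g_k$ and $x_n-y_n$ to be disjoint, so $z_k^*(z_n)=0$.

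Next I would record the two estimates that drive (ii), valid for all $m\leqslant k_1<\cdots<k_m$ and all scalars. The primal one is exactly Proposition \ref{universalspreadingmodel}:
\[
\|\textstyle\sum_{i=1}^m c_i u_{k_i}\|_u\leqslant\|\textstyle\sum_{i=1}^m c_i z_{k_i}\|\leqslant 146\,\|\textstyle\sum_{i=1}^m c_i u_{k_i}\|_u .
\]
For the dual, whenever $\|\sum_i d_i u_{k_i}^*\|_u\leqslant 1$ the functional $\tfrac12\sum_i d_i(f_{k_i}-g_{k_i})$ is of type II$_-$ (the admissibility $2m\leqslant\min\supp f_{k_1}$ holding for $m\leqslant k_1$, just as in Proposition \ref{universalspreadingmodel}), hence lies in $W\subseteq B_{\X^*}$; by homogeneity $\|\sum_i d_i z_{k_i}^*\|\leqslant\|\sum_i d_i u_{k_i}^*\|_u$. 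For the reverse, biorthogonality gives $\langle\sum_i d_i z_{k_i}^*,\sum_i c_i z_{k_i}\rangle=\sum_i d_ic_i$, so dividing by $\|\sum_i c_i z_{k_i}\|\leqslant 146\|\sum_i c_i u_{k_i}\|_u$ and taking the supremum over $c$ with $\|\sum_i c_i u_{k_i}\|_u\leqslant 1$ yields $\|\sum_i d_i z_{k_i}^*\|\geqslant\tfrac{1}{146}\|\sum_i d_i u_{k_i}^*\|_u$.

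The remaining step, which I expect to be the main obstacle, is to choose the subsequence so that the universal basis $\{u_k\}_k$ itself realizes $\{w_n\}_n$ as a spreading model. After the standard normalization (we work with $\{w_n\}_n$ in $1$-suppression unconditional and $1$-spreading position, so that $\|\sum a_i w_i\|=\sup_F\|\sum_{i\in F}a_i w_i\|$), I would embed $\overline{\spn\{w_n\}}$ isometrically into $C[0,1]$ and use the density of $\{x_k\}_k$ in its sphere to pick a subsequence $\{u_{k_n}\}_n$ whose tails are $(1+\e)$-equivalent to $\{w_n\}_n$ with $\e\to 0$; passing to the spreading model kills the $(1+\e)$ and produces a subsequence of $\{u_{k_n}\}_n$ generating a spreading model isometric to $\{w_n\}_n$. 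Because $\{u_k\}_k$ is suppression unconditional, the coordinate projection onto $\overline{\spn\{u_{k_n}\}}$ has norm one, whence $\|\sum_i d_i u_{k_i}^*\|_u$ equals the dual norm of $\sum_i d_i u_{k_i}^*$ computed on $\overline{\spn\{u_{k_n}\}}$; the same subsequence therefore makes $\{u_{k_n}^*\}_n$ generate a spreading model isometric to $\{w_n^*\}_n$. Finally I would pass to a further subsequence (Brunel--Sucheston) so that both $\{z_{k_n}\}_n$ and $\{z_{k_n}^*\}_n$ generate spreading models; since the index condition $m\leqslant k_1<\cdots<k_m$ is automatically satisfied in the spreading-model limit, the two displayed sandwich estimates pass to the limit and, combined with the isometric identifications above, show that the spreading model of $\{z_{k_n}\}_n$ is $146$-equivalent to $\{w_n\}_n$ and that of $\{z_{k_n}^*\}_n$ is $146$-equivalent to $\{w_n^*\}_n$. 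The delicate points are the exact (isometric) realization of $\{w_n\}$ as a spreading model of a subsequence of $\{u_k\}_k$ via diagonalization, and the simultaneous control of the biorthogonal sequence in the full dual of $U$; everything else is bookkeeping with the estimates already established.
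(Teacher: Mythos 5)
Your proposal is correct and follows essentially the same route as the paper's own proof: a dependent sequence from Proposition \ref{exactsequenceexist}, the definitions $z_k = x_k - y_k$ and $z_k^* = \tfrac{1}{2}(f_k - g_k)$, the sandwich estimate of Proposition \ref{universalspreadingmodel} together with the universality of the Pe\l czy\'nski basis for the primal spreading models, and the norming-set upper bound plus biorthogonality for the dual ones. You simply unpack a few steps (the membership of $z_k^*$ in $W$, the realization of $\{w_n\}_n$ by a subsequence of $\{u_k\}_k$) that the paper treats as known.
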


\begin{proof}
By Proposition \ref{exactsequenceexist}, there exists
 a dependent sequence $\{x_k,y_k,f_k,g_k\}_{k=1}^\infty$ in $Y$.
Set $z_k = x_k - y_k$ and $z_k^* = \frac{1}{2}(f_k - g_k)$. Then
$z^*_k(z_n) = \de_{k,n}$.

Let $\{w_n\}_n$ be a suppression unconditional and spreading sequence, then there
exists  a strictly increasing sequence of natural
numbers $\{k_n\}_n$, such that $\{u_{k_n}\}_{n\geqslant j}$ is $1+\e_j$
equivalent to $\{w_n\}_{n\geqslant j}$, where $\{\e_j\}_j$ is a null
sequence of positive reals.

Moreover, due to unconditionality, $\{u_{k_n}^*\}_{n\geqslant j}$
is $1+\e_j$ equivalent to $\{w_n^*\}_{n\geqslant j}$.

Proposition \ref{universalspreadingmodel} yields that for every
 natural numbers $m\leqslant n_1 <\cdots< n_m$ and 
real numbers $c_1,\cdots,c_m$, we have that
\begin{equation}
\frac{1}{1+\e_m}\|\sum_{i=1}^mc_iw_i\| \leqslant
\|\sum_{i=1}^mc_iz_{n_{k_i}}\| \leqslant
(1+\e_m)146\|\sum_{i=1}^mc_iw_i\|\label{usmequation1}
\end{equation}
This yields that any spreading model admitted by $\{z_{k_n}\}_n$
is 146-equivalent to $\{w_n\}_n$.

Moreover, by the definition of the norming set, for every
 natural numbers $m\leqslant n_1 <\cdots< n_m$ and 
real numbers $c_1,\cdots,c_m$, we have that
\begin{equation}
\|\sum_{i=1}^mc_iz_{n_{k_i}}^*\| \leqslant
\|\sum_{i=1}^mc_iu_{n_{k_i}}^*\|_u \leqslant
(1+\e_m)\|\sum_{i=1}^mc_iw_i^*\|\label{usmequation2}
\end{equation}
Property (i) and \eqref{usmequation1} yield the following.
\begin{equation}
\frac{1}{146(1+\e_m)}\|\sum_{i=1}^mc_iw_i^*\| \leqslant
\|\sum_{i=1}^mc_iz_{n_{k_i}}^*\|\label{usmequation3}
\end{equation}
Combining \eqref{usmequation2} and \eqref{usmequation3}, we
conclude that any spreading model admitted by $\{z_{k_n}^*\}_n$ is
146-equivalent to $\{w_n^*\}_n$.
\end{proof}

\begin{prp}
The space $\X$ is hereditarily indecomposable.
\end{prp}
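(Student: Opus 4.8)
The plan is to prove hereditary indecomposability by showing that for any two block subspaces $X,Y$ of $\X$ and any $\e>0$, one can find unit vectors $x\in X$, $y\in Y$ with $\|x-y\|<\e\|x+y\|$. Since a space is HI precisely when no two infinite-dimensional (block) subspaces form a direct sum with bounded projections, this approximate-equality condition on normalized vectors from arbitrary block subspaces is the standard criterion, and by a routine reduction it suffices to work with block subspaces. The mechanism producing such vectors is already assembled in the excerpt: the type II$_-$ functionals norm differences $x_k-y_k$ of exact-node data, and the type I$_\al$ branch functionals $f+g$ kill such differences via $(f+g)(x-y)=0$ from Remarks~\ref{remarkexactsequence}.

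First I would invoke Corollary~\ref{remarkforhi}: given block subspaces $X$ and $Y$, choose a dependent sequence $\{x_k,y_k,f_k,g_k\}_{k=1}^\infty$ with $x_k\in X$ and $y_k\in Y$ for all $k$. Set $z_k=x_k-y_k$ and $w_k=x_k+y_k$. By Remarks~\ref{remarkexactsequence} we have $(f_k+g_k)(w_k)=2$ and $(f_k+g_k)(z_k)=0$, and by Proposition~\ref{bplusellone} applied to the special branch $b=\{f_k,g_k\}_k$ (whose $b_+$ $1$-norms $\{w_k\}_k$), the sequence $\{w_k\}_k$ generates an $\ell_1$ spreading model. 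Thus for $m$ large and $m\leqslant k_1<\cdots<k_m$, a special convex combination $w=\sum_i c_i w_{k_i}$ will have $\|w\|$ of order $1$ while being a genuine average; meanwhile the corresponding difference vector $z=\sum_i c_i z_{k_i}$ should have small norm.

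The key estimate is the upper bound on $\|\sum_i c_i z_{k_i}\|$ for a special convex combination. This is exactly the content exploited in Proposition~\ref{universalspreadingmodel}, but here I would instead use the $(n,\e)$ s.c.c.\ machinery of Section~2 together with the exact-node estimates \eqref{exactnodeaestimate} and \eqref{exactnodebestimate}: when $\sum_i c_i z_{k_i}$ is a $(n,\e)$ s.c.c., the type I$_\al$, type I$_\be$, and type II functionals each contribute only $O(2^{-n})$ to $z$, precisely because the branch $\{f_k,g_k\}$ cancels on the $z_k$ (the $(f+g)(x-y)=0$ relation) while off-branch functionals are controlled by Corollaries~\ref{sccnormestimate} and \ref{corexactvectorroughestimate}. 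The outcome is that for any $\e>0$ one can select $m$, coefficients $\{c_i\}$, and indices $\{k_i\}$ so that $\|z\|<\e$ while $\|w\|\geqslant 1$ (the lower bound coming from the type II$_+$ functional $\frac12\sum_i(f_{k_i}+g_{k_i})$, which gives $\|w\|\geqslant\sum_i c_i=1$). Normalizing, $x=\sum_i c_i x_{k_i}\in X$ and $y=\sum_i c_i y_{k_i}\in Y$ satisfy $\|x-y\|<\e\|x+y\|$, which is the desired HI condition.

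The main obstacle will be organizing the upper estimate for $\|z\|$ into a clean induction over the generations $W_p$ of the norming set, in parallel with Proposition~\ref{finitec0} and Proposition~\ref{universalspreadingmodel}: one must verify that on $z=\sum_i c_i z_{k_i}$ every functional type contributes a vanishing amount, and the delicate point is the type II$_-$ and type II$_+$ functionals built from an arbitrary branch $b'=\{f_q',g_q'\}_q$ that may partially agree with our distinguished branch $\{f_k,g_k\}$. As in the proof of Proposition~\ref{universalspreadingmodel}, the coding function $\sigma$ being one-to-one forces such an intruding branch to coincide with ours on an initial segment and then diverge, so the matching part contributes $0$ (by the $(f+g)(x-y)=0$ cancellation for type II$_+$, and a controlled bound for type II$_-$), while the divergent tail is handled by \eqref{exactnodeaestimate}, \eqref{exactnodebestimate}, and Corollary~\ref{corexactvectorroughestimate}. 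Once this cancellation is in place, the s.c.c.\ weight $2^{-n}$ drives $\|z\|$ to zero, completing the proof.
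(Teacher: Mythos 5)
Your overall skeleton matches the paper's: reduce to block subspaces, take a dependent sequence $\{x_k,y_k,f_k,g_k\}_{k=1}^\infty$ with $x_k\in X$, $y_k\in Y$ via Corollary~\ref{remarkforhi}, and exploit that the sums $x_k+y_k$ behave like $\ell_1$ while the differences $x_k-y_k$ are killed by the branch. But the quantitative mechanism you propose does not close. You want $\sum_ic_iz_{k_i}$ to be a genuine $(n,\e)$ s.c.c.\ so that the Section~2 machinery forces $\|\sum_ic_iz_{k_i}\|=O(2^{-n})$, while simultaneously claiming $\|\sum_ic_iw_{k_i}\|\geqslant\sum_ic_i=1$ from the functional $\frac12\sum_i(f_{k_i}+g_{k_i})$. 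These two claims are incompatible: Corollary~\ref{sccnormestimate} is blind to signs and branches, and it applies verbatim to $\sum_ic_i(w_{k_i}/72)$, which is a $(n,\e)$ s.c.c.\ with the same index set (the sums and differences have the same minimal supports), so $\|\sum_ic_iw_{k_i}\|\leqslant 72(6\cdot 2^{-n}+12\e)$ as well. Both the sum and the difference are $O(2^{-n})$, and your argument says nothing about their ratio. The structural reason your lower bound fails is the admissibility constraint in item 4 of the definition of $W$: $\frac12\sum_{q\in F}(f_q+g_q)$ lies in $W$ only when $2(\#F)\leqslant\min\supp f_{\min F}$, and for a genuine $(n,\e)$ s.c.c.\ with $n\geqslant 2$ condition (ii) of the s.c.c.\ definition, applied to $\mathcal{S}_1$ subsets, forces the number of terms to be at least of order $\min\supp x_{k_1}/\e$, so the full type II$_+$ functional you invoke is not available. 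If instead you retreat to a plain average over $m\leqslant k_1<\cdots<k_m$ (which is what makes that functional admissible, exactly as in the proof of Proposition~\ref{bplusellone}), then Corollary~\ref{sccnormestimate} only gives the useless bound $\|\frac1m\sum_iz_{k_i}\|\leqslant 72(3+12\e)$. Either way, one half of your estimate collapses.

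The missing ingredient is the uniform bound $\|\sum_{i=1}^mz_{k_i}\|\leqslant C'$ for all $m\leqslant k_1<\cdots<k_m$, i.e.\ that a subsequence of $\{x_k-y_k\}_k$ generates a $c_0$ spreading model. This is not a consequence of the Tsirelson domination of Section~2; it is the content of Proposition~\ref{universalspreadingmodel} (equivalently, of Proposition~\ref{spreadingmodeluniversal} applied to the unit vector basis of $c_0$), whose proof is precisely the induction over the generations $W_p$ with the branch-coincidence analysis via injectivity of $\sigma$ that you sketch in your last paragraph --- but carried out for spreading-model combinations rather than s.c.c.'s, and with target constant $146$ rather than $O(2^{-n})$. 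With that in hand the paper's endgame is the one you intend: $\frac1m\|\sum_i(x_{k_{n_i}}+y_{k_{n_i}})\|\geqslant c$ from the $\ell_1$ spreading model of the sums, $\frac1m\|\sum_i(x_{k_{n_i}}-y_{k_{n_i}})\|\leqslant\frac{1}{cm}$ from the $c_0$ spreading model of the differences, and rescaling by $\frac{1}{cm}$ gives $\|x+y\|\geqslant1$ and $\|x-y\|<\e$. So either quote Proposition~\ref{universalspreadingmodel} at the crucial step or prove that uniform bound directly; as written, the proof has a genuine gap.
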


\begin{proof}
It is enough to show that for  block subspaces $X,Y$ of $\X$ and
$\e >0$, there exist $x\in X$ and $y\in Y$ such that $\|x + y\|
\geqslant 1$ and $\|x - y\| < \e$.

By Corollary \ref{remarkforhi}, there exists
 a dependent sequence $\{x_k,y_k,f_k,g_k\}_{k=1}^\infty$, with
$x_k\in X$ and $y_k\in Y$ for all $k\inn$.

By Remark \ref{remarkexactsequence} and Proposition
\ref{universalspreadingmodel}, there exists  a strictly
increasing sequence of natural numbers $\{k_n\}_n$, such that $\{x_{k_n} +
y_{k_n}\}_n$ generates an $\ell_1$ spreading model and $\{x_{k_n}
- y_{k_n}\}_n$ generates a $c_0$ spreading model.

Fix $c>0$ such that for any 
natural numbers $m\leqslant n_1 < \cdots < n_m$ the following holds.
\begin{eqnarray*}
\frac{1}{m}\|\sum_{i=1}^m(x_{k_{n_i}} - y_{k_{n_i}})\| \leqslant
\frac{1}{c\cdot m}\\
\frac{1}{m}\|\sum_{i=1}^m(x_{k_{n_i}} + y_{k_{n_i}})\| \geqslant c
\end{eqnarray*}

Fix  natural numbers $m \leqslant n_1 < \cdots < n_m$ such that
$\frac{1}{c^2m} < \e$ and set $x = \frac{1}{c\cdot
m}\sum_{i=1}^mx_{k_{n_i}}$ and $y = \frac{1}{c\cdot
m}\sum_{i=1}^my_{k_{n_i}}$.

Then $\|x + y\| \geqslant 1$ and $\|x - y\| \leqslant
\frac{1}{c^2m} < \e$.

\end{proof}

\section{Bounded operators on $\X$}

This section is devoted to operators on $\X$. We prove that in
every block subspace of $\X$ there exist equivalent intertwined
block sequences $\{x_k\}_k, \{y_k\}_k$ and an onto isomorphism
$T:\X\rightarrow\X$, such that $Tx_k = y_k$. This yields that $\X$
does not contain a block subspace that is tight by range and
hence, $\X$ is saturated with sequentially minimal subspaces (see
\cite{FR}). We then proceed to identify block sequences witnessing
this fact and we also prove that the whole space $\X$ is sequencially minimal. We moreover construct a strictly singular operator
$S:\X\rightarrow\X$ which is not polynomially compact. All the
above properties of $\X$ are based on the way type II functionals
are constructed in the norming set $W$ and the rich spreading
model structure of $\X$.

The following result is proven in a similar manner as Theorem 5.8
from \cite{AM} and therefore its proof is omitted.

\begin{prp} Let $Y$ be an infinite dimensional closed subspace of
$\X$ and $T:Y\rightarrow \X$ be a bounded linear operator. Then
there exists $\la\in\mathbb{R}$, such that $T - \la I_{_{Y,\X}}: Y
\rightarrow \X$ is strictly singular.\label{scalarplusss}
\end{prp}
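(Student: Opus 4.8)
The plan is to follow the standard scheme for operators on an HI space, adapted to the RIS/exact‑pair machinery developed above and carried out exactly as in Theorem 5.6 of \cite{AM}. Recall that a bounded operator $R$ is strictly singular precisely when it is not bounded below on any infinite dimensional subspace, i.e.\ $\inf\{\|Rz\|:z\in Z,\ \|z\|=1\}=0$ for every infinite dimensional $Z\subseteq Y$; and since every infinite dimensional subspace contains, for each $\e>0$, a normalized sequence $\e$-close to a block sequence, it suffices to treat block subspaces. Thus the goal is to produce a single real scalar $\la$ for which $T-\la I_{_{Y,\X}}$ fails to be bounded below on every block subspace of $Y$.

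The central lemma is that for every normalized $\al$-RIS $\{x_k\}_k$ in $Y$ one has $\lim_k\dist(Tx_k,\mathbb{R}x_k)=0$. I would prove this by contradiction. Assuming $\dist(Tx_k,\mathbb{R}x_k)\geqslant\de>0$, after passing to a subsequence and a small perturbation (legitimate since the basis is shrinking, Corollary \ref{shrinking}) I may take $\{Tx_k\}_k$ to be a seminormalized block sequence and choose functionals $x_k^*\in W$ with $x_k^*(x_k)=0$, $x_k^*(Tx_k)>\de/2$ and $\ran x_k^*\subseteq\ran x_k\cup\ran Tx_k$. Running the exact‑pair and dependent‑sequence construction of Propositions \ref{exactpairexist} and \ref{exactsequenceexist} on the $x_k$, while using these $x_k^*$ to feed type II special functionals, I would form a $(C,\theta,n)$ exact vector $x=2^n\sum_kc_kx_k$ whose norm is controlled by Remark \ref{exactvectornorm}, together with a type II functional $f$ that evaluates $Tx$ to size proportional to $\de$ times the number of active terms. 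Letting $n\to\infty$, the inequality $\|Tx\|\leqslant\|T\|\,\|x\|$ is violated. This step — converting the transversality of $Tx_k$ from $\mathbb{R}x_k$ into a genuine norm contradiction through the special functionals — is the main obstacle, and is precisely where the combinatorial structure of $W$ is exploited, just as in the proof that $\X$ is hereditarily indecomposable above.

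Granting the lemma, write $Tx_k=\la_kx_k+w_k$ with $\la_k\in\mathbb{R}$ and $\|w_k\|\to0$. I would then show that $\{\la_k\}_k$ converges and that its limit $\la$ is independent of the chosen RIS: given two normalized $\al$-RIS in $Y$ with scalar limits $\la$ and $\la'$, their interleaving is again a normalized $\al$-RIS, and since $\X$ is hereditarily indecomposable one can select almost parallel vectors $u,v$ from the two block subspaces (exactly as in the proof that $\X$ is HI). Applying $T$ and using $Tu\approx\la u$ and $Tv\approx\la' v$ forces $\la=\la'$. Hence a single real scalar $\la$ is attached to all RIS in $Y$.

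Finally, to see that $T-\la I_{_{Y,\X}}$ is strictly singular, let $Z\subseteq Y$ be an arbitrary block subspace. By Corollary \ref{corexactvetorazero} together with the exact‑vector construction underlying Proposition \ref{exactpairexist}, $Z$ contains a normalized $\al$-RIS $\{z_k\}_k$; by the central lemma and the previous paragraph $\|(T-\la I_{_{Y,\X}})z_k\|=\|Tz_k-\la z_k\|\to0$, so $T-\la I_{_{Y,\X}}$ is not bounded below on $Z$. As $Z$ was an arbitrary block subspace of $Y$, the operator $T-\la I_{_{Y,\X}}$ is strictly singular, completing the proof.
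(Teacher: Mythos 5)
Your overall architecture is the right one and, as far as architecture goes, it is the same as the paper's: the paper omits the proof entirely, saying only that it ``is proven in a similar manner as Theorem 5.6 from \cite{AM}'', and that argument does indeed run through the central lemma that $\lim_k\dist(Tx_k,\mathbb{R}x_k)=0$ for suitable RIS, followed by uniqueness of the scalar and the observation that every block subspace of $Y$ contains such a sequence. The reduction to block subspaces, the passage from the lemma to convergence of the $\la_k$, and the final deduction of strict singularity are all fine (the claim that an interleaving of two $\al$-RIS is again an $\al$-RIS needs a passage to subsequences to keep the weight conditions compatible, but that is routine).

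The genuine gap is in the one step that carries the weight of the proof, and you have flagged it yourself as ``the main obstacle'' without resolving it. The mechanism you describe --- choosing $x_k^*\in W$ with $x_k^*(x_k)=0$ and $x_k^*(Tx_k)>\de/2$ and then ``using these $x_k^*$ to feed type II special functionals'' --- is not available in this norming set. First, $W$ is not the whole dual ball, so a Hahn--Banach functional vanishing on $x_k$ and norming $Tx_k$ need not lie in $W$; at best one gets elements of $W$ with $|x_k^*(x_k)|$ small after a perturbation argument. More importantly, the type II functionals of $W$ are generated exclusively by special sequences $\{f_q,g_q\}$ of type I$_\al$ functionals whose weights are dictated by the coding function $\sigma$; there is no operation in the definition of $W$ into which an arbitrary norming functional can be inserted. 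Consequently the contradiction has to be manufactured inside this rigid structure: one must show that $(Tx_k)$ (after perturbation) is a block sequence inheriting enough of the RIS/index structure that exact pairs can be built simultaneously for the $x_k$'s and their images, so that the type I$_\al$ members of a dependent sequence themselves nearly norm the images while nearly annihilating the sources; only then do the estimates of the form of Corollary \ref{corexactvectorroughestimate} together with the s.c.c.\ bounds force $\|x\|$ small against a large evaluation on $Tx$. That construction is precisely the content of Theorem 5.6 of \cite{AM}, and it is the part your proposal does not supply.
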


The following result follows from Proposition 3.1 from \cite{ADT},
see also \cite{MP}.

\begin{prp}
Let $\{x_m^*\}_m$ be a block sequence in $\X^*$ generating a $c_0$
spreading model and $\{x_k\}_k$ be a block sequence in $\X$
generating a spreading model which is not equivalent to $\ell_1$.
Then there exists a strictly increasing sequence of natural
numbers $\{t_j\}_j$, such that the following is satisfied. For
every strictly increasing sequence of natural numbers $\{m_k\}_k$
with $m_k\geqslant t_k$ for all $k\inn$, the map $T:\X\rightarrow
\X$ with $Tx = \sum_{k=1}^\infty x_{m_k}^*(x)x_k$ is bounded and
non compact. \label{operator}
\end{prp}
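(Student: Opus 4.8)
The plan is to build $T$ as an infinite linear combination of rank-one operators $x \mapsto x_{m_k}^*(x)\,x_k$, and to show boundedness by exploiting the duality between the $c_0$ spreading model of $\{x_m^*\}_m$ and the non-$\ell_1$ spreading model of $\{x_k\}_k$. First I would fix the relevant spreading model constants: since $\{x_m^*\}_m$ generates a $c_0$ spreading model, there is a constant $C_1$ so that for $j \leqslant p_1 < \cdots < p_\ell$ one has $\|\sum_{i} a_i x_{p_i}^*\| \leqslant C_1 \max_i |a_i|$; since $\{x_k\}_k$ generates a spreading model not equivalent to $\ell_1$, the lower $\ell_1$-constant of the spreading model is $0$, meaning for every $\eta>0$ and every $n$ there are arbitrarily late blocks with $\|\sum_{i=1}^n z_{k_i}\| < \eta n$. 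The sequence $\{t_j\}_j$ is chosen to encode, for each coordinate, how late the supports must begin so that successive pieces essentially do not interact.

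The core estimate is to bound $\|Tx\|$ for $x$ of norm one. I would write $Tx = \sum_k x_{m_k}^*(x) x_k$ and estimate the norm by testing against an arbitrary $f \in W$. The key observation is that for a fixed $f$, only boundedly many of the coefficients $x_{m_k}^*(x)$ can be large: because $\{x_m^*\}_m$ is a block sequence generating a $c_0$ spreading model, the map sending $x$ to its sequence of coordinates $(x_{m_k}^*(x))_k$ lands (after the choice of $\{t_j\}$ spacing the $m_k$) in a space where the $\ell_\infty$-norm of the coordinate sequence is controlled by $\|x\|$. Dually, since $\{x_m^*\}_m$ has a $c_0$ spreading model, the pre-adjoint sequence governs an $\ell_1$-type averaging on the $x$ side, so that $\sum_k |x_{m_k}^*(x)|$ behaves like an $\ell_1$ norm that is dominated by $\|x\|$ up to the spreading model constant. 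Combining the boundedness of the coefficient sequence with the fact that $\{x_k\}_k$ does not generate $\ell_1$ — so that finite sums $\|\sum_k a_k x_k\|$ with bounded coefficients stay bounded rather than growing linearly — yields $\|Tx\| \leqslant C\|x\|$ for a constant $C$ depending only on the two spreading model constants. The spacing condition $m_k \geqslant t_k$ is precisely what guarantees that the coordinate functionals act almost independently, so the rough spreading model estimates apply uniformly.

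For non-compactness I would argue by contradiction or directly exhibit a non-convergent image of a bounded sequence. The natural candidate is to apply $T$ to suitably normalized vectors dual to the $x_{m_k}^*$: since $\{x_m^*\}_m$ is seminormalized and biorthogonal-like behavior is available through the block structure, one can select a normalized sequence $\{w_k\}$ with $x_{m_k}^*(w_k)$ bounded below and $x_{m_j}^*(w_k) = 0$ for $j \neq k$ (using that the $x_m^*$ are successive blocks), so that $Tw_k$ is essentially a nonzero multiple of $x_k$. Since $\{x_k\}_k$ is seminormalized and weakly null (being a block sequence in the reflexive space $\X$), the images $Tw_k$ form a seminormalized weakly null sequence, which has no norm-convergent subsequence; hence $T$ is not compact.

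The main obstacle will be the boundedness estimate, specifically controlling the interaction of the terms $x_{m_k}^*(x) x_k$ inside an arbitrary functional $f \in W$. The delicate point is that a single type I or type II functional can straddle many of the $x_k$ simultaneously, and one must show that the weights $x_{m_k}^*(x)$ decay or remain summable enough that no functional can accumulate a large value; this is exactly where the $c_0$ spreading model of $\{x_m^*\}_m$ — forcing the coefficient vector to have small $\ell_\infty$-mass beyond a bounded number of large entries — must be played against the non-$\ell_1$ growth of $\{x_k\}_k$. This is the content of Proposition 3.1 of \cite{ADT} in the present setting, and the spacing $\{t_j\}$ is the technical device that makes the two spreading models compatible.
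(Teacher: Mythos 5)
Your overall architecture is the right one and matches the paper's: the paper does not prove this proposition directly but cites Proposition 3.1 of \cite{ADT}, and the intended mechanism is displayed in full in the proof of Proposition \ref{operatorpq}, its quantitative $\ell_p$ analogue. Your non-compactness argument (apply $T$ to vectors biorthogonal to $\{x^*_{m_k}\}_k$, so that the images are essentially the seminormalized weakly null vectors $x_k$, which admit no norm-convergent subsequence) is exactly the paper's and is correct.

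The boundedness sketch, however, contains a genuine error at its centre. You claim that the $c_0$ spreading model of $\{x^*_m\}_m$ forces $\sum_k |x^*_{m_k}(x)|$ to be dominated by $\|x\|$. This is false: the spreading model estimate only yields $\sum_{k\in F}|x^*_{m_k}(x)| \leqslant C\|x\|$ for \emph{admissible} finite sets $F$ (those with $\#F \leqslant \min\{m_k : k \in F\}$), and summing over successive dyadic blocks of indices shows that the full series need not be uniformly bounded; what one actually gets is only a weak-$\ell_1$ bound on the coefficient sequence. Moreover, if your $\ell_1$ bound were true, then $\|Tx\| \leqslant \sup_k\|x_k\| \cdot \sum_k|x^*_{m_k}(x)|$ would bound $T$ for an arbitrary bounded $\{x_k\}_k$, making the non-$\ell_1$ hypothesis superfluous --- a sign that this hypothesis must enter the argument elsewhere. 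The actual proof (see Proposition \ref{operatorpq}) estimates $|x^*(Tx)|$ for $\|x^*\|=\|x\|=1$ by partitioning the indices according to the size of $|x^*(x_k)|$ on the \emph{target} side, namely $B_j = \{k : 2^{-(j+1)} < |x^*(x_k)| \leqslant 2^{-j}\}$. The non-$\ell_1$ spreading model of $\{x_k\}_k$, i.e. $\delta_n := \frac{1}{n}\|\sum_{i=1}^n e_i\| \to 0$ where $\{e_i\}_i$ is that spreading model, is what bounds $\#B_j$ by a quantity $t_j$ determined by the decay rate of $\delta_n$; the condition $m_k\geqslant t_k$ then makes the set $\{m_k : k\in B_j,\ k\geqslant j\}$ admissible, so the $c_0$ upper estimate gives $\|\sum_{k\in B_j,\ k\geqslant j} x^*(x_k)x^*_{m_k}\| \leqslant C 2^{-j}$, which is summable in $j$. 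Your description of $t_j$ as ensuring that ``successive pieces do not interact'' misses this: $t_j$ must be defined from the degeneration rate of the non-$\ell_1$ spreading model, and it is precisely what converts the two spreading model hypotheses into a convergent series. Without this decomposition the proposal does not close.
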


The proof of the following result uses an argument, which first
appeared in \cite{FS}, namely the following. If $\{x_k\}_k$,
$\{y_k\}_k$ are basic sequences in a space $X$, such that the maps
$x_k\rightarrow x_k - y_k$ and $y_k\rightarrow x_k - y_k$ extend
to bounded linear operators, then $\{x_k\}_k$ is equivalent to
$\{y_k\}_k$.

\begin{prp}
Let $\{x_k,y_k,f_k,g_k\}_{k=1}^\infty$ be a dependent sequence.
Then there exists  a strictly increasing sequence of
natural numbers $\{k_n\}_n$, such that $\{x_{k_n}\}_n$ is equivalent to
$\{y_{k_n}\}_n$. More precisely, there exists 
an onto isomorphism $T:\X\rightarrow \X$, with $Tx_{k_n} = y_{k_n}$ for all
$n\inn$.\label{exactsequenceevenodd}
\end{prp}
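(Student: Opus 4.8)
The plan is to use the two strictly singular operators provided by the structure of the dependent sequence, combined with the Ferenczi--Schlumprecht argument quoted immediately before the statement. First I would recall the Proposition stated in the introduction: from a dependent sequence $\{x_k,y_k,f_k,g_k\}_{k=1}^\infty$ with $z_k = x_k - y_k$, the sequence $\{f_k + g_k\}_k$ generates a $c_0$ spreading model while $\{x_k - y_k\}_k$ does not generate an $\ell_1$ spreading model (indeed, by Proposition \ref{universalspreadingmodel}, $\{z_k\}_k$ is equivalent to a subsequence of Pe\l czy\'nski's universal basis, so it admits many non-$\ell_1$ spreading models, in particular a $c_0$ one). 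The dual vectors $z_k^* = \frac{1}{2}(f_k - g_k)$ from Proposition \ref{spreadingmodeluniversal} satisfy $z_k^*(z_n) = \de_{k,n}$, and $\{f_k+g_k\}_k$ furnishes a block sequence in $\X^*$ generating a $c_0$ spreading model.

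The key construction is to feed these into Proposition \ref{operator}. Observe that since $\{x_k,y_k,f_k,g_k\}$ is an exact node, $(f_k+g_k)(x_k) = 1$, $(f_k+g_k)(y_k) = 1$, and $(f_k+g_k)(z_n) = 0$ for $n \neq k$; thus the functionals $w_k^* = \frac{1}{2}(f_k+g_k)$ act diagonally on $\{x_k\}_k$ and $\{y_k\}_k$. Applying Proposition \ref{operator} with the $c_0$-spreading-model block sequence $\{w_k^*\}_k$ in $\X^*$ and with the non-$\ell_1$ block sequence $\{z_k\}_k = \{x_k - y_k\}_k$ in $\X$, I obtain (after passing to a subsequence indexed by $\{k_n\}_n$, to accommodate the threshold $\{t_j\}_j$) a bounded operator $S:\X\rightarrow\X$ with $Sx = \sum_n w_{k_n}^*(x)\, z_{k_n}$. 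The diagonal action gives $Sx_{k_n} = \frac{1}{2}(f_{k_n}+g_{k_n})(x_{k_n})\,z_{k_n} = \frac{1}{2}(x_{k_n}-y_{k_n})$ and likewise $Sy_{k_n} = \frac{1}{2}(x_{k_n}-y_{k_n})$; after rescaling I arrange $Sx_{k_n} = Sy_{k_n} = x_{k_n}-y_{k_n}$. This is precisely the operator mentioned after the Proposition in the introduction, sending both $x_{k_n}$ and $y_{k_n}$ to their difference.

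I would then invoke the Ferenczi--Schlumprecht principle recalled just above the statement: since $S(x_{k_n}) = x_{k_n}-y_{k_n}$ and $S(y_{k_n}) = x_{k_n}-y_{k_n}$ extend to the single bounded operator $S$, the sequences $\{x_{k_n}\}_n$ and $\{y_{k_n}\}_n$ are equivalent. Finally, $T = I - S$ is the desired isomorphism: it satisfies $Tx_{k_n} = x_{k_n} - (x_{k_n}-y_{k_n}) = y_{k_n}$, and because $S$ is strictly singular (it factors through the $c_0$ spreading model and is constructed exactly as in Proposition \ref{operator}, which, combined with Proposition \ref{scalarplusss}, forces every operator to be $\la I$ plus strictly singular with $\la=0$ here), $T = I - S$ is an isomorphism onto $\X$ by the standard perturbation argument.

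The main obstacle I expect is verifying the boundedness and, more delicately, the \emph{isomorphism} (invertibility) of $T = I - S$. Boundedness of $S$ is guaranteed by Proposition \ref{operator}, but strict singularity of $S$ is what makes $I-S$ Fredholm of index zero; I would need to confirm that $S$ is genuinely strictly singular — this follows because its range sits inside the closed span of $\{z_{k_n}\}_n$ on which $S$ factors through a $c_0$ pattern, so no subspace is fixed up to isomorphism — and then argue that $I - S$ is injective with closed range and dense range, hence onto. The diagonal identities $(f_k+g_k)(z_n)=\de_{k,n}\cdot 0$ together with the orthogonality relations from Remark \ref{remarkexactsequence} must be checked carefully so that $S$ really acts as claimed on the whole basis, not merely on the $x_{k_n}, y_{k_n}$; handling the remaining basis vectors and confirming that $S$ has the correct diagonal form is the technical heart of the argument.
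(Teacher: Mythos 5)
Your construction is essentially the paper's: you apply Proposition \ref{operator} to the block sequence $\{f_{k}+g_{k}\}_{k}$ in $\X^*$ (which generates a $c_0$ spreading model by the definition of the norming set) and to $\{x_k-y_k\}_k$ (which admits a $c_0$, hence non-$\ell_1$, spreading model by Proposition \ref{universalspreadingmodel}), obtaining a bounded $S$ with $Sx_{k_n}=Sy_{k_n}=x_{k_n}-y_{k_n}$, and you set $T=I-S$. This is exactly the operator the paper uses. (Two small remarks: no rescaling is needed if you take $f_k+g_k$ rather than $\frac12(f_k+g_k)$, since $(f_k+g_k)(x_k)=(f_k+g_k)(y_k)=1$ by Remark \ref{remarkexactsequence}; and your worry about "the remaining basis vectors" is vacuous, because $S$ is defined on all of $\X$ by the formula $Sx=\sum_n (f_{k_n}+g_{k_n})(x)(x_{k_n}-y_{k_n})$, so there is nothing further to check there.)

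The genuine gap is the invertibility of $T=I-S$. You assert that $T$ is an isomorphism onto $\X$ "by the standard perturbation argument" because $S$ is strictly singular. This does not work: the Neumann-series argument requires $\|S\|<1$, which you have no control over, and strict singularity of $S$ only makes $I-S$ Fredholm of index zero, which is compatible with a nontrivial kernel. You flag injectivity as something still to be argued but do not supply the argument, and the Fredholm route does not supply it either. The correct (and very short) fix, which is what the paper does, bypasses strict singularity entirely: since $S(x_{k_n}-y_{k_n})=Sx_{k_n}-Sy_{k_n}=0$ for all $n$, the operator $S$ vanishes on $[\{x_{k_n}-y_{k_n}\}_n]$, and $S[\X]\subset[\{x_{k_n}-y_{k_n}\}_n]$ by construction; hence $S^2=0$, so $(I-S)(I+S)=(I+S)(I-S)=I-S^2=I$ and $I+S$ is an explicit two-sided inverse of $T$. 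With that observation your argument closes, and the equivalence of $\{x_{k_n}\}_n$ and $\{y_{k_n}\}_n$ follows as you say, since $T$ and $T^{-1}=I+S$ carry one sequence onto the other.
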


\begin{proof}
First observe the following, for any $k\inn$, we have that
\begin{equation*}
2\geqslant \|f_k + g_k\| \geqslant
(f_k+g_k)\big(\frac{x_k+y_k}{\|x_k + y_k\|}\big)\geqslant
\frac{2}{72}
\end{equation*}
Hence $\{f_k + g_k\}_k$ is seminormalized and by the definition of
the norming set $W$, any spreading model admitted by it, is $c_0$.

By Proposition \ref{universalspreadingmodel}, $\{x_k-y_k\}_k$
admits a $c_0$ spreading model. Proposition \ref{operator}, yields
that there exists  a strictly increasing sequence of
natural numbers $\{k_n\}_n$, such that the operator $S:\X\rightarrow\X$ with
\begin{equation*}
Sx = \sum_{n=1}^\infty(f_{k_n} + g_{k_n})(x)(x_{k_n} - y_{k_n})
\end{equation*}
is bounded.

Then, for every $n\inn$ we have that $Sx_{k_n} = x_{k_n} -
y_{k_n}$. Setting $T = I - S$, we evidently have that $Tx_{k_n} =
y_{k_n}$, hence $\{x_k\}_k$ is dominated by $\{y_k\}_k$.

Similarly, for every $n\inn$ we have that $Sy_{k_n} = x_{k_n} -
y_{k_n}$. Setting $Q = I + S$, we evidently have that $Qy_{k_n} =
x_{k_n}$. Therefore $\{y_k\}_k$ is dominated by $\{x_k\}_k$, which
yields that they are actually equivalent.

We shall moreover prove that $T$ is invertible, in fact $Q =
T^{-1}$. Notice that $TQ = QT = I - S^2$. It remains to be shown
that $S^2 = 0$.

Since $Sx_{k_n} = x_{k_n} - y_{k_n} = Sy_{k_n}$ for all $n\inn$,
we evidently have that $S(x_{k_n} - y_{k_n}) = 0$ for all $n\inn$.
This yields that $[\{x_{k_n} - y_{k_n}\}_n] \subset \ker S$.
Evidently, we have that $S[\X]\subset [\{x_{k_n} - y_{k_n}\}_n]$,
therefore $S[\X]\subset \ker S$. We conclude that $S^2 = 0$ and
this completes the proof.
\end{proof}

Before the statement of the next result, we remind the notion of
even-odd sequences and intertwined block sequences. A Schauder
basic sequence $\{x_k\}_k$ is called even-odd, if $\{x_{2k}\}_k$
is equivalent to $\{x_{2k-1}\}_k$  (see \cite{G}).

Two block sequences $\{x_k\}_k$, $\{y_k\}_k$ are called
intertwined, if $x_k < y_k < x_{k+1}$ for all $k\inn$.

Evidently, two intertwined block sequences $\{x_k\}_k$,
$\{y_k\}_k$ are equivalent, if and only if the sequence
$\{z_k\}_k$ with $z_{2k-1} = x_k$ and $z_{2k} = y_k$ for all
$k\inn$, is an even-odd sequence.

\begin{prp}
Every block subspace of $\X$ contains an even-odd block sequence.
More precisely, in every block subspace $Y$ of $\X$, there exists
a block sequence $\{z_k\}_k$ and  an onto
isomorphism $T:\X\rightarrow \X$, such that $Tz_{2k-1} = z_{2k}$, for all
$k\inn$.\label{evenodd}
\end{prp}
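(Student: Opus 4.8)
The plan is to obtain $\{z_k\}_k$ by interleaving the two intertwined sequences of a dependent sequence, and to take $T$ to be precisely the onto isomorphism produced in Proposition \ref{exactsequenceevenodd}. First I would apply Proposition \ref{exactsequenceexist} to the block subspace $Y$, obtaining a dependent sequence $\{x_k,y_k,f_k,g_k\}_{k=1}^\infty$ with $\{x_k\}_k$ and $\{y_k\}_k$ block sequences in $Y$. The support conditions built into Definition \ref{definitionexactnode} give, for every $k$, that $\max\supp x_k\leqslant\max\supp f_k<\min\supp y_k$ and $\max\supp y_k\leqslant\max\supp g_k<\min\supp x_{k+1}$, so that $x_k<y_k<x_{k+1}$; that is, $\{x_k\}_k$ and $\{y_k\}_k$ are intertwined.

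Next I would invoke Proposition \ref{exactsequenceevenodd} to produce a strictly increasing sequence $\{k_n\}_n$ together with an onto isomorphism $T:\X\rightarrow\X$ satisfying $Tx_{k_n}=y_{k_n}$ for all $n\inn$. Passing to this subsequence preserves the intertwining: since $k_{n+1}\geqslant k_n+1$, we have $x_{k_n}<y_{k_n}<x_{k_n+1}\leqslant x_{k_{n+1}}$, so $\{x_{k_n}\}_n$ and $\{y_{k_n}\}_n$ are again intertwined. Defining $z_{2n-1}=x_{k_n}$ and $z_{2n}=y_{k_n}$ therefore gives a genuine block sequence $\{z_k\}_k$ in $Y$, and by construction $Tz_{2n-1}=Tx_{k_n}=y_{k_n}=z_{2n}$ for every $n\inn$.

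Finally, Proposition \ref{exactsequenceevenodd} asserts that $\{x_{k_n}\}_n$ is equivalent to $\{y_{k_n}\}_n$, so by the characterisation of even-odd sequences recalled just before the statement, the interleaved sequence $\{z_k\}_k$ is even-odd. The only point demanding genuine care---rather than a real obstacle---is the bookkeeping in the first two steps: verifying that the exact-node and special-branch support constraints truly force the strict intertwining $x_k<y_k<x_{k+1}$, and that this is inherited by the subsequence indexed by $\{k_n\}_n$, so that the interleaving $z_{2n-1}=x_{k_n}$, $z_{2n}=y_{k_n}$ yields an honest block sequence with non-overlapping supports rather than merely a Schauder basic sequence.
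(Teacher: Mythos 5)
Your proposal is correct and follows essentially the same route as the paper: apply Proposition \ref{exactsequenceexist} to get a dependent sequence in $Y$, then Proposition \ref{exactsequenceevenodd} to extract the subsequence $\{k_n\}_n$ and the onto isomorphism $T$ with $Tx_{k_n}=y_{k_n}$, and interleave to define $\{z_k\}_k$. The extra bookkeeping you flag about the intertwining being preserved under passage to the subsequence is a point the paper leaves implicit, but it is exactly as routine as you describe.
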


\begin{proof}
By Proposition \ref{exactsequenceexist}, there exists
 a dependent sequence $\{x_k,y_k,f_k,g_k\}_{k=1}^\infty$ in $Y$ and
by Proposition \ref{exactsequenceevenodd} there exist 
a strictly increasing sequence of natural numbers $\{k_n\}_n$ and
 an onto isomorphism $T:\X\rightarrow\X$, such that $Tx_{n_k} =
y_{n_k}$ for all $k\inn$. Setting $z_{2k-1} = x_{n_k}$ and $z_{2k}
= y_{n_k}$ for all $k\inn$, we have that $\{z_k\}_k$ is the
desired even-odd block sequence and $T$ the desired operator.
\end{proof}

\begin{cor}
The space $\X$ does not contain a block subspace which is tight by
range.\label{tightbyrange}
\end{cor}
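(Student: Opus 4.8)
The plan is to derive the corollary directly from Proposition~\ref{evenodd}, after translating the conclusion into the language of \cite{FR}. Recall that a normalized block sequence $\{u_k\}_k$ is tight by range if for every infinite $A\subset\mathbb{N}$ the space $[u_k]_k$ does not isomorphically embed into $[e_n:n\notin\bigcup_{k\in A}\ran u_k]$, and that a block subspace is tight by range precisely when all of its block sequences are tight by range. Consequently, to prove that $\X$ contains no tight by range block subspace, it suffices to exhibit, inside an arbitrary block subspace $Z$, a single block sequence that fails to be tight by range.

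First I would fix a block subspace $Z$ of $\X$ and apply Proposition~\ref{evenodd} to obtain a block sequence $\{z_k\}_k$ in $Z$ together with an onto isomorphism $T:\X\to\X$ satisfying $Tz_{2k-1}=z_{2k}$ for all $k\inn$. Separating the odd and even terms, I set $x_k=z_{2k-1}$ and $y_k=z_{2k}$. Since $\{z_k\}_k$ is a block sequence, $x_k<y_k<x_{k+1}$ for every $k$, so $\{x_k\}_k$ and $\{y_k\}_k$ are intertwined; and because $T$ is an onto isomorphism with $Tx_k=y_k$, its restriction to $[x_k]_k$ is an isomorphism onto $[y_k]_k$, so $\{x_k\}_k$ and $\{y_k\}_k$ are equivalent.

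The key step is then a support computation. The intertwining $x_k<y_k<x_{k+1}$ forces $\ran y_k\cap\ran x_j=\varnothing$ for all $j,k$, so that $[y_k]_k\subset[e_n:n\notin\bigcup_j\ran x_j]$. Feeding in the equivalence $[x_k]_k\cong[y_k]_k$ produced by $T$, the space $[x_k]_k$ embeds into $[e_n:n\notin\bigcup_j\ran x_j]$; taking $A=\mathbb{N}$ in the definition above, this says exactly that the block sequence $\{x_k\}_k$ is not tight by range. As $\{x_k\}_k$ is a block sequence of the arbitrary block subspace $Z$, the subspace $Z$ is not tight by range, and the corollary follows.

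I do not anticipate any analytic obstacle: the substantial content is already packaged in Proposition~\ref{evenodd}, and what remains is bookkeeping. The only point requiring care is the faithful use of the definition from \cite{FR}, namely verifying that equivalent intertwined block sequences genuinely witness the failure of tightness by range for the interval sequence given by the ranges $\ran x_k$, and that producing one such sequence inside $Z$ is enough to disqualify $Z$ as a tight by range block subspace.
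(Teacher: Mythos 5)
Your proof is correct and follows exactly the route the paper intends: the paper states Corollary~\ref{tightbyrange} without proof as an immediate consequence of Proposition~\ref{evenodd}, and your argument supplies precisely the standard bookkeeping (equivalent intertwined block sequences $\{z_{2k-1}\}_k$, $\{z_{2k}\}_k$ witness that $[z_{2k-1}]_k$ embeds into the span of the basis vectors outside $\bigcup_k\ran z_{2k-1}$, contradicting tightness by range). No issues.
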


Theorem 1.4 from \cite{FR} yields that $\X$ is saturated with
sequentially minimal block subspaces. The next result identifies
block subspaces of $\X$ with the aforementioned property and also implies the sequential minimality of the whole space $\X$.

\begin{prp}
There exists a set of block sequences $\big\{\{x_k^{(Y)}\}_k: Y$\;
is a block subspace of $\X\big\}$, with $\{x_k^{(Y)}\}_k\subset Y$
for every $Y$ block subspace of $\X$, satisfying the following.
For every  block subspaces $Y, Z$ of $\X$, there exist  strictly increasing sequences of natural numbers $\{k_n\}_n,
\{m_n\}_n$, such
that $\{x_{k_n}^{(Y)}\}_n$ and $\{x_{m_n}^{(Z)}\}_n$ are
intertwined and equivalent. More precisely, there exists
 an onto isomorphism $T:\X\rightarrow \X$, such that
$Tx_{k_n}^{(Y)} = x_{m_n}^{(Z)}$ for all $n\inn$.\label{seqmin}
\end{prp}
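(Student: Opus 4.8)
The plan is to assign to each block subspace $Y$ a single block sequence $\{x_k^{(Y)}\}_k\subset Y$ that is sufficiently rich in exact vectors, and then, given two subspaces $Y$ and $Z$, to weave suitable subsequences of $\{x_k^{(Y)}\}_k$ and $\{x_k^{(Z)}\}_k$ into one dependent sequence, whose even–odd matching (Proposition \ref{exactsequenceevenodd}) supplies the required onto isomorphism. Thus the whole statement is engineered so that Corollary \ref{remarkforhi} and Proposition \ref{exactsequenceevenodd}, which already handle ``two subspaces'' and ``$x$-part versus $y$-part'', can be combined, the only new issue being that here the matched objects must be genuine subsequences of pre-assigned sequences.

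For the assignment I would first fix, for each $Y$, a normalized block sequence in $Y$ generating a $c_0$ spreading model and forming an $\alpha$-RIS, using Proposition \ref{propexistence} together with Proposition \ref{c0spreadingmodel}. On tails of this sequence the construction in the proof of Proposition \ref{exactpairexist} produces, for every prescribed weight $n$, a $(5,1,n)$ exact pair $\{x,f\}$. Diagonalizing over all weights $n\in L_1\cup L_2$ and over all starting positions, I would then arrange countably many such exact pairs $\{x_k^{(Y)},f_k^{(Y)}\}$ into one increasing block sequence $\{x_k^{(Y)}\}_k\subset Y$, so that for every $n\in L_1\cup L_2$ the sequence contains exact vectors of weight $n$ with arbitrarily large minimal support. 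This $\{x_k^{(Y)}\}_k$ is the assigned sequence.

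Given $Y$ and $Z$, I would construct by induction a dependent sequence $\{X_n,Y_n,F_n,G_n\}_n$ with each $X_n$ a term of $\{x_k^{(Y)}\}_k$ and each $Y_n$ a term of $\{x_k^{(Z)}\}_k$. Starting from a weight $N_1\in L_1$, choose $X_1$ an exact vector of weight $N_1$ from $\{x_k^{(Y)}\}_k$ with its exact-pair functional $F_1$, and then $Y_1$ of weight $N_1$ from $\{x_k^{(Z)}\}_k$ lying to the right, with functional $G_1$ so that $F_1<G_1$. At stage $n+1$ put $N_{n+1}=\sigma(F_1,G_1,\ldots,F_n,G_n)$ and select exact vectors of weight $N_{n+1}$ in both assigned sequences with minimal support beyond $\max\supp G_n$; richness of the assignment guarantees such terms exist. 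By construction $X_n<Y_n<X_{n+1}$, so the two chosen subsequences $\{x_{k_n}^{(Y)}\}_n$ and $\{x_{m_n}^{(Z)}\}_n$ are intertwined, while $\{F_n,G_n\}_n$ is a special branch precisely because the weights obey the coding relation imposed through $\sigma$. Hence $\{X_n,Y_n,F_n,G_n\}_n$ is a dependent sequence, and Proposition \ref{exactsequenceevenodd} applied to it yields a further subsequence and an onto isomorphism $T:\X\rightarrow\X$ with $Tx_{k_n}^{(Y)}=x_{m_n}^{(Z)}$, which is the conclusion.

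The main obstacle is the coordination between the pre-fixed assigned sequences and the coding function $\sigma$. Because an exact vector pairs (nearly isometrically) only with a type $\mathrm{I}_\alpha$ functional of its own weight, the weights cannot be adjusted at matching time; yet the weight $N_{n+1}$ appearing at each inductive step is rigidly determined by the functionals already chosen. The induction can therefore proceed only if each $\{x_k^{(Y)}\}_k$ already contains exact vectors of every value in the range of $\sigma$, cofinally in support. Securing this richness (via Propositions \ref{sccexistence} and \ref{exactpairexist}) and verifying that the emerging $\{F_n,G_n\}_n$ really meets the special-branch requirements --- where Lemma \ref{weightsb} is the natural tool for realizing a prescribed set of weights by an actual branch --- is the delicate part of the argument.
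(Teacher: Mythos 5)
Your proposal is correct and follows essentially the same route as the paper: assign to each block subspace a sequence of exact pairs whose weights realize every relevant value cofinally in support, then for given $Y,Z$ interleave terms of the two assigned sequences into a dependent sequence driven by the coding function $\sigma$, and conclude via Proposition \ref{exactsequenceevenodd}. The only cosmetic difference is that the paper states the richness condition for all $n\in\mathbb{N}$ rather than just $n\in L_1\cup L_2$, and your invocation of Lemma \ref{weightsb} is unnecessary since the special branch is produced directly by the induction.
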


\begin{proof}
Let $Y$ be a block subspace of $\X$. By Proposition
\ref{exactpairexist}, we may choose a block sequence $\{x_k\}_k$
in $Y$, satisfying the following.
\begin{itemize}

\item[(i)]There exists  a sequence of type I$_\al$
functionals $\{f_k\}_k$ in $W$, such that $\{x_k,f_k\}$ is a $w(f_k)$-exact
pair for all $k\inn$.

\item[(ii)] For every $n\inn$, the set $\{k\inn: w(f_k) = n\}$ is
infinite.
\end{itemize}

For every  block subspace  $Y$  of $\X$, choose $\{x_k^{(Y)}\}_k$
satisfying properties (i) and (ii).

Let now $Y,Z$ be block subspaces of $\X$. We shall recursively
choose  strictly increasing sequences of
natural numbers $\{k_n\}_n, \{m_n\}_n$ and  sequences of type
I$_\al$ functionals $\{f_n\}_n, \{g_n\}_n$, such that
$\{x_{k_n}^{(Y)},x_{m_n}^{(Z)},f_n,g_n\}_{n=1}^\infty$ is an exact
sequence.

Choose $p_1\in L_1$ and $k_1\inn, f_1\in W$ a functional of type
I$_\al$, such that $\{x_{k_1}^{(Y)}, f_1\}$ is a $p_1$ exact pair.

Similarly, choose $m_1\inn, g_1\in W$ a functional of type
I$_\al$, such that $\{x_{m_1}^{(z)}, f_1\}$ is a $p_1$ exact pair
and $\max\supp f_1 < \min\supp x_{m_1}^{(z)}$.

Suppose that we have chosen  strictly increasing sequences of natural
numbers $\{k_n\}_{n=1}^\ell,
\{m_n\}_{n=1}^\ell$ and sequences of
type I$_\al$ functionals $\{f_n\}_{n=1}^\ell, \{g_n\}_{n=1}^\ell$, such that
$\{x_{k_n}^{(Y)},x_{m_n}^{(Z)},f_n,g_n\}$ are $p_n$-exact nodes
for $k=1,\ldots,\ell$ $n_\ell$, $\{f_n,g_n\}_{n=1}^\ell$ is a
special sequence and $\max\supp g_n < \min\supp x_{n+1}^{(Y)}$ for
$k=1,\ldots,m-1$.

Set $p_{\ell+1} = \sigma(f_1,g_1,\ldots,f_\ell,g_\ell)$. Then
arguing as previously, we may choose $k_{\ell+1} > k_\ell,
m_{\ell+1}>m_\ell$ and  functionals of
type I$_\al$ $f_{\ell+1}, g_{\ell+1}$, such that
$\{x_{k_{\ell+1}}^{(Y)},x_{m_{\ell+1}}^{(Z)},f_{\ell+1},g_{\ell+1}\}$
is an $p_{\ell+1}$-exact node and $\max\supp g_\ell < \min\supp
x_{m_{\ell+1}}^{(Y)}$.

The inductive construction is complete and
$\{x_{k_n}^{(Y)},x_{m_n}^{(Z)},f_n,g_n\}_{n=1}^\infty$ is a
dependent sequence.

Proposition \ref{exactsequenceevenodd} yields the desired result.
\end{proof}

A related result to the following can be found in \cite{MP},
Proposition 2.1.

\begin{prp}
Let $1<q<\infty$, $q^\prime$ be its conjugate and set $t_j =
\lceil (4\cdot 2^{j+1})^{q^\prime}\rceil$. Then the following
holds.

If $\{m_j\}_j$ is a strictly increasing sequence of natural
numbers with $m_j \geqslant t_j$ for all $j\inn$, $\{x_m^*\}_m$ is
a block sequence in $\X^*$ and $\{x_k\}_k$ is a block sequence in
$\X$ satisfying the following,

\begin{itemize}

\item[(i)] $\{x_m^*\}_m$ is either generating an $\ell_p$
spreading model, with $p> q^\prime$, or a $c_0$ spreading model

\item[(ii)] $\{x_k\}_k$ is either generating an $\ell_r$ spreading
model with $r\geqslant q$, or a $c_0$ spreading model

\end{itemize}
then the map $T:\X\rightarrow \X$ with $Tx = \sum_{k=1}^\infty
x_{m_k}^*(x)x_k$ is bounded and non compact.

If moreover $\dim(\X/[\{x_k\}_k]) = \infty$, then $T$ is strictly
singular.\label{operatorpq}
\end{prp}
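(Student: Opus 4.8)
The plan is to prove the three assertions (boundedness, non-compactness, strict singularity) by reducing everything to the interplay between the weak-type estimate coming from the dual spreading model of $\{x_m^*\}_m$ and the upper estimate coming from the spreading model of $\{x_k\}_k$, the crucial point being the strict chain $p^\prime < q \leqslant r$ (recall that $p>q^\prime$ is precisely $p^\prime<q$). Throughout I would factor $T = J\circ R$, where $R\colon\X\to c_{00}$ is the coefficient map $Rx = (x_{m_k}^*(x))_k$ and $J$ sends $(a_k)_k$ to $\sum_k a_k x_k$. Since $\{x_k\}_k$ is a basic sequence, $J$ is an isomorphism onto $\overline{[\{x_k\}_k]}$, so all the analytic content lives in $R$. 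This result generalizes Proposition \ref{operator}, and I would follow the same scheme, now with the explicit exponents made quantitative.

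For \emph{boundedness} I would first establish a constant $M_1$ with $\sum_k|x_{m_k}^*(x)|^q\leqslant M_1^q\|x\|^q$. Fixing $\|x\|=1$ and writing $a_k=x_{m_k}^*(x)$, for indices $k_1<\cdots<k_n$ with $\{m_{k_i}\}_i$ admissible the functional $\sum_i\sgn(a_{k_i})x_{m_{k_i}}^*$ obeys the $\ell_p$ upper estimate of the spreading model, so $\sum_{i=1}^n|a_{k_i}|\leqslant C n^{1/p}$, whence the weak-$\ell_{p^\prime}$ bound $a_n^*\leqslant C n^{-1/p^\prime}$ on the decreasing rearrangement. The explicit choice $t_j=\lceil(4\cdot 2^{j+1})^{q^\prime}\rceil$ is calibrated exactly so that the index sets at each dyadic level $\{k:2^{-(j+1)}<|a_k|\leqslant 2^{-j}\}$ are $\mathcal{S}_1$-admissible, making this estimate uniform; this is where the exponent $q^\prime$ enters. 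Since $q>p^\prime$ strictly, weak-$\ell_{p^\prime}\subset\ell_q$ with finite constant, giving $\sum_k|a_k|^q\leqslant M_1^q$ (if $\{x_m^*\}_m$ has a $c_0$ spreading model the coefficients lie in $\ell_1\subset\ell_q$, which is easier). The spreading model of $\{x_k\}_k$ then yields $\|\sum_k a_k x_k\|\leqslant M_2(\sum_k|a_k|^r)^{1/r}$, and as $r\geqslant q$ we have $(\sum_k|a_k|^r)^{1/r}\leqslant(\sum_k|a_k|^q)^{1/q}$. Combining, $\|Tx\|\leqslant M_1 M_2\|x\|$.

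\emph{Non-compactness} is the easy part. Since $\{x_m^*\}_m$ is a block sequence in $\X^*$, for each $j$ I would pick $w_j\in\X$ with $\supp w_j\subset\ran x_{m_j}^*$, $\|w_j\|\leqslant 1$ and $x_{m_j}^*(w_j)\geqslant\delta$ for a fixed $\delta>0$, so that $x_{m_k}^*(w_j)=0$ for $k\neq j$ by disjointness of ranges. Then $Tw_j=x_{m_j}^*(w_j)\,x_j$ is seminormalized, while $\{w_j\}_j$ is a bounded block sequence, hence weakly null by reflexivity of $\X$. As $\{x_j\}_j$ is weakly null and seminormalized, $\{Tw_j\}_j$ admits no norm-convergent subsequence, so $T$ is not compact.

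\emph{Strict singularity} is the hard part, and it is where both the strict gap $p^\prime<r$ and the hypothesis $\dim(Y/[\{x_k\}_k])=\infty$ are used. By the factorization it suffices to show $R$ is bounded below on no infinite dimensional subspace $Z$. Using reflexivity I would extract from $Z$ a normalized weakly null sequence which, up to a small perturbation, is a block sequence $\{z_n\}_n$ generating a spreading model; by a gliding hump argument the vectors $Rz_n=(x_{m_k}^*(z_n))_k$ are essentially disjointly supported with $\ell_q$-norm bounded by $M_1$. For normalized combinations $w=\sum_n b_n z_n$ the disjointness gives $\|Tw\|\lesssim M_1M_2(\sum_n|b_n|^r)^{1/r}$, to be compared with a lower bound for $\|w\|$ from the spreading model of $\{z_n\}_n$; the strict inequality $p^\prime<r$ then drives the ratio $\|Tw\|/\|w\|$ to zero along a suitable choice of $\{b_n\}_n$. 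This is exactly the mechanism of Proposition~3.1 of \cite{ADT} and Proposition~2.1 of \cite{MP}, which I would quote once the two spreading-model estimates are in place, the codimension hypothesis guaranteeing that the blocks $\{z_n\}_n$ can be selected so that the comparison is available. I expect the main obstacle to be precisely this last step, together with the uniform admissibility bookkeeping behind the explicit form of $t_j$ in the boundedness estimate.
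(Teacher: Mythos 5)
There is a genuine gap in your boundedness argument, at the step where you pass from the coefficient estimate to the norm of the output. You claim that the spreading model of $\{x_k\}_k$ yields a global upper estimate $\|\sum_k a_kx_k\|\leqslant M_2(\sum_k|a_k|^r)^{1/r}$ for arbitrary scalars. This does not follow: a spreading model only controls admissible finite subfamilies (roughly, $\{x_{k_i}\}_{i=1}^n$ with $n\leqslant k_1$), and a sequence generating an $\ell_r$ or $c_0$ spreading model need not be dominated by the corresponding unit vector basis. In the $c_0$ case of hypothesis (ii) the claimed estimate is outright false, since $\X$ is reflexive and hence contains no sequence equivalent to the $c_0$ basis; yet that case must be covered. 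A similar (though patchable) admissibility issue affects your weak-$\ell_{p'}$ bound on the decreasing rearrangement of $(x_{m_k}^*(x))_k$, since the $\ell_p$ upper estimate for $\{x_{m_k}^*\}$ is again only available on admissible families. More importantly, you have the roles of the two spreading models, and hence of $t_j$, reversed. In the paper's proof one never sums $\sum_k a_kx_k$ directly: one fixes a norming functional $x^*$ of $Tx$, decomposes $\mathbb{N}$ into the dyadic level sets $B_j=\{k: 2^{-(j+1)}<|x^*(x_k)|\leqslant 2^{-j}\}$ of the \emph{output} pairings, bounds $\#B_j\leqslant t_j$ using the $\ell_q$ upper estimate on admissible subsets of $\{x_k\}_k$ (this is where the exponent $q'$ in $t_j$ comes from, not from any admissibility of coefficient level sets for $\{x_m^*\}_m$), and only then applies the $\ell_p$ estimate to $\{x^*_{m_k}\}_{k\in C_j}$ with $C_j=\{k\in B_j: k\geqslant j\}$ — a family which is admissible precisely because $m_k\geqslant t_k\geqslant t_j\geqslant\#C_j$. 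That is the actual role of the hypothesis $m_j\geqslant t_j$, and the summability of $\sum_j t_j^{1/p}/2^j$, guaranteed by $p>q'$, closes the estimate.

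Your non-compactness argument is fine and matches the paper's (which uses the biorthogonals of $\{x_{m_k}^*\}_k$). For strict singularity, however, your proposed direct argument — extracting blocks from an arbitrary subspace and driving $\|Tw\|/\|w\|$ to zero via the gap $p'<r$ — is not carried out and would be delicate to execute, given the richness of spreading models in $\X$; it is also not how the codimension hypothesis is actually used. The paper's argument is soft: if $T$ were not strictly singular then, by Proposition \ref{scalarplusss}, $T-\la I$ would be strictly singular for some $\la\neq 0$, making $T$ Fredholm and hence $T[\X]$ of finite codimension, which contradicts $T[\X]\subset[\{x_k\}_k]$ together with $\dim(\X/[\{x_k\}_k])=\infty$. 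You should replace your sketch with this argument (or fully justify the block-selection and comparison you allude to).
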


\begin{proof}
If $\{x_m^*\}_m$ generates a $c_0$ spreading model, fix $q^\prime
< p < \infty$. Note that by the choice of $t_j$, we have that
\begin{eqnarray*}
\frac{t_j^{1/p}}{2^j}&\leqslant& \frac{\big((4\cdot
2^{j+1})^{q^\prime}+1\big)^{1/p}}{2^j} \leqslant \frac{(4\cdot
2^{j+1})^{q^\prime/p}}{2^j} + \frac{1}{2^j}\\
&=& 8^{q^\prime/p}\frac{1}{(2^{1-q^\prime/p})^j} + \frac{1}{2^j}
\end{eqnarray*}
Since $p>q^\prime$, we have that
$\sum_{j=1}^\infty\frac{1}{(2^{1-q^\prime/p})^j}<\infty$. We
conclude that if we set
\begin{equation*}
\al =
8^{q^\prime/p}\sum_{j=1}^\infty\frac{1}{(2^{1-q^\prime/p})^j} + 1
\end{equation*}
Then
\begin{equation}
\sum_{j=1}^\infty\frac{t_j^{1/p}}{2^j} \leqslant
\al\label{operatoreq1}
\end{equation}

Fix $C>0$ such that for any  natural
numbers $n\leqslant m_1 < \cdots <m_n$ and  real numbers $c_1,\ldots,c_m$ the following holds.
\begin{equation}
\|\sum_{i=1}^n c_ix_{m_i}^*\| \leqslant
C(\sum_{i=1}^n|c_i|^p)^{1/p}\label{operatoreq2}
\end{equation}

By multiplying the $x_k$ with an appropriate scalar, we may assume
that $\|x_k\| \leqslant 1/2$ for all $k\inn$ and that for any
 natural numbers $n\leqslant m_1 < \cdots <m_n$ and
 real numbers $c_1,\ldots,c_m$ the following holds.
\begin{equation}
\|\sum_{i=1}^n c_ix_{m_i}\| \leqslant
(\sum_{i=1}^n|c_i|^q)^{1/q}\label{operatoreq3}
\end{equation}

Let $x\in X, \|x\| = 1$, $x^*\in Y^*, \|x^*\| = 1$. For $j\inn$,
set
\begin{equation*}
B_j = \{k\inn: \frac{1}{2^{j+1}}<|x^*(x_k)|\leqslant
\frac{1}{2^j}\}
\end{equation*}

Then $\{B_j\}_j$ is a partition of the natural numbers and
\begin{equation}
|x^*(Tx)| \leqslant \sum_{j=1}^\infty|\sum_{k\in
B_j}x^*(x)x^*_{m_k}(x)|\label{operatoreq4}
\end{equation}
We will show that $\#B_j \leqslant t_j$.

Assume that this is not the case. Then we may choose $F\subset
B_j$ with $\#F > t_j/2$ and $\#F\leqslant \min F$.

Set
\begin{eqnarray*}
F_1 &=& \{k\in B_j: x^*(x_k) \geqslant 0\}\\
F_2 &=& \{k\in B_j: x^*(x_k) < 0\}
\end{eqnarray*}
Then either $\#F_1 > t_j/4$, or $\#F_2 > t_j/4$ and we shall
assume the first. Choose $G\subset F_1$ with $\#G = \lceil
t_j/4\rceil$.

Then, by \eqref{operatoreq3} and the choice of $G$, we have the
following.

\begin{equation*}
t_j^{1/q} \geqslant \|\sum_{k\in G}x_k\| \geqslant x^*(\sum_{k\in
G}x_k)
> \frac{t_j}{4\cdot2^{j+1}}
\end{equation*}
We conclude that $t_j < (4\cdot 2^{j+1})^{q^\prime}$, which
contradicts the choice of $t_j$.

Set
\begin{equation*}
C_j = \{k\in B_j: k\geqslant j\},\quad D_j = B_j\setminus C_j
\end{equation*}
Evidently $\#D_j\leqslant j-1$, hence
\begin{equation}
|\sum_{k\in D_j}x^*(x_k)x^*_{m_k}(x)| \leqslant
\frac{j-1}{2^j}\label{operatoreq5}
\end{equation}
Moreover,
\begin{equation*}
\#\{m_k: k\in C_j\}\leqslant t_j \leqslant \min\{t_k: k\in C_j\}
\leqslant \min\{m_k: k\in C_j\}
\end{equation*}
Therefore, using \eqref{operatoreq2} and the definition of $C_j$,

\begin{eqnarray*}
|\sum_{k\in C_j}x^*(x_k)x^*_{m_k}(x)| &\leqslant& \|\sum_{k\in
C_j}x^*(x_k)x^*_{m_k}\|\\
 &\leqslant& C(\sum_{k\in C_j}|x^*(x_k)|^p)^{1/p} \leqslant
 \frac{C\cdot t_j^{1/p}}{2^j}
\end{eqnarray*}

The above, combined with \eqref{operatoreq1}, \eqref{operatoreq4}
and \eqref{operatoreq5} yields the following.
\begin{eqnarray*}
|x^*(Tx)| &\leqslant& \sum_{j=1}^\infty|\sum_{k\in
B_j}x^*(x)x^*_{m_k}(x)|\\
&\leqslant& \sum_{j=1}^\infty|\sum_{k\in C_j}x^*(x_k)x^*_{m_k}(x)|
+ \sum_{j=1}^\infty|\sum_{k\in D_j}x^*(x_k)x^*_{m_k}(x)|\\
&\leqslant& C\sum_{j=1}^\infty\frac{t_j^{1/p}}{2^j} +
\sum_{j=1}^\infty\frac{j-1}{2^j}\\
&\leqslant& C\cdot\al + 1
\end{eqnarray*}

We conclude that $\|T\| \leqslant C\cdot\al + 1$. The non
compactness of $T$ follows easily, if we consider $\{z_k\}_k$ the
biorthogonals of $\{x_{m_k}^*\}_k$. Then $\{z_k\}_k$ is
seminormalized and $\{Tz_k\}_k = \{x_k\}_k$, therefore it is not
norm convergent.

We now prove that $T$ is strictly singular. Suppose that it is
not, then by Proposition \ref{scalarplusss}, there exists $\la\neq
0$ such that $Q = T - \la I$ is strictly singular. Since $\la I$
is a Fredholm operator and $Q$ is strictly singular, it follows
that $T = Q + \la I$ is also a Fredholm operator, therefore
$\dim(\X/T[\X]) < \infty$. The fact that $T[\X] \subset
[\{x_k\}_k]$ and $\dim(\X/[\{x_k\}_k]) = \infty$ yields a
contradiction.

\end{proof}

\begin{prp}
There exists $S:\X\rightarrow \X$ a strictly singular operator
which is not polynomially compact.\label{ssnonpolynomiallycompact}
\end{prp}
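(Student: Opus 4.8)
The plan is to reduce the statement to a condition on the powers of $S$ and then to build $S$ as an infinite ``staircase'' glued out of the operators produced by Proposition~\ref{operatorpq}.

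First I would record the elementary reduction. A strictly singular $S$ on $\X$ fails to be polynomially compact precisely when $S^m$ is non-compact for every $m\inn$. Indeed, by Proposition~\ref{scalarplusss} every operator on $\X$ has the form $\la I+(\text{strictly singular})$, so for $\la\neq 0$ the operator $\la I+S$ is Fredholm of index $0$ (strictly singular operators are inessential on $\X$). Given a nonzero polynomial $p$, factor $p(t)=c\,t^{n_0}\prod_i(t-\la_i)^{n_i}$ with all $\la_i\neq 0$; each factor $S-\la_iI$ is then Fredholm, so $p(S)$ is compact only if $S^{n_0}$ is. Hence it suffices to construct a strictly singular $S$ with $S^m$ non-compact for all $m\inn$. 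The naive idea of making $S$ a single coordinate shift fails: for iteration one needs the reading functionals to be biorthogonal to the written vectors, which forces their spreading models to be conjugate and then falls outside the scope of Propositions~\ref{operator} and~\ref{operatorpq}. The staircase is designed to circumvent exactly this obstruction.

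Next I would fix a strictly decreasing sequence of exponents $2<\cdots<p_2<p_1$ with $p_j\downarrow p_\infty>1$ (e.g.\ $p_j=2+1/j$) and, using the hereditary unconditional spreading model universality (Proposition~\ref{spreadingmodeluniversal}), construct recursively, in successive block subspaces, biorthogonal block systems $\{z^{(j)}_k\}_k\subset\X$, $\{z^{(j)*}_k\}_k\subset\X^*$ such that $\{z^{(j)}_k\}_k$ generates an $\ell_{p_j}$ spreading model and $\{z^{(j)*}_k\}_k$ an $\ell_{p_j'}$ spreading model, with the levels disjointly (successively) supported and cross-orthogonal, $z^{(j)*}_k(z^{(i)}_n)=\de_{j,i}\de_{k,n}$. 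For each $j\geqslant 2$ I would apply Proposition~\ref{operatorpq} with $q=p_{j-1}$: taking the functionals $\{z^{(j)*}_{m^{(j)}_k}\}_k$ (a rapidly increasing subsequence satisfying the growth condition $m^{(j)}_k\geqslant t_k$) as the $\ell_p$-side with $p=p_j'>q'=p_{j-1}'$ (valid since $p_j<p_{j-1}$), and the vectors $\{z^{(j-1)}_k\}_k$ as the $\ell_r$-side with $r=p_{j-1}=q$, one obtains a bounded, non-compact, strictly singular operator $S_j$ (its range $[\{z^{(j-1)}_k\}_k]$ has infinite codimension), which by biorthogonality within level $j$ satisfies $S_j z^{(j)}_{m^{(j)}_k}=z^{(j-1)}_k$ and kills every $z^{(i)}_n$ with $i\neq j$. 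I would then set $S=\sum_{j\geqslant 2}\e_j S_j$ for suitable scalars $\e_j>0$. Since $S$ carries level $j$ into level $j-1$ and the reindexings $m^{(j)}$ can be chosen nested so that outputs land in the active index set of the next step, the powers telescope: for each fixed $m$ there is a seminormalized sequence in level $m+1$ that $S^m$ maps, up to the fixed factor $\e_2\cdots\e_{m+1}$, onto the seminormalized, non-norm-convergent sequence $\{z^{(1)}_k\}_k$. Thus $S^m$ is non-compact for every $m$, while strict singularity of $S$ follows from closedness of the strictly singular ideal, the partial sums being finite sums of strictly singular operators.

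The main obstacle is the simultaneous control of boundedness of the glued operator and non-compactness of \emph{every} power. The constants furnished by Proposition~\ref{operatorpq} blow up as $p_j\downarrow p_\infty$ (the quantity $\al$ there diverges because $q'/p=p_{j-1}'/p_j'\to 1$), so the levels cannot be summed with unit weights. I would resolve this by choosing the weights $\e_j\to 0$ fast enough to absorb the growth of $\|S_j\|$, and by separating the supports of the levels widely so that the Tsirelson-type upper estimates of Section~2 (Proposition~\ref{tsirelsondominated} and Corollary~\ref{sccnormestimate}) dominate the block sum of the images $\{S_j x^{(j)}\}_j$ and force $\|S\|<\infty$; this also makes the tails $\|\sum_{j>N}\e_j S_j\|\to 0$, securing strict singularity. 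Crucially, non-compactness of $S^m$ is witnessed by the \emph{fixed finite} chain through levels $1,\dots,m+1$, whose combined constant $\e_2\cdots\e_{m+1}$ is a fixed positive number, so the vanishing of the weights does not destroy it. Balancing these two requirements—summable-enough weights for boundedness against bounded-below finite chains for non-compactness of each power—is the delicate point of the argument.
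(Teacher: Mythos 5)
Your proposal is correct and follows essentially the same route as the paper: a staircase of operators $S_j$ obtained from Proposition \ref{operatorpq} between biorthogonal, disjointly supported levels carrying distinct $\ell_{p_j}$ spreading models (furnished by Proposition \ref{spreadingmodeluniversal}), summed with weights of order $2^{-j}/\|S_j\|$ so that boundedness and strict singularity follow while the fixed finite chain through the first $m+1$ levels witnesses non-compactness of $S^m$. The only real divergence is in handling general polynomials: you reduce to monomials via a Fredholm factorization, whereas the paper bounds $\|p(S)x^{1}_{m_k}\|$ from below directly, using that the images at different levels are disjointly ranged and the basis is bimonotone.
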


\begin{proof}
Choose  a strictly increasing sequence of real numbers $\{p_n\}_n$,
with $p_1 > 2$ and let $p_n^\prime$ be the conjugate of $p_n$ for
all $n\inn$.

By Proposition \ref{spreadingmodeluniversal}, for every $n\inn$
there exist  a seminormalized block sequence $\{x_k^n\}_k$ in $\X$,
with $\|x_k^n\|\geqslant 1$ for all $k,n\inn$ and 
a seminormalized block sequence $\{x_k^{n*}\}_k$ in $\X^*$, satisfying the
following.
\begin{itemize}

\item[(i)] $x^{n*}_k(x_m^n) = \de_{k,m}$

\item[(ii)] $\{x_k^n\}_k$ generates an $\ell_{p_n}$ spreading
model and $\{x_k^{n*}\}_k$ generates an $\ell_{p_n^\prime}$
spreading model.

\end{itemize}

If we set $E_k^n = \ran(\ran x_k^n\cup\ran x_k^{n*})$, using a
diagonal argument we may assume that the intervals
$\{E_k^n\}_{k,n}$ are pairwise disjoint.

Set $m_k = \lceil(4\cdot 2^{k+1})^2\rceil$ and
$S_n:\X\rightarrow\X$ with
\begin{equation*}
S_nx = \sum_{k=1}^\infty x_{m_k}^{n*}(x)x_{m_k}^{n+1}
\end{equation*}
Proposition \ref{operatorpq} (for $q=p_{n+1}$), yields that $S_n$
is bounded and strictly singular. Moreover the following holds.
\begin{itemize}

\item[(a)]For every $k,n\inn$, $S_nx_{m_k}^n = x_{m_k}^{n+1}$

\item[(b)] For every $n\neq l\inn$ and $k\inn$, $S_nx_{m_k}^l =
0$.

\end{itemize}

Set $S = \sum_{n=1}^\infty\frac{1}{2^n\|S_n\|}S_n$. Then $S$ is
strictly singular and we shall prove that that it is not
polynomially compact.

Properties (a) and (b), yield that for every $k,n\inn$ we have
that $Sx_{m_k}^n = \frac{1}{2^n\|S_n\|}x_{m_k}^{n+1}$.

Using an easy induction we conclude the following.
\begin{equation}
S^nx_{m_k}^1 =
\big(\prod_{j=1}^n\frac{1}{2^j\|S_j\|}\big)x_{m_k}^{n+1},\quad\text{for
every}\;k,n\inn\label{ssnonpolynomiallycompactequation1}
\end{equation}

Set $a_n = \prod_{j=1}^n\frac{1}{2^j\|S_j\|}$ for $n\inn$ and $a_0
= 1$.

Let now $T = \sum_{n=0}^db_nS^n$ be a non zero polynomial of $S$.
Then, using \eqref{ssnonpolynomiallycompactequation1}, for every
$k\inn$, we have that
\begin{equation*}
Tx^1_{m_k} = \sum_{n=0}^db_na_nx_{m_k}^{n+1}
\end{equation*}
The fact that the basis of $\X$ is bimonotone, the
$x_{m_k}^1,\ldots,x_{m_k}^{d+1}$ are disjointly ranged and
$\|x_{m_k}^n\|\geqslant 1$, for all $k,n\inn$, yields that
$\|Tx^1_{m_k}\| \geqslant \max\{|a_nb_n|: n=0,\ldots,d\}$, for all
$k\inn$. We conclude that $\{Tx^1_{m_k}\}_k$ has no norm
convergent subsequence, therefore $T$ is not compact.
\end{proof}

\begin{rmk}
A slight modification of the above yields that in every block
subspace of $\X$ there exists a strictly singular operator which
is not polynomially compact.
\end{rmk}

We close the paper with the following two problems, which are open
to us.

\begin{prb}
Does there exist a reflexive Banach space with an unconditional
basis, which is hereditarily unconditional spreading model
universal?
\end{prb}

Although it does not seem necessary to use conditional structure
in order to construct a hereditarily unconditional spreading model
universal space, in our approach the conditional structure of the
type II$_+$ functionals cannot be avoided, resulting in an HI
space.

\begin{prb}
Does there exist a Banach space hereditarily spreading model
universal, for both conditional and unconditional spreading
sequences?
\end{prb}

\end{document}